\numberwithin{equation}{section}
\theoremstyle{plain}
\newtheorem{thm}{Theorem}[section]
\newtheorem{prop}[thm]{Proposition}
\newtheorem{cor}[thm]{Corollary}
\newtheorem{lem}[thm]{Lemma}
\newtheorem*{thms}{Theorem}
\newtheorem*{thmI}{Theorem I}
\newtheorem*{thmII}{Theorem II}
\newtheorem*{thmIII}{Theorem III}
\theoremstyle{definition}
\newtheorem{defi}[thm]{Definition}
\newtheorem{example}[thm]{Example}
\newtheorem{rem}[thm]{Remark}
\newtheorem*{rem*}{Remark}
\theoremstyle{remark}
\begin{document}

\title[Path group actions induced by sigma-actions]{Path group actions induced by sigma-actions and affine Kac-Moody symmetric spaces of group type}
\author[M. Morimoto]{Masahiro Morimoto}

\address{Osaka Central Advanced Mathematical Institute, Osaka Metropolitan University, 3-3-138 Sugimoto, Sumiyoshi-ku, Osaka, 558-8585, Japan}

\email{mmasahiro0408@gmail.com}

\thanks{The author was partly supported by the Grant-in-Aid for Research Activity Start-up (No.\ 20K22309) and  by Osaka Central Advanced Mathematical Institute (MEXT Joint Usage/Research Center on Mathematics and Theoretical Physics JPMXP06192178499), Osaka Metropolitan University. }

\makeatletter
\@namedef{subjclassname@2020}{%
  \textup{2020} Mathematics Subject Classification}
\makeatother

\keywords{
	sigma-action,  % hyperpolar action, 
	principal curvature, austere submanifold, % weakly reflective submanifold, 
	proper Fredholm action, proper Fredholm submanifold,
	%affine Kac-Moody group, 
affine Kac-Moody symmetric space}

\subjclass[2020]{53C40, 53C42}

\maketitle

\begin{abstract}
In 1995, C.-L.\ Terng associated to each hyperpolar action on a compact symmetric space, a hyperpolar proper Fredholm (PF) action on a Hilbert space. This is a group action by an infinite dimensional path group and it acts on a Hilbert space via the gauge transformations. Those two hyperpolar actions are related through an equivariant Riemannian submersion called the parallel transport map and they have close relations to the infinite dimensional symmetric spaces called affine Kac-Moody symmetric spaces. In this paper we define a linear isomorphism between Hilbert spaces and show that it is equivariant with respect to the gauge transformations and is compatible with the parallel transport map. Using this isomorphism we extend and unify all known computational results of principal curvatures of PF submanifolds in Hilbert spaces. Especially we study the submanifold geometry of orbits of hyperpolar PF actions associated to sigma-actions and give new examples of austere PF submanifolds in Hilbert spaces. Moreover we show that the isomorphism between Hilbert spaces given here corresponds to a natural isomorphism between affine Kac-Moody symmetric spaces of group type.
\end{abstract}

\section{Introduction}

An isometric action of a compact Lie group on a Riemannian manifold $M$ is called \emph{polar} if there exists a closed connected submanifold $\Sigma$ of $M$ which meets every orbit and is orthogonal to the orbits at every point of intersection. Such a $\Sigma$ is called a \emph{section}, that is automatically totally geodesic in $M$. If $\Sigma$ is also flat in the induced metric then the action is called \emph{hyperpolar} (\cite{HPTT95}).

If $M$ is a Euclidean space then hyperpolar actions are essentially the isotropy representations of symmetric spaces. More precisely, let $U/L$ be a symmetric space of compact type with canonical decomposition $\mathfrak{u} = \mathfrak{l} + \mathfrak{p}$. The adjoint representation of $L$ on $\mathfrak{p}$ is called the \emph{isotropy representation} of $U/L$. It follows that the isotropy representation is hyperpolar where any maximal abelian subspace in $\mathfrak{p}$ is a section. Conversely it was shown that any hyperpolar representation on a Euclidean space is orbit equivalent to the isotropy representation of a symmetric space (\cite{Dad85, EH99}). Here two isometric actions are called \emph{orbit equivalent} if their orbits are identified via a suitable isometry.

If $M = G/K$ is a symmetric spaces of compact type, important examples of hyperpolar actions are \emph{Hermann actions}, that is, the actions by symmetric subgroups of $G$ (\cite{Her60, Her62}). Here a closed subgroup $H$ of $G$ is called \emph{symmetric} if there exists an involutive automorphism $\theta$ of $G$ which satisfies $G^\theta_0 \subset H \subset G^\theta$, where $G^\theta$ denotes the fixed point subgroup and $G^\theta_0$ its identity component. It was shown that any indecomposable hyperpolar actions of cohomogeneity grater than one is orbit equivalent to a Hermann action (\cite{Kol02, Kol17}). Note that the associated action of $H \times K$ on $G$ defined by $(b,c) \cdot a = bac^{-1}$ is also hyperpolar (\cite{HPTT95}).

A special class of Hermann actions is given by \emph{sigma-actions} (\cite{Con64}). Let $G$ be a connected compact semisimple Lie group and $\sigma$ an automorphism of $G$. Then $G(\sigma) := \{(b, \sigma(b)) \mid b \in G\}$ is a symmetric subgroup of $G \times G$ with involution $(b,c) \mapsto (\sigma^{-1}(c), \sigma(b))$. The $G(\sigma)$-action on $G$ defined by $(b, \sigma(b)) \cdot a = ba \sigma(b)^{-1}$ is called a $\sigma$-action. This can be regarded as a Hermann action by identifying $G$ with the symmetric space $(G \times G) / \Delta G$ where $\Delta G$ denotes the diagonal. The $G(\sigma)$-orbit through the identity is also called the \emph{Cartan embedding} associated to $(G, \sigma)$.

It is an interesting problem to study the submanifold geometry of orbits of hyperpolar actions. The principal orbits of polar representations are isoparametric submanifolds in the sense of Terng \cite{Ter85}. Thorbergsson \cite{Tho91} conversely showed that an irreducible compact full isoparametric submanifold of a Euclidean space with codimension at least $3$ is an orbit of a polar representation. These results were extended to the case of hyperpolar actions on compact symmetric spaces and equifocal submanifolds of symmetric spaces (\cite{TT95, Chr02}). Hirohashi, Song, Takagi and Tasaki \cite{HSTT00} studied the submanifold geometry of orbits of the isotropy representations of symmetric spaces and showed that in each strata of the stratification of orbit types there exists a unique orbit which is a minimal submanifold of the standard sphere. Ikawa \cite{Ika11} extended this result to the case of Hermann actions with commuting involutions. Many geometers have studied orbits of hyperpolar actions and shown various kinds of examples of homogeneous submanifolds.

It is also interesting to study hyperpolar actions in infinite dimensions.

Palais and Terng \cite{PT88, Ter89} introduced a suitable class of isometric actions on Hilbert spaces, namely \emph{proper Fredholm} (PF) actions, and showed examples of hyperpolar PF actions which are orbits of the gauge transformations. These examples were later extended by Pinkall and Thorbergsson \cite{PiTh90} and reformulated by Terng \cite{Ter95} as follows. Let $G$ be a connected compact Lie group with a bi-invariant metric. Denote by $\mathcal{G} = H^1([0,1], G)$ the path group of all Sobolev $H^1$-paths from $[0,1]$ to $G$ and by $V_\mathfrak{g} = H^0([0,1], \mathfrak{g})$ the Hilbert space of all $H^0$-paths from $[0,1]$ to the Lie algebra $\mathfrak{g}$ of $G$. Let $\mathcal{G}$ act on $V_\mathfrak{g}$ by the affine isometry:
\begin{equation*}
g * u = gug^{-1} - g' g^{-1},
\end{equation*}
where $g \in \mathcal{G}$ and $u \in V_\mathfrak{g}$. For any closed subgroup $L$ of $G \times G$ the subgroup
\begin{equation*}
P(G, L) = \{g \in \mathcal{G} \mid (g(0), g(1)) \in L\}
\end{equation*}
acts on $V_\mathfrak{g}$ by the same formula. It was shown that the $P(G, L)$-action is PF and that if the $L$-action on $G$ defined by $(b,c) \cdot a = bac^{-1}$ is hyperpolar then the $P(G,L)$-action on $V_\mathfrak{g}$ is also hyperpolar. Applying this result to the examples of hyperpolar actions on $G$ she showed that $P(G, H \times K)$-actions and $P(G, G(\sigma))$-actions associated to Hermann actions and $\sigma$-actions respectively are hyperpolar. (Palais and Terng \cite{PT88, Ter89} considered the case $\sigma = \operatorname{id}$. Pinkall and Thorbergsson \cite{PiTh90} considered the case $H = K$.) Note that all orbits of $P(G, L)$-actions are \emph{proper Fredholm} (PF) submanifolds of the Hilbert space $V_\mathfrak{g}$ (\cite{PT88, Ter89}). There are many interesting examples of PF submanifolds which are orbits of hyperpolar $P(G, L)$-actions (see, for example \cite{Ter89, Ter95, TT95, M2, M1}).

It should be also noted that there is a class of infinite dimensional symmetric spaces closely related to hyperpolar PF actions. Recall that in the finite dimensional case hyperpolar representations are essentially the isotropy representations of symmetric spaces. Terng \cite{Ter95} conjectured that there is an analogous result in infinite dimensions and showed that $P(G, G(\sigma))$-actions are essentially the adjoint actions of affine Kac-Moody groups. Later Heintze, Palais, Terng and Thorbergsson \cite{HPTT95} studied involutions of affine Kac-Moody algebras and showed that $P(G, H \times K)$-actions associated to Hermann actions are essentially the isotropy representations of infinite dimensional symmetric spaces induced by  those involutions. However they did not give a precise definition of those symmetric spaces due to functional analytic difficulties inherent in affine Kac-Moody groups. Afterward Heintze and Popescu \cite{Hei06, Pop05} started to study those symmetric spaces in the category of tame Fr\'echet manifolds (\cite{Ham82}) and showed their fundamental properties. Nowadays they are called \emph{affine Kac-Moody symmetric spaces} and known as the closest infinite dimensional analogue of finite dimensional symmetric spaces (\cite{Fre11}). According to the theory of those symmetric spaces their isotropy representations (restricted to appropriate subspaces) are essentially equivalent to $P(G, H \times K)$-actions associated to Hermann actions. In particular those of group type are affine Kac-Moody groups and their isotropy representations are essentially equivalent to $P(G, G(\sigma))$-actions.

In the study of $P(G, L)$-actions in general, it is important to consider an equivariant Riemannian submersion $\Phi: V_\mathfrak{g} \rightarrow G$, called the \emph{parallel transport map}. It was shown that each orbit of the $P(G, L)$-action is the inverse image of an $L$-orbit under $\Phi$. More generally, if $N$ is a closed submanifold of $G$ then its inverse image $\Phi^{-1}(N)$ is a PF submanifold of $V_\mathfrak{g}$. For a given compact symmetric space $G/K$ with projection $\pi : G \rightarrow G/K$ we consider the composition $\Phi_K := \pi \circ \Phi : V_\mathfrak{g} \rightarrow G \rightarrow G/K$ which is also an equivariant Riemannian submersion called the parallel transport map over $G/K$. Similarly, if $N$ is a closed submanifold of $G/K$ then its inverse image $\Phi_K^{-1}(N)$ is a PF submanifold of $V_\mathfrak{g}$. In particular, if $N$ is an orbit of the $H$-action then $\Phi_K^{-1}(N)$ is an orbit of the $P(G, H \times K)$-action.  The parallel transport map $\Phi_K$ is also known as a useful tool to study the submanifold geometry in symmetric spaces (\cite{TT95}).

In \cite{M4} the author gave a formula for the principal curvatures of the PF submanifold $\Phi_K^{-1}(N)$ for a curvature-adapted submanifold $N$ of $G/K$ (see also \cite{Koi02, M2}) and showed an explicit formula for the principal curvatures of orbits of $P(G, H \times K)$-actions associated to Hermann actions. Using this formula he studied conditions for those orbits to be austere PF submanifolds of $V_\mathfrak{g}$. Here a submanifold is called \emph{austere} (\cite{HL82}) if for each normal vector $\xi$ the set of  principal curvatures with multiplicities in the direction of $\xi$ is invariant under the multiplication by $(-1)$. Thus austere submanifolds are minimal submanifolds. He considered two conditions: 
\begin{enumerate}
\item[(A)] The orbit $N = H \cdot (\exp w) K$ is an austere submanifold of $G/K$.
\item[(B)] The orbit $\Phi_K^{-1} (N) = P(G, H \times K) * \hat{w}$ is an austere PF submanifold of $V_\mathfrak{g}$.
\end{enumerate}
Here $\hat{w}$ denotes the constant path with value $w \in \mathfrak{g}$. Let $\theta_K$ and $\theta_H$ denote the involutions of $G$ associated to the symmetric subgroups $K$ and $H$ respectively. Denote by $\mathfrak{g} = \mathfrak{k} + \mathfrak{m}$ and $\mathfrak{g} = \mathfrak{h} + \mathfrak{p}$ the corresponding canonical decompositions. Take a maximal abelian subspace $\mathfrak{t}$ in $\mathfrak{m} \cap \mathfrak{p}$ and write $\Delta(\theta_K, \theta_H)$ for the corresponding  root system of $\mathfrak{t}$. He showed:
\begin{thms}[{\cite{M4}}] \  
\begin{enumerate}
\item If $\Delta(\theta_K, \theta_H)$ is a reduced root system then \textup{(A)} and \textup{(B)} are equivalent.
\item If $\theta_K = \theta_H$ then \textup{(A)} and \textup{(B)} are equivalent.
\item If $\theta_K \circ \theta_H = \theta_H \circ \theta_K$ then \textup{(A)} implies \textup{(B)}.
\item If $G$ is simple then \textup{(A)} implies \textup{(B)}.
\end{enumerate}
Here \textup{(B)} does not imply \textup{(A)} in the cases \textup{(iii)} and \textup{(iv)}. In fact, there exists a counterexample.
\end{thms}
\noindent
Applying these results to the examples of austere orbits of Hermann actions he showed many examples of austere PF submanifolds which are orbits of hyperpolar $P(G, H \times K)$-actions.

The main purpose of this paper is to extend those results to the case of $P(G, G(\sigma))$-actions. Notice that although the $\sigma$-action is a special case of a Hermann action, we can \emph{not} apply the previous results directly to the present case because $G(\sigma)$ is not the product of two symmetric subgroups of $G$. In this paper we introduce an injective homomorphism $\Omega: H^1([0,1], G) \rightarrow H^1([0,1], G \times G)$ and a linear isomorphism $\Upsilon: H^0([0,1], \mathfrak{g}) \rightarrow H^0([0,1], \mathfrak{g} \oplus \mathfrak{g})$, and show (Theorem \ref{thm1} and Corollary \ref{cor3.1}): 
\begin{thmI} \ \
\begin{enumerate}
\item The $P(G, L)$-action on $V_\mathfrak{g}$ is conjugate to the $P(G \times G, L \times \Delta G)$-action on $V_{\mathfrak{g} \oplus \mathfrak{g}}$ via $\Omega$ and $\Upsilon$, that is, $\Omega$ maps $P(G, L)$ isomorphically onto $P(G \times G, L \times \Delta G)$ and $\Upsilon(g * u ) = \Omega(g) * \Upsilon(u)$ holds for $g \in P(G, L)$ and $u \in V_\mathfrak{g}$. 
In particular the $P(G, G(\sigma))$-action on $V_\mathfrak{g}$ is conjugate to the $P(G \times G, G(\sigma) \times \Delta G)$-action on $V_{\mathfrak{g} \oplus \mathfrak{g}}$ via $\Omega$ and $\Upsilon$.

\item The following diagram commutes:
\begin{equation*}
\begin{CD}
V_\mathfrak{g} @>\Upsilon>> V_{\mathfrak{g} \oplus \mathfrak{g}}
\\
@V\Phi VV @V \Phi_{\Delta G} VV
\\
G @<\phi<< (G \times G) /\Delta G\,,
\end{CD}
\end{equation*}
where $\Phi_{\Delta G}: V_{\mathfrak{g}\oplus \mathfrak{g}} \rightarrow G \times G \rightarrow (G \times G) /\Delta G$ denotes the parallel transport map over $(G \times G) /\Delta G$ and $\phi$ the isomorphism $(a,b) \mapsto ab^{-1}$.
\end{enumerate}
\end{thmI}
\noindent
The property (i) allows us to apply the general results of $P(G, H \times K)$-actions to $P(G, G(\sigma))$-actions. Note that the case $\sigma = \operatorname{id}$ was essentially observed by Pinkall and Thorbergsson \cite[p.\ 283]{PiTh90}. The property (ii) means that $\Upsilon$ is natural in the framework of parallel transport maps. It allows us to apply the general results of $\Phi_K$ to $\Phi$.

Using Theorem I we derive a formula for the principal curvatures of the PF submanifold $\Phi^{-1}(N)$ for a curvature-adapted submanifold $N$ of $G$ (Theorem \ref{thm2}) and a formula for the principal curvatures of orbits of $P(G, G(\sigma))$-actions (Theorem \ref{thm3}).  These formulas generalize the results by King-Terng \cite{KT93} in the case of fibers and by Palais-Terng \cite{PT88, Ter89} in the case of $\sigma = \operatorname{id}$. Consequently we unify all known computational results of principal curvatures of PF submanifolds as follows:
\begin{equation*}
\begin{array}{ccc}
	\begin{array}{c}
	\text{Fiber $\Phi_K^{-1}(aK)$}
	\\
	\text{(The author \cite{M2})}
	\end{array}
		&\quad \overset{\text{Theorem I}}{\Longrightarrow} \quad&
	\begin{array}{c}
	\text{Fiber $\Phi^{-1}(a)$}
	\\
	\text{(King-Terng \cite{KT93})}
	\end{array}
	\medskip
\\
	\rotatebox{90}{\hspace{-2mm}$\Longrightarrow$}&&\rotatebox{90}{\hspace{-2mm}$\Longrightarrow$}
	\vspace{-5mm}
\\
	\hspace{25mm} \text{\footnotesize $N = \{aK\}$}&&\hspace{23mm} \text{\footnotesize $N = \{a\}$}
	\bigskip
\\
	\begin{array}{c}
	\text{PF submanifold $\Phi_K^{-1}(N)$}
	\\
	\text{(Koike \cite{Koi02}, the author \cite{M2, M4})}
	\end{array}
		&\quad \overset{\text{Theorem I}}{\Longrightarrow} \quad&
	\begin{array}{c}
	\text{PF submanifold $\Phi^{-1}(N)$}
	\\
	\text{(Theorem \ref{thm2} of this paper)}
	\end{array}
	\vspace{-1mm}
\\
		\rotatebox{-90}{$\Longrightarrow$}&&\rotatebox{-90}{$\Longrightarrow$}
	\vspace{-6mm}
\\
	\hspace{27mm} \text{\footnotesize $N = H \cdot (\exp w)K $}\hspace{-10mm}&&\hspace{27mm} \text{\footnotesize $N = G(\sigma) \cdot \exp w$\hspace{-10mm}}
	\bigskip
\\
	\begin{array}{c}
	\text{Orbit $P(G, H \times K) * \hat{w}$}
	\\
	\text{(The author \cite{M4})}
	\end{array}
		&\quad\overset{\text{Theorem I}}{\Longrightarrow}\quad&
	\begin{array}{c}
	\text{Orbit $P(G, G(\sigma)) * \hat{w}$}
	\\
	\text{(Theorem \ref{thm3} of this paper)}
	\end{array}
	\vspace{-1mm}
\\	
		\rotatebox{-90}{$\Longrightarrow$}&&\rotatebox{-90}{$\Longrightarrow$}
	\vspace{-6mm}
\\
	\hspace{20mm} \text{\footnotesize $H = K$}&&\hspace{20mm} \text{\footnotesize $\sigma = \operatorname{id}$}
	\bigskip
\\
	\begin{array}{c}
	\text{Orbit $P(G, K \times K) * \hat{w}$}
	\\
	\text{(Pinkall-Thorbergsson \cite{PiTh90})}
	\end{array}
		&\quad\overset{\text{Theorem I}}{\Longrightarrow}\quad&
\begin{array}{c}
	\text{Orbit $P(G, \Delta G) * \hat{w}$}
	\\
	\text{(Palais-Terng \cite{PT88, Ter89}).}
	\end{array}
\end{array}
\end{equation*}

Based on those results we study the relation between the following two conditions on the austere property of orbits:
\begin{enumerate}
\item[(a)] The orbit $N = G(\sigma) \cdot \exp w$ is an austere submanifold of $G$.
\item[(b)] The orbit $\Phi^{-1}(N) = P(G, G(\sigma)) * \hat{w}$ is an austere PF submanifold of $V_\mathfrak{g}$.
\end{enumerate}
Let $\mathfrak{t}$ be a maximal abelian subalgebra of the fixed point algebra $\mathfrak{g}^\sigma$ and $\Delta(\sigma)$ the corresponding root system of $\mathfrak{t}$ (see Section \ref{pcosa}). We prove (Theorem \ref{mainthm}):
\begin{thmII} \ 
\begin{enumerate}
\item If $\Delta(\sigma)$ is a reduced root system then \textup{(a)} and \textup{(b)} are equivalent. 
\item If $\sigma = \operatorname{id}$ then \textup{(a)} and \textup{(b)} are equivalent.
\item If $\sigma^2 = \operatorname{id}$ then \textup{(a)} implies \textup{(b)}.
\item If $G$ is simple then \textup{(a)} implies \textup{(b)}. 
\end{enumerate}
Here \textup{(b)} does not imply \textup{(a)} in the cases \textup{(iii)} and \textup{(iv)}. In fact, there exists a counterexample.
\end{thmII}
\noindent
This is an analogue of the previous theorem (\cite{M4}). In fact it turns out by Theorem I that (a) and (b) are special cases of (A) and (B) respectively and thus (i)--(iii) follow from the previous results. However (iv) is \emph{not} trivial because the simplicity is not preserved by $\Omega$ and $\Upsilon$. Moreover the converse is \emph{not} trivial because the counterexample given in the previous paper is not an orbit of a $\sigma$-action. We prove (iv) based on the structure theory of automorphisms of $G$ and show a counterexample to the converse (Theorem \ref{prop9.2}). We also extend the author's previous results concerning weakly reflective PF submanifolds (Theorem \ref{thm5}).

Finally we study the relations to affine Kac-Moody symmetric spaces. Recall that each finite dimensional Lie group $G$ is regarded as the symmetric space $(G \times G) / \Delta G$. Similarly each affine Kac-Moody group $\hat{G}$ can be regraded as a symmetric space $\widehat{G \times G} / (\widehat{G \times G})^{\hat{\tau}}$ via a certain isomorphism $\Lambda$. We know that the isotropy representation of $\hat{G}$ is essentially the $P(G, G(\sigma))$-action on $V_\mathfrak{g}$ (\cite{Ter95}). We will prove that the isotropy representation of $\widehat{G \times G} / (\widehat{G \times G})^{\hat{\tau}}$ is essentially the $P(G \times G, G(\sigma) \times \Delta G)$-action on $V_{\mathfrak{g} \oplus \mathfrak{g}}$ (Proposition \ref{cor10.3}). Those two actions are conjugate via the isomorphisms $(\Omega, \Upsilon)$ and related to $\sigma$-actions via the parallel transport map (Theorem I). Consequently we show (Theorem \ref{final}):
\begin{thmIII}
There is a correspondence between the isomorphisms $\Lambda$, $(\Omega, \Upsilon)$ and $(\operatorname{id}, \phi)$:%
\smallskip
\begin{equation*}
\hspace{-2mm}
\begin{array}{rccc}
\begin{array}{r}
\textup{\small Affine Kac-Moody}\vspace{-0.5mm} \\ \textup{\small symmetric space}\vspace{-1mm}
\end{array}
\quad&
\hat{G} = \hat{L}(G, \sigma) 
& \overset{\Lambda}{\cong}& \widehat{G \times G} / (\widehat{G \times G})^{\hat{\tau}}
\bigskip
\\
\begin{array}{r}
\textup{\small Isotropy}\vspace{-0.5mm} \\ \textup{\small representation}
\end{array}
\quad&
P(G,G(\sigma)) \curvearrowright V_\mathfrak{g} & \quad \overset{(\Omega,\, \Upsilon)}{\cong} \quad & P(G \times G, G(\sigma) \times \Delta G) \curvearrowright V_{\mathfrak{g} \oplus \mathfrak{g}}
\bigskip
\\
\begin{array}{r}
\textup{\small Finite dimensional}\vspace{-0.5mm}\\ \textup{\small counterpart}
\end{array}
\quad&
G(\sigma) \curvearrowright G& \overset{(\operatorname{id},\, \phi)}{\cong} & G(\sigma) \curvearrowright (G \times G) / \Delta G.
\end{array}
\smallskip
\end{equation*}
\end{thmIII}

This paper is organized as follows.
In Section \ref{prel} we review foundations of PF submanifolds, PF actions and parallel transport maps.
In Section \ref{paral} we define and investigate the isomorphisms $\Omega$ and $\Upsilon$ and prove Theorem I.
In Section \ref{pcvp} we derive a formula for the principal curvatures of the PF submanifold $\Phi^{-1}(N)$ for a curvature-adapted submanifold $N$ of $G$.
In Section \ref{pcosa} we study the submanifold geometry of orbits of $\sigma$-actions. 
In Section \ref{pco} we derive an explicit formula for the principal curvatures of $P(G, G(\sigma))$-orbits.
In Section \ref{ausp} we study conditions for $P(G, G(\sigma))$-orbits to be austere PF submanifolds of $V_\mathfrak{g}$ and prove Theorem II. 
In Section \ref{weakp} we extend the previous results concerning weakly reflective PF submanifolds in Hilbert spaces.
In Section \ref{eg} we show concrete examples of austere PF submanifolds and weakly reflective PF submanifolds which are orbits of a $P(G, G(\sigma))$-action.
In Section \ref{AKM} we study the relations to affine Kac-Moody symmetric spaces and prove Theorem III.

%%%%%%%
\section{PF submanifolds, PF actions and parallel transport maps} \label{prel}

In this section we review foundations of PF submanifolds, PF actions and parallel transport maps.  

Let $N$ be a submanifold of a (separable) Hilbert space $V$. Suppose that $N$ has finite codimension in $V$. $N$ is called \emph{proper Fredholm} (PF) if the end point map $T^\perp N \rightarrow V$, $(p, \xi) \mapsto p + \xi$ restricted to a normal disc bundle of any finite radius is proper and Fredholm (\cite{Ter89}). The proper condition implies that for each $u \in V$ the function $f_u: N \rightarrow \mathbb{R}$, $p \mapsto \|p - u\|^2$ satisfies the Palais-Smale condition (\cite{Pal63, Sma64}). The Fredholm condition implies that the shape operators are compact self-adjoint operators. 

Let $\mathcal{L}$ be a Hilbert Lie group, acting on a Hilbert space $V$. The action is called \emph{proper Fredholm} (PF) if the map $\mathcal{L} \times V \rightarrow V \times V$, $(l, u) \mapsto (l \cdot u, u)$ is proper and the map $\mathcal{L} \rightarrow V$, $l \mapsto l \cdot u$ is Fredholm for each $u \in V$ (\cite{PT88}). If $\mathcal{L}$ is infinite dimensional and the action is isometric PF, then every $\mathcal{L}$-orbit is a PF submanifold of $V$ (\cite[Theorem 7.1.6]{PT88}). 

Let $G$ be a connected compact Lie group with Lie algebra $\mathfrak{g}$. Choose an $\operatorname{Ad}(G)$-invariant inner product $\langle \cdot, \cdot \rangle$ of $\mathfrak{g}$ and equip the corresponding bi-invariant Riemannian metric with $G$.  Denote by $\mathcal{G} = H^1([0,1], G)$ the path group of all Sobolev $H^1$-paths from $[0,1]$ to $G$ and by $V_\mathfrak{g} = H^0([0,1],\mathfrak{g})$ the Hilbert space of all $H^0$-paths from $[0,1]$ to $\mathfrak{g}$. Then $\mathcal{G}$ acts on $V_\mathfrak{g}$ by the affine isometry:
\begin{equation*}
g * u = gug^{-1} - g' g^{-1},
\end{equation*}
where $g \in \mathcal{G}$, $u \in V_\mathfrak{g}$ and $g'$ denotes the weak derivative of $g$. It follows that this action is transitive and PF (\cite{Ter95}).

Let $L$ be a closed subgroup of $G \times G$. The subgroup
\begin{equation*}
P(G, L) = \{g \in \mathcal{G} \mid (g(0) , g(1)) \in L\}
\end{equation*}
acts on $V_\mathfrak{g}$ by the same formula. Note that $P(G, L)$ is the inverse image of $L$ under the submersion $\Psi^G: \mathcal{G} \rightarrow G \times G$ defined by 
\begin{equation*}
\Psi^G(g) = (g(0), g(1)).
\end{equation*}
It follows that the $P(G, L)$-action is also PF (\cite{Ter95}). Thus every orbit of the $P(G, L)$-action is a PF submanifold of $V_\mathfrak{g}$. 

For each $u \in V_\mathfrak{g}$ we define $g_u \in \mathcal{G}$ as the unique solution to the linear ordinary differential equation
\begin{equation*}
g^{-1} g' = u,
\quad
g(0) = e.
\end{equation*}
The \emph{parallel transport map} $\Phi: V_\mathfrak{g} \rightarrow G$ is a Riemannian submersion defined by 
\begin{equation*}
\Phi(u) = g_u(1).
\end{equation*}
By definition $\Phi(\hat{x}) = \exp x$ where $\hat{x}$ denotes the constant path with value $x \in \mathfrak{g}$. Consider the action of $G \times G$ on $G$ by
\begin{equation}\label{action1}
(b,c) \cdot a = bac^{-1}.
\end{equation}
Then $\Phi$ is equivariant via $\Psi^G$, that is, 
\begin{equation*}
\Phi( g * u) = (g(0), g(1)) \cdot \Phi(u)
\end{equation*}
for $g \in \mathcal{G}$ and $u \in V_\mathfrak{g}$. Moreover it follows that
\begin{equation*}
P(G, L) * u = \Phi^{-1}(L \cdot \Phi(u))
\end{equation*}
for any closed subgroup $L$ of $G \times G$ (\cite{Ter95}). More generally, if $N$ is a closed submanifold of $G$ then the inverse image $\Phi^{-1}(N)$ is a PF submanifold of $V_\mathfrak{g}$ (\cite[Lemma 5.8]{TT95}).

Let $K$ be a symmetric subgroup of $G$ with Lie algebra $\mathfrak{k}$. Denote by $\mathfrak{g} = \mathfrak{k} + \mathfrak{m}$ the decomposition into the $(\pm1)$-eigenspaces of the involution, which is called the canonical decomposition. Restricting the $\operatorname{Ad}(G)$-invariant inner product of $\mathfrak{g}$ to $\mathfrak{m}$ we equip the induced $G$-invariant Riemannian metric with the homogeneous space $G/K$. Then $G/K$ is a compact symmetric space and the natural projection $\pi :  G \rightarrow G/K$ is a Riemannian submersion with totally geodesic fiber. The composition 
\begin{equation*}
\Phi_K : = \pi \circ \Phi : V_\mathfrak{g} \rightarrow G \rightarrow G/K
\end{equation*}
is a Riemannian submersion which is called the \emph{parallel transport map over $G/K$}. Consider the action of $G$ on $G/K$ by
\begin{equation}\label{action2}
b \cdot (aK) = (b a) K.
\end{equation}
Denote by $p^G: G \times G \rightarrow G$ the projection onto the first component. Then  $\Phi_K$ is equivariant via $p^G \circ \Psi^G$, that is, 
\begin{equation*}
\Phi_K(g * u) = g(0) \cdot \Phi_K(u)
\end{equation*}
for $g \in P(G, G \times K)$. Moreover we have
\begin{equation*}
P(G, H \times K) * u = \Phi_K^{-1}(H \cdot \Phi_K(u))
\end{equation*}
for any closed subgroup $H$ of $G$. More generally, if $N$ is a closed submanifold of $G/K$ then the inverse image $\Phi_K^{-1}(N)$ is a PF submanifold of $V_\mathfrak{g}$.

%%%%%%%%%%
\section{The canonical isomorphism of path spaces}\label{paral}

In this section we introduce an isomorphism between path spaces and investigate their relations to the $P(G, L)$-actions and parallel transport maps.

Recall that a connected compact Lie group $G$ with a bi-invariant metric is regarded as the symmetric space $(G \times G) /\Delta G$. In fact the diagonal $\Delta G$ is a symmetric subgroup of $G \times G$ with involution $(b,c) \mapsto (c,b)$. The canonical decomposition is given by 
\begin{equation*}
\mathfrak{g} \oplus \mathfrak{g} = \mathfrak{k} + \mathfrak{m},
\end{equation*}
where $\mathfrak{k} = \Delta \mathfrak{g} = \{(x,x) \mid x \in \mathfrak{g}\}$ and $\mathfrak{m} = (\Delta \mathfrak{g})^\perp = \{(x, - x) \mid x \in \mathfrak{g}\}$.  Consider the diffeomorphism
\begin{equation*}
\phi: (G \times G) / \Delta G \rightarrow G
, \quad
(b, c) \Delta G \mapsto bc^{-1},
\end{equation*}
whose differential at $(e,e) \Delta G$ is identified with the map
\begin{equation*}
d\phi : (\Delta \mathfrak{g})^\perp \rightarrow \mathfrak{g}
, \quad
(x, - x) \mapsto  2x.
\end{equation*}
Note that 
\begin{equation*}
\langle d \phi (x, -x) , d \phi (y, - y)\rangle  = 2 \langle (x, -x), (y, -y) \rangle.
\end{equation*}
Note also that $G \times G$ acts on $G$ by \eqref{action1} and acts also on $(G \times G)/ \Delta G$ by \eqref{action2}. Clearly $\phi$ is equivariant with respect to these $G \times G$-actions.

There is a natural isomorphism of path spaces corresponding to $\phi$:

\begin{defi}\label{cannicalisom}
Define the injective homomorphism $\Omega: \mathcal{G} \rightarrow H^1([0,1], G \times G)$ by 
\begin{equation*}
\Omega (g) = (g(t/2), g(1 - t/2)),
\end{equation*}
and the linear isomorphism $\Upsilon: V_\mathfrak{g} \rightarrow V_{\mathfrak{g}\oplus \mathfrak{g}}$ by
\begin{equation*}
\Upsilon (u) = (\ \frac{1}{2} u(t/2), \  -\frac{1}{2}u(1 - t/2) \ ).
\end{equation*}
We call $\Upsilon$ the \emph{canonical isomorphism} from $V_\mathfrak{g}$ to $V_{\mathfrak{g} \oplus \mathfrak{g}}$. We also call $\Omega$ the canonical isomorphism (from $\mathcal{G}$ to $\Omega(\mathcal{G})$) if there is no confusion.
\end{defi}

It is easy to see that 
\begin{equation*}
\langle \Upsilon(u), \Upsilon(v) \rangle_{L^2} = \frac{1}{2} \langle u, v\rangle_{L^2}.
\end{equation*}
The maps $\Omega$ and $\Upsilon$ have the following equivariant properties: 

\begin{thm}\label{thm1} \ 
\begin{enumerate}
\item $\Omega(P(G, L)) = P(G \times G, L \times \Delta G)$ for a closed subgroup $L$ of $G \times G$. In particular the image of $\Omega$ is $P(G \times G, G \times G \times \Delta G)$.

\item $\Upsilon$ is equivariant via $\Omega$, that is, 
\begin{equation*}
\Upsilon(g * u ) = \Omega(g) * \Upsilon(u)
\end{equation*}
for $g \in \mathcal{G}$ and $u \in V_\mathfrak{g}$.

\item The following diagrams are commutative:
\begin{equation*}
\begin{CD}
\mathcal{G} @>\Omega >> H^1([0,1], G \times G)
\\
@V \Psi^G VV @V p^{G \times G} \circ \Psi^{G \times G}VV 
\\
G \times G @>\operatorname{id} >> G \times G
\end{CD}
\quad \text{and} \qquad
\begin{CD}
V_\mathfrak{g} @>\Upsilon>> V_{\mathfrak{g} \oplus \mathfrak{g}}
\\
@V\Phi VV @V \Phi_{\Delta G} VV
\\
G @<\phi<< (G \times G) /\Delta G\,,
\end{CD}
\end{equation*}
where $\Phi_{\Delta G}: V_{\mathfrak{g}\oplus \mathfrak{g}} \rightarrow G \times G \rightarrow (G \times G) /\Delta G$ denotes the parallel transport map over the symmetric space $(G \times G) /\Delta G$.
\end{enumerate}
\end{thm}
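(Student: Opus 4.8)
The strategy is to verify the four statements essentially by direct computation, exploiting the fact that $\Omega$ and $\Upsilon$ are built from the substitutions $t \mapsto t/2$ and $t \mapsto 1-t/2$, which reparametrize $[0,1]$ onto its two halves $[0,1/2]$ and $[1/2,1]$ traversed in opposite directions. The first part (i) is the most elementary: a path $g\in\mathcal G$ satisfies $(g(0),g(1))\in L$ if and only if $\Omega(g)$, evaluated at the endpoints, gives $(g(0),g(1))$ at $t=0$ and $(g(1/2),g(1/2))\in\Delta G$ at $t=1$; so $\Omega(g)\in P(G\times G, L\times\Delta G)$, and since $\Omega$ has image contained in those $(h_1,h_2)$ with $h_1(1)=h_2(1)$, one gets exactly $P(G\times G, G\times G\times \Delta G)$ when $L=G\times G$. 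One must also check that $\Omega$ actually lands in $H^1$, which follows because $t\mapsto t/2$ and $t\mapsto 1-t/2$ are smooth with bounded derivative.

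For (ii), the plan is to plug $g\ast u = gug^{-1}-g'g^{-1}$ into the definitions. Writing $\Upsilon(u)=(\tfrac12 u(t/2), -\tfrac12 u(1-t/2))$ and $\Omega(g)=(g(t/2), g(1-t/2))$, one computes $\Omega(g)\ast\Upsilon(u)$ componentwise; the key point is the chain rule: $\frac{d}{dt}\,g(t/2) = \tfrac12 g'(t/2)$ and $\frac{d}{dt}\,g(1-t/2) = -\tfrac12 g'(1-t/2)$, and the factors $\tfrac12$ match the $\tfrac12$ in $\Upsilon$ so that the gauge formula is preserved on each half, with the sign flip on the second component absorbing the $-$ in the reversed parametrization. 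This is a routine but slightly delicate bookkeeping exercise with the two signs and the two factors of $\tfrac12$.

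For (iii), the left square commutes because evaluating $\Omega(g)$ at $t=0$ returns $(g(0),g(1))=\Psi^G(g)$, and $p^{G\times G}\circ\Psi^{G\times G}$ just extracts the value at $0$ (here one should check the convention for which endpoint $p^{G\times G}\circ\Psi^{G\times G}$ reads off). The right square is the heart of the matter: one must show $\varphi\bigl(\Phi_{\Delta G}(\Upsilon(u))\bigr) = \Phi(u)$. The plan is to track the defining ODE. If $g_u$ solves $g_u^{-1}g_u'=u$, $g_u(0)=e$, set $c(t)=g_u(t/2)$ and $d(t)=g_u(1-t/2)$; then $(c,d)$ does not have $(c(0),d(0))=(e,e)$, so instead I would use the $P(G\times G,\{e\}\times\{e\})$-translate, i.e.\ left-multiply by a constant so that the horizontal lift through $0$ starts at the identity coset, and identify $\Phi_{\Delta G}(\Upsilon(u))$ as the endpoint coset $(c(1),d(1))\Delta G = (g_u(1/2), g_u(1/2))\Delta G$ corrected appropriately; applying $\varphi$, which sends $(b,c)\Delta G\mapsto bc^{-1}$, should collapse this to $g_u(1)g_u(0)^{-1}=g_u(1)=\Phi(u)$. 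The factor $\tfrac12$ in $\Upsilon$ is exactly what is needed so that the reparametrized path solves the $\Phi_{\Delta G}$-ODE on $[0,1]$; this compatibility of the $\tfrac12$ with both the chain rule and the factor $2$ appearing in $d\varphi$ is where I expect the main obstacle to lie, and it will require care in matching the normalizations of the metrics on $(G\times G)/\Delta G$ and on $G$.

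Finally, (iv) is a formal consequence: from the right square of (iii), $\Phi = \varphi\circ\Phi_{\Delta G}\circ\Upsilon$, hence $\Phi^{-1}(N) = \Upsilon^{-1}\bigl(\Phi_{\Delta G}^{-1}(\varphi^{-1}(N))\bigr) = (\Upsilon\circ\Phi_{\Delta G}\circ\varphi)^{-1}(N)$ reading the composition in the correct order; note $\varphi$ is a diffeomorphism so $\varphi^{-1}(N)$ makes sense. Combining with $P(G,L)\ast u = \Phi^{-1}(L\cdot\Phi(u))$ from Section \ref{prel}, the $G\times G$-equivariance of $\varphi$, and the analogous identity $P(G\times G, L\times\Delta G)\ast w = \Phi_{\Delta G}^{-1}\bigl((L\times\Delta G)\cdot\Phi_{\Delta G}(w)\bigr)$, one gets $P(G,L)\ast u = \Upsilon^{-1}\bigl(P(G\times G, L\times\Delta G)\ast\Upsilon(u)\bigr)$; this last identity also follows directly from (i) and (ii) since $\Upsilon$ is a bijection intertwining the two group actions. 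I would present (iv) as a short corollary of the preceding parts rather than a separate computation.
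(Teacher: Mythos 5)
Your proposal is correct, and parts (i), (ii), the left square of (iii), and (iv) follow essentially the same route as the paper. The one genuinely different step is the right square of (iii). You propose to integrate the defining ODE directly: with $g_u^{-1}g_u'=u$, $g_u(0)=e$, the two components of the horizontal lift of $\Upsilon(u)$ are $\tilde g_1(t)=g_u(t/2)$ and $\tilde g_2(t)=g_u(1)^{-1}g_u(1-t/2)$ (the left translation by the constant $g_u(1)^{-1}$ fixes the initial condition without changing the logarithmic derivative), whence $\Phi_{\Delta G}(\Upsilon(u))=(g_u(1/2),\,g_u(1)^{-1}g_u(1/2))\Delta G$ and $\varphi$ of this coset is $g_u(1/2)\cdot g_u(1/2)^{-1}g_u(1)=\Phi(u)$. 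This works and is fully explicit. The paper instead avoids the ODE altogether: it writes $u=h*\hat 0$ using transitivity of the gauge action, applies part (ii) to get $\Upsilon(u)=\Omega(h)*(\hat 0,\hat 0)$, and then invokes the equivariance $\Phi_{\Delta G}(g*v)=g(0)\cdot\Phi_{\Delta G}(v)$ recalled in Section \ref{prel}, which yields $\Phi_{\Delta G}(\Upsilon(u))=(h(0),h(1))\Delta G=\varphi^{-1}(h(0)h(1)^{-1})=\varphi^{-1}(\Phi(u))$. Your computation buys self-containedness and makes visible exactly why the factors $\tfrac12$ and the sign in $\Upsilon$ are forced; the paper's argument is shorter and reuses already-established structure. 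One reassurance: your worry about matching metric normalizations here is unfounded, since $\Phi_{\Delta G}$ is defined by the ODE and the commutativity of the diagram is a statement about maps, not isometries; the factor $2$ in $d\varphi$ only matters later when transferring curvature data. Two small points to tighten: in (i) the claimed \emph{equality} $\Omega(P(G,L))=P(G\times G,L\times\Delta G)$ needs the converse, which your text only asserts — given $(h_1,h_2)$ with $h_1(1)=h_2(1)$ one should exhibit the $H^1$ path $g(s)=h_1(2s)$ on $[0,1/2]$, $g(s)=h_2(2-2s)$ on $[1/2,1]$, glued by the matching condition at $s=1/2$; and in (iv) you correctly read the composition as $\Phi=\varphi\circ\Phi_{\Delta G}\circ\Upsilon$, which is what the displayed formula must mean.
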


\begin{proof}
(i): Clearly $\Omega(g)(0) = (g(0), g(1))$ and $\Omega(g)(1) = (g(1/2) , g(1/2)) \in \Delta G$. Thus $g \in P(G, L)$ if and only if $\Omega(g) \in P(G \times G, L \times \Delta G)$. Conversely every element of $P(G \times G, L \times \Delta G)$ is obtained in this way. This proves (i).

(ii): We set $\Omega(g) = \tilde{g} = (\tilde{g}_1, \tilde{g}_2)$ and $\Upsilon(u) = \tilde{u} = (\tilde{u}_1, \tilde{u}_2)$ so that $\tilde{g}_1(t) = g(t/2)$, $\tilde{g}_2(t) = g(1 - t/2)$, $\tilde{u}_1 =  \frac{1}{2} u(t/2)$, $\tilde{u}_2 = - \frac{1}{2} u(1 - t/2)$. Then we have
\begin{align*}
\Upsilon(gug^{-1}) (t)
&=
(\ \frac{1}{2} g(t/2)u(t/2)g(t/2)^{-1}, - \frac{1}{2} g(1 - t/2)u(1 - t/2)g(1 - t/2)^{-1}\ )
\\
&=
(\ \tilde{g}_1(t) \tilde{u}_1(t) \tilde{g}_1(t)^{-1}, \tilde{g}_2(t)\tilde{u}_2(t) \tilde{g}_2(t)^{-1}\ )
\\
&=
(\tilde{g}_1, \tilde{g}_2) (\tilde{u}_1, \tilde{u}_2) (\tilde{g}_1, \tilde{g}_2)^{-1}(t)
=
\Omega(g) \Upsilon(u) \Omega(g)^{-1} (t).
\end{align*}
Moreover since $\tilde{g}_1'(t) =\frac{1}{2} g'(t/2)$ and $\tilde{g}_2'(t) = - \frac{1}{2} g'(1 - t/2)$ we have
\begin{align*}
\Upsilon(g'g^{-1})(t)
&= 
(\frac{1}{2} g'(t/2)g(t/2)^{-1}, - \frac{1}{2} g'(1 - t/2)g(1 -t/2)^{-1})
\\
&=
(\tilde{g}_1'(t) \tilde{g}_1(t)^{-1},  \tilde{g}_2'(t) \tilde{g}_2(t)^{-1})
=
(\tilde{g}_1, \tilde{g}_2)' (\tilde{g}_1, \tilde{g}_2)^{-1}
=
\Omega(g)' \Omega(g)^{-1}(t).
\end{align*}
Therefore we have 
\begin{align*}
\Upsilon(g * u) 
&= 
\Upsilon(g ug^{-1} - g'g^{-1})
=
\Upsilon(g ug^{-1}) -  \Upsilon(g'g^{-1})
\\
&=
\Omega(g) \Upsilon(u)\Omega(g)^{-1} -  \Omega(g)'\Omega(g)^{-1}
=
\Omega(g) * \Upsilon(u).
\end{align*}
This proves (ii).

(iii): Let $g \in \mathcal{G}$. Then we have
\begin{align*}
p^{G \times G} \circ \Psi^{G \times G} \circ \Omega(g)
&=
p^{G \times G} \circ \Psi^{G \times G} (\tilde{g})
=
p^{G \times G} (\tilde{g}(0), \tilde{g}(1))
\\
&=
\tilde{g}(0) = (\tilde{g}_1(0), \tilde{g}_2(0)) = (g(0), g(1)) = \Psi^G(g).
\end{align*}
Let $u \in V_\mathfrak{g}$. Take $h \in \mathcal{G}$ satisfying $u = h * \hat{0}$. Then we have
\begin{align*}
\Phi_{\Delta G}(\Upsilon(u)) 
& = 
\Phi_{\Delta G}(\Omega(h) * (\hat{0}, \hat{0}) )
= 
(p^{G \times G} \circ \Psi^{G \times G})(h) \cdot \Phi_{\Delta G} (\hat{0}, \hat{0})
\\
&=
(h(0), h(1)) \Delta G 
=
\phi^{-1}(h(0)h(1)^{-1}) = \phi^{-1}(\Phi(u)).
\end{align*}
This proves (iii)
\end{proof}

Two isometric actions $A_1$ on $X_1$ and $A_2$ on $X_2$ are said to be \emph{conjugate} if there exist an isomorphism $\phi: A_1 \rightarrow A_2$ and an isometry $\varphi:X_1\rightarrow X_2$ satisfying $ \varphi(a \cdot p) = \phi(a) \cdot \varphi(p)$ for $a \in A_1$ and $p \in X_1$. In this case we say that these actions are conjugate \emph{via $\phi$ and $\varphi$}. We allow $\varphi$ to be a diffeomorphism such that $\lambda \varphi$ is an isometry for a suitable $\lambda \in \mathbb{R}$. 

\begin{cor}\label{cor3.1}
The $P(G, L)$-action on $V_\mathfrak{g}$ is conjugate to the $P(G \times G, L \times \Delta G)$-action on $V_{\mathfrak{g} \oplus \mathfrak{g}}$ via $\Omega$ and $\Upsilon$. In particular the $P(G, G(\sigma))$-action on $V_\mathfrak{g}$ is conjugate to the $P(G \times G, G(\sigma) \times \Delta G)$-action on $V_{\mathfrak{g} \oplus \mathfrak{g}}$ via $\Omega$ and $\Upsilon$.
\end{cor}
\noindent

Corollary \ref{cor3.1} is a consequence of (i) and (ii) of Theorem \ref{thm1}. It allows us to apply the general results of $P(G, H \times K)$-actions to $P(G, G(\sigma))$-actions. (iii) of Theorem \ref{thm1} allows us to apply the general results of $\Phi_K$ to $\Phi$.

\begin{rem}\label{PhThrmk}
Considering the case $\sigma = \operatorname{id}$ in Corollary \ref{cor3.1} we see that the $P(G, \Delta G)$-action on $V_\mathfrak{g}$ is conjugate to the $P(G \times G, \Delta G \times \Delta G)$-action on $V_{\mathfrak{g} \oplus \mathfrak{g}}$. This fact was essentially observed by Pinkall and Thorbergsson \cite[Remark in p.\ 283]{PiTh90}.
\end{rem}

%%%%%%%%%%
\section{Principal curvatures via the parallel transport map $\Phi$} \label{pcvp}

In this section we derive a formula for the principal curvatures of the PF submanifold $\Phi^{-1}(N)$ for a curvature-adapted submanifold $N$ of $G$. 

Let $G$ be a connected compact semisimple Lie group with a bi-invariant metric, $\Phi: V_\mathfrak{g} \rightarrow G$ the parallel transport map and $N$ a  closed submanifold of $G$. Suppose that $N$ is \emph{$k$-curvature-adapted} (\cite{M4}), that is,  for each $a \in N$ the following conditions hold:
\begin{enumerate}
\item for every $v \in T^\perp_a N$ the curvature operator $R_v$ leaves $T_a N$ invariant,
\item for each $v \in T^\perp_a N$ there exists a $k$-dimensional abelian subalgebra $\mathfrak{t}$ of $\mathfrak{g}$ satisfying $v \in dl_a(\mathfrak{t}) \subset T^\perp_a N$ such that
\begin{equation*}
\{R_{dl_a(\xi)} |_{T_a N}\}_{\xi \in \mathfrak{t}} \cup \{A^N_{dl_a(\xi)}\}_{\xi \in \mathfrak{t}}
\end{equation*}
is a commuting family of endomorphisms of $T_{a}N$.
\end{enumerate}
Here the curvature operator $R_v$ is defined by $R_v(x) = R^G(x, v)v$ where $R^G$ denotes the curvature tensor of $G$. If $a = e$ then $R_v$ is identified with $- \frac{1}{4} \operatorname{ad}(v)^2$. Clearly $1$-curvature-adapted submanifolds are just curvature-adapted submanifolds in the original sense (\cite{BV92}).  We know that all orbits of sigma-actions of cohomogeneity $k$ are $k$-curvature-adapted submanifolds (\cite[Corollaries 3.3 and 3.4]{GT07}, \cite[Proposition 4.2]{M4}).

By left translations we can assume without loss of generality that $N$ is through $e \in G$. Choose and fix a $k$-dimensional abelian subalgebra $\mathfrak{t}$ of $\mathfrak{g}$ satisfying the above condition (ii) and consider the real root space decomposition with respect to $\mathfrak{t}$:
\begin{equation*}
\mathfrak{g} = \mathfrak{g}_0 + \sum_{\alpha \in \Delta^+} \mathfrak{g}_\alpha,
\end{equation*}
where
\begin{align*}
\mathfrak{g}_0
&=
\{x \in \mathfrak{g} \mid \operatorname{ad}(\eta) x = 0 \ \ \text{for all} \ \eta \in \mathfrak{t}  \},
\\
\mathfrak{g}_\alpha
&=
\{x \in \mathfrak{g} \mid  \operatorname{ad}(\eta)^2 x = - \langle \alpha, \eta \rangle ^2 x \ \ \text{for all} \ \eta \in \mathfrak{t} \}.
\end{align*}
This is the common eigenspace decomposition of the commuting operators $\{\operatorname{ad}(\xi)^2\}_{\xi \in \mathfrak{t}}$. On the other hand, we have the common eigenspace decomposition of the commuting shape operators $\{A^N_\xi\}_{\xi \in \mathfrak{t}}$. More precisely there exits a unique finite subset $\Lambda$ of $\mathfrak{t}$ such that (\cite[Lemma 4.4]{M4})
\begin{equation*}
T_e N = \sum_{\lambda \in \Lambda} S_\lambda,
\end{equation*}
where
\begin{equation*}
S_\lambda = \{x \in T_e N \mid A^N_{\xi} (x) = \langle \lambda, \xi \rangle x \ \text{for all} \ \xi \in \mathfrak{t}\}.
\end{equation*}
Since all those operators commute we have 
\begin{align*}
T_e N &= \sum_{\lambda \in \Lambda_0} (\mathfrak{g}_0 \cap S_\lambda) + \sum_{\alpha \in \Delta^+} \sum_{\lambda \in \Lambda_\alpha} (\mathfrak{g}_\alpha \cap S_\lambda),
\\
T^\perp_e N &= \mathfrak{g}_0 \cap T^\perp_e N + \sum_{\alpha \in \Delta^+} (\mathfrak{g}_\alpha \cap T^\perp_e N),
\end{align*}
where $\Lambda_0 = \{\lambda \in \Lambda \mid \mathfrak{g}_0 \cap S_\lambda\}$ and $\Lambda_\alpha = \{\lambda \in \Lambda \mid \mathfrak{g}_\alpha \cap S_\lambda \neq \{0\}\}$. Set 
\begin{equation*}
\begin{array}{lcl}
m(0, \lambda) = \dim (\mathfrak{g}_\alpha \cap S_\lambda), 
&&
m(\alpha, \lambda) = \dim (\mathfrak{g}_\alpha \cap S_\lambda), 
\medskip
\\
m(0, \perp ) = \dim (\mathfrak{g}_0 \cap T^\perp_e N),
&&
m(\alpha, \perp) = \dim (\mathfrak{g}_\alpha \cap T^\perp_e N).
\end{array}
\end{equation*}

Based on these decompositions we can describe the principal curvatures of the PF submanifold $\Phi^{-1}(N)$:
\begin{thm}\label{thm2}
Let $G$ be a connected compact semisimple Lie group with a bi-invariant metric, $N$ a $k$-curvature-adapted submanifold of $G$ through $e \in G$ and $\mathfrak{t}$ an abelian subalgebra of $\mathfrak{g}$ satisfying the above condition \textup{(ii)}. Then for each $\xi \in \mathfrak{t}$ the principal curvatures of $\Phi^{-1}(N)$ in the direction of $\hat{\xi}$ are given by
\begin{align*}
\{0\}
& \cup
\{\langle \lambda ,\xi \rangle 
\mid
\lambda \in \Lambda_0 \cup {\textstyle \bigcup_{\beta \in \Delta^+_\xi }} \Lambda_\beta \}
\\
& \cup
\left\{
\left.
\frac{\langle \alpha, \xi \rangle}{2 \arctan \frac{\langle \alpha, \xi \rangle}{2 \langle \lambda, \xi \rangle}+ 2 m \pi}
\ \right|\ 
\alpha \in \Delta^+ \backslash \Delta^+_\xi , \ \lambda \in \Lambda_\alpha, \ m \in \mathbb{Z} \right\}
\\
& \cup
\left\{
\left.
\frac{\langle \alpha, \xi \rangle}{2 n \pi}
\ \right|\ 
\alpha \in \Delta^+ \backslash \Delta^+_\xi , \ \mathfrak{g}_\alpha \cap T^\perp_{eK} N \neq \{0\}, \  n \in \mathbb{Z} \backslash \{0\}
\right\},
\end{align*}
where we set $\Delta^+_\xi := \{\beta \in \Delta^+ \mid \langle \beta , \xi \rangle = 0\}$ and $\arctan \frac{\langle \alpha, \xi \rangle}{2 \langle \lambda, \xi \rangle} := \frac{\pi}{2}$ if $\langle \lambda, \xi \rangle = 0$. The multiplicities are respectively given by
\begin{equation*}
\infty
, \qquad
m(0, \lambda) + \sum_{\beta \in \Delta_\xi} m(\beta, \lambda)
, \qquad
m(\alpha, \lambda)
, \qquad
m(\alpha, \perp).
\end{equation*}
\end{thm}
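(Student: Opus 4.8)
The plan is to reduce the statement to the corresponding formula over a symmetric space, for which the author has already proved a general result in \cite{M4}. By Theorem \ref{thm1} the parallel transport map $\Phi: V_\mathfrak{g} \to G$ is identified, via the canonical isomorphism $\Upsilon$ and the diffeomorphism $\varphi: (G \times G)/\Delta G \to G$, with the parallel transport map $\Phi_{\Delta G}: V_{\mathfrak{g} \oplus \mathfrak{g}} \to (G \times G)/\Delta G$ over the symmetric space $(G \times G)/\Delta G$; moreover $\Phi^{-1}(N)$ is identified with $\Phi_{\Delta G}^{-1}(\varphi^{-1}(N))$. Since $\Upsilon$ is a linear isomorphism satisfying $\langle \Upsilon u, \Upsilon v\rangle_{L^2} = \tfrac12 \langle u, v\rangle_{L^2}$, it is a homothety, hence maps PF submanifolds to PF submanifolds and scales principal curvatures by the constant factor $\sqrt 2$ (equivalently, one works with the rescaled inner product $2\langle\cdot,\cdot\rangle$ on $V_{\mathfrak g\oplus\mathfrak g}$, under which $\Upsilon$ is an isometry; this is the reason for the factor $2$ recorded in $\langle d\varphi(x,-x), d\varphi(y,-y)\rangle = 2\langle(x,-x),(y,-y)\rangle$). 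So it suffices to compute the principal curvatures of $\Phi_{\Delta G}^{-1}(\tilde N)$ where $\tilde N = \varphi^{-1}(N)$, and then transport the answer back; the normalizations have been arranged precisely so that the factors of $2$ cancel and no spurious constant survives.

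First I would set up the root-space bookkeeping on the symmetric space side. Write $G \times G = (G\times G)/\Delta G$ with canonical decomposition $\mathfrak g \oplus \mathfrak g = \Delta\mathfrak g + (\Delta\mathfrak g)^\perp$, so that $\mathfrak m = (\Delta\mathfrak g)^\perp = \{(x,-x)\}$ is identified with $\mathfrak g$ via $d\varphi:(x,-x)\mapsto 2x$. Under this identification the curvature tensor of $(G\times G)/\Delta G$ at $eK$, namely $R(X,Y)Z = -[[X,Y],Z]$ for $X,Y,Z\in\mathfrak m$, corresponds to the curvature tensor of $G$ at $e$, and the Jacobi operator $R_v$ for $v = d\varphi(x,-x) = 2x \in \mathfrak g$ corresponds to $-\tfrac14\operatorname{ad}(v)^2$ on $T_eN$ — exactly as stated in the paragraph preceding the theorem. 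A $k$-curvature-adapted submanifold $N \subset G$ through $e$ pulls back to a $k$-curvature-adapted submanifold $\tilde N \subset (G\times G)/\Delta G$ through $eK$ with the same tangent and normal data, the same abelian subspace $\mathfrak t$, the same root system $\Delta^+$ (the restricted roots of $(G\times G)/\Delta G$ relative to $\mathfrak t \subset \mathfrak m$ coincide with the real roots of $\mathfrak g$ relative to $\mathfrak t$ appearing in the simultaneous eigenspace decomposition of $\{\operatorname{ad}(\xi)^2\}$), the same shape-operator eigenvalues $\Lambda$, and the same multiplicities $m(0,\lambda)$, $m(\alpha,\lambda)$, $m(0,\perp)$, $m(\alpha,\perp)$.

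Then I would simply invoke \cite[Theorem 5.2]{M4} (the formula for principal curvatures of $\Phi_K^{-1}(\tilde N)$ for a $k$-curvature-adapted $\tilde N$ in a general compact symmetric space $G/K$) applied to $G/K = (G\times G)/\Delta G$ and $\tilde N = \varphi^{-1}(N)$, in the direction $\hat\xi$ for $\xi \in \mathfrak t$. This yields the list: $\{0\}$ with infinite multiplicity (coming from the kernel of the differential of $\Phi_{\Delta G}$, i.e.\ directions tangent to the fibre), the values $\langle\lambda,\xi\rangle$ for $\lambda \in \Lambda_0 \cup \bigcup_{\beta\in\Delta^+_\xi}\Lambda_\beta$ with multiplicity $m(0,\lambda)+\sum_{\beta\in\Delta_\xi} m(\beta,\lambda)$ (the "flat part", where $\langle\beta,\xi\rangle = 0$ so no oscillation occurs), and for each $\alpha \in \Delta^+\setminus\Delta^+_\xi$ the families $\frac{\langle\alpha,\xi\rangle}{2\arctan\frac{\langle\alpha,\xi\rangle}{2\langle\lambda,\xi\rangle}+2m\pi}$ with multiplicity $m(\alpha,\lambda)$ and $\frac{\langle\alpha,\xi\rangle}{2n\pi}$ with multiplicity $m(\alpha,\perp)$, the latter arising from the normal directions in $\mathfrak g_\alpha$. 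Each of these comes from solving, along the geodesic $t\mapsto \exp(t\xi)$, the linear ODE governing the horizontal lift of a Jacobi-type field and matching boundary conditions at $t=0,1$ dictated by the shape operator of $\tilde N$; on each $2$-dimensional invariant block $\mathfrak g_\alpha \cap S_\lambda$ this is an elementary computation producing the $\arctan$ terms, and on each normal block it produces the $\frac{1}{2n\pi}$ terms.

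The main obstacle is bookkeeping rather than conceptual: one must check carefully that the homothety constant $\sqrt 2$ from $\Upsilon$ and the factor $2$ in $d\varphi$ interact exactly so as to leave the final formula free of extra constants, and that the identification of the root data of $(G\times G)/\Delta G$ with the "real root" data of $\mathfrak g$ used to state the theorem is literally the same decomposition (this is where $R_v = -\tfrac14\operatorname{ad}(v)^2$ enters, forcing the $\langle\alpha,\eta\rangle^2$ normalization in the definition of $\mathfrak g_\alpha$). Once these normalizations are pinned down, the theorem follows directly from \cite[Theorem 5.2]{M4} via Theorem \ref{thm1}; I would present it as a corollary of that earlier result together with the canonical-isomorphism identifications, spelling out only the translation of the root/multiplicity data and the cancellation of the constants.
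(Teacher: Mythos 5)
Your proposal is correct and follows essentially the same route as the paper: use Theorem \ref{thm1} (iii)--(iv) to identify $\Phi^{-1}(N)$ with $\Phi_{\Delta G}^{-1}(\varphi^{-1}(N))$ in $V_{\mathfrak g\oplus\mathfrak g}$, transport the root-space and shape-operator data to $\mathfrak m=(\Delta\mathfrak g)^\perp$ via $(d\varphi)^{-1}(x)=\tfrac12(x,-x)$, and invoke \cite[Theorem 5.2]{M4}. The only imprecision is your claim that the restricted roots of $(G\times G)/\Delta G$ ``coincide'' with the real roots of $\mathfrak g$: under the identification they satisfy $\langle\tilde\alpha,\tilde\xi\rangle=\tfrac12\langle\alpha,\xi\rangle$ (while the shape-operator eigenvalues $\langle\lambda,\xi\rangle$ are unchanged), and it is exactly this factor $\tfrac12$ — which you do flag as the bookkeeping to be checked — that produces the $2$'s appearing in the denominators of the stated formula.
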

\begin{proof}
Set $\tilde{N} := \phi^{-1}(N)$. From Theorem \ref{thm1} (iii) it suffices to compute the principal curvatures of the PF submanifold $\Phi_{\Delta G}^{-1}(\tilde{N})$ of $V_{\mathfrak{g} \oplus \mathfrak{g}}$. For each $x \in \mathfrak{g}$ we denote by $\tilde{x} = (d \phi)^{-1}(x) = \frac{1}{2}(x, - x)$. Set $\tilde{t} = d\phi^{-1}(\mathfrak{t})$ and $\tilde{\Delta} =d \phi^{-1} (\Delta)$. The root space decomposition of $\mathfrak{m} = (\Delta \mathfrak{g})^\perp$ with respect to $\tilde{\mathfrak{t}}$ is given by 
\begin{equation*}
\mathfrak{m} = \mathfrak{m}_0 + \sum_{\tilde{\alpha} \in \tilde{\Delta}} \mathfrak{m}_{\tilde{\alpha}},
\end{equation*}
where
\begin{align*}
&
\mathfrak{m}_{0} = \{\tilde{y} \in \mathfrak{m} \mid \operatorname{ad}(\tilde{\eta})\tilde{y} = 0 \ \ \text{for all} \ \tilde{\eta} \in \tilde{\mathfrak{t}} \},
\\
&
\mathfrak{m}_{\tilde{\alpha}} = \{\tilde{y} \in \mathfrak{m} \mid \operatorname{ad}(\tilde{\eta})^2 \tilde{y}= - \langle \tilde{\alpha}, \tilde{\eta} \rangle^2 \tilde{y} \ \ \text{for all} \ \tilde{\eta} \in \tilde{\mathfrak{t}} \}.
\end{align*}
On the other hand the common eigenspace decomposition by the commuting shape operators $\{A^{\tilde{N}}_{\tilde{\xi}}\}_{\tilde{\xi} \in \tilde{\mathfrak{t}}}$ is 
\begin{equation*}
T_{(e,e)} \tilde{N} = \sum_{\mu \in \Gamma} S_{\mu},
\end{equation*}
where $\Gamma$ is a finite subset of $\tilde{\mathfrak{t}}$ and 
\begin{equation*}
S_{\mu} = \{\tilde{x} \in T_{(e,e)} \tilde{N} \mid A^{\tilde{N}}_{\tilde{\xi}} (\tilde{x}) = \langle \mu , \tilde{\xi} \rangle \tilde{x} \ \text{for all}\ \tilde{\xi} \in \tilde{\mathfrak{t}}\}.
\end{equation*}
 Since the eigenvalues of $A^N_\xi$ and $A^{\tilde{N}}_{\tilde{\xi}}$ coincide it follows that $\Gamma = \{2 \tilde{\lambda} \mid \lambda \in \Lambda\}$.
Thus the assertion follows from the general formula \cite[Theorem 5.2]{M4} together with $\langle \tilde{\alpha} , \tilde{\xi} \rangle = \frac{1}{2} \langle \alpha, \xi\rangle$ and $\langle 2 \tilde{\lambda} ,\tilde{\xi} \rangle = \langle \lambda, \xi \rangle$.
\end{proof}

Considering the case $N = \{e\}$ we obtain the formula for the principal curvatures of fibers of the parallel transport map $\Phi : V_\mathfrak{g} \rightarrow G$ (\cite[Theorem 4.11]{KT93}). Here we choose a maximal abelian subalgebra $\mathfrak{t}$ of $\mathfrak{g}$ and thus $\dim \mathfrak{g}_\alpha =2$.
\begin{cor}[King-Terng \cite{KT93}]\label{corKT}
The principal curvatures of the fiber $\Phi^{-1}(e)$ at $e \in G$ in the direction of $\hat{\xi} \in \hat{\mathfrak{t}}$ are given by 
\begin{equation*}
\{0\}
\cup
\left\{
\left.
\frac{\langle \alpha, \xi \rangle}{2 n \pi}
\ \right|\ 
\alpha \in \Delta^+ \backslash \Delta^+_\xi 
, \ 
n \in \mathbb{Z} \backslash \{0\}
\right\}.
\end{equation*}
The multiplicities are respectively given by 
\begin{equation*}
\infty,  \qquad 2.
\end{equation*}
\end{cor}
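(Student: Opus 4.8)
The plan is to deduce this statement directly from Theorem \ref{thm2} by specializing to the one-point submanifold $N = \{e\}$. First I would check that $N = \{e\}$ is $k$-curvature-adapted with $k = \operatorname{rank}\mathfrak{g}$: since $T_e N = \{0\}$, condition (i) in the definition of $k$-curvature-adapted is vacuous, and condition (ii) holds for the choice of \emph{any} maximal abelian subalgebra $\mathfrak{t}$ of $\mathfrak{g}$, because the prescribed family of endomorphisms of the zero space $T_e N$ is trivially commuting and the inclusion $v \in dl_e(\mathfrak{t}) \subset T^\perp_e N = \mathfrak{g}$ merely asks that $\xi \in \mathfrak{t}$. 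Hence Theorem \ref{thm2} applies with the given $\mathfrak{t}$, which we take to be a maximal abelian subalgebra; then $\mathfrak{g}_0 = \mathfrak{t}$ and $\mathfrak{g} = \mathfrak{t} + \sum_{\alpha \in \Delta^+}\mathfrak{g}_\alpha$ is the usual real root space decomposition with respect to $\mathfrak{t}$.

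Next I would feed $N = \{e\}$ into the data appearing in Theorem \ref{thm2}. Since $T_e N = \{0\}$, the commuting shape operators $\{A^N_\xi\}_{\xi \in \mathfrak{t}}$ act on the zero space, so the index set $\Lambda$ is empty; in particular $\Lambda_0 = \emptyset$, $\Lambda_\alpha = \emptyset$ for every $\alpha \in \Delta^+$, and $m(0,\lambda) = m(\alpha,\lambda) = 0$. On the other hand $T^\perp_e N = \mathfrak{g}$, so $\mathfrak{g}_\alpha \cap T^\perp_e N = \mathfrak{g}_\alpha \neq \{0\}$ for every $\alpha \in \Delta^+$, and $m(\alpha,\perp) = \dim(\mathfrak{g}_\alpha \cap T^\perp_e N) = \dim\mathfrak{g}_\alpha$. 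Substituting into the formula of Theorem \ref{thm2}: the second line of the set of principal curvatures (indexed over $\Lambda_0 \cup \bigcup_{\beta \in \Delta^+_\xi}\Lambda_\beta$) and the third line (indexed over the $\Lambda_\alpha$) are empty, the condition $\mathfrak{g}_\alpha \cap T^\perp_e N \neq \{0\}$ in the last line is automatic, and we are left with $\{0\}$ of multiplicity $\infty$ together with $\langle\alpha,\xi\rangle/(2n\pi)$ for $\alpha \in \Delta^+ \backslash \Delta^+_\xi$ and $n \in \mathbb{Z}\backslash\{0\}$, of multiplicity $\dim\mathfrak{g}_\alpha$. This is precisely the asserted list.

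The only point requiring care — and the closest thing to an obstacle — is the bookkeeping: one must verify that every contribution in the somewhat elaborate formula of Theorem \ref{thm2} which ought to vanish does vanish once $T_e N = \{0\}$, and that the surviving multiplicity is $\dim\mathfrak{g}_\alpha$ and not a sum involving the now-absent eigenspaces $S_\lambda$; the degenerate convention $\arctan\frac{\langle\alpha,\xi\rangle}{2\langle\lambda,\xi\rangle} := \frac{\pi}{2}$ plays no role here since there are no $\lambda$'s. No analytic or geometric difficulty remains. (Alternatively one could argue directly, describing $\Phi^{-1}(e)$ as the set of $u \in V_\mathfrak{g}$ with trivial holonomy $g_u(1) = e$ and computing its second fundamental form via the Maurer–Cartan equation, but this only reproduces the $N = \{e\}$ instance of the computation already established for Theorem \ref{thm2}.)
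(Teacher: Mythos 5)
Your proposal is correct and takes essentially the same route as the paper, which simply specializes Theorem \ref{thm2} to $N=\{e\}$; your bookkeeping (empty $\Lambda$, $T^\perp_eN=\mathfrak{g}$, surviving multiplicity $\dim\mathfrak{g}_\alpha$) fills in exactly the details the paper leaves implicit.
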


\section{Submanifold geometry of orbits of $\sigma$-actions} \label{pcosa}

In this section we study the submanifold geometry of orbits of $\sigma$-actions. The material is based on Ohno \cite{Ohno21} in the case of Hermann actions (see also \cite{M4}).

Let $G$ be a connected compact semisimple Lie group and $\sigma$ an automorphism of $G$. We choose an $\operatorname{Aut}(G)$-invariant inner product of $\mathfrak{g}$ and equip the corresponding bi-invariant Riemannian metric with $G$. 
Take a maximal abelian subalgebra $\mathfrak{t}$ of the fixed point algebra $\mathfrak{g}^\sigma$. The $\sigma$-action is the action of $G(\sigma) = \{(b, \sigma(b))\mid b \in G\}$ on $G$ and is hyperpolar where $\exp \mathfrak{t}$ is a section. 

Consider the root space decomposition 
\begin{equation*}
\mathfrak{g}^\mathbb{C} = \mathfrak{g}(0) + \sum_{\alpha \in \Delta} \mathfrak{g}(\alpha),
\end{equation*}
where 
\begin{align*}
\mathfrak{g}(0) &= \{z \in \mathfrak{g}^\mathbb{C} \mid \operatorname{ad}(z) = 0 \ \ \text{for all $\eta \in \mathfrak{t}$}\},
\\
\mathfrak{g}(\alpha) &= \{z \in \mathfrak{g}^\mathbb{C} \mid \operatorname{ad}(\eta)z = \sqrt{-1} \langle\alpha, \eta \rangle z \ \ \text{for all $\eta \in \mathfrak{t}$}\},
\end{align*}
and $\Delta = \Delta (\sigma)$ is a root system of $\mathfrak{t}$. The real form is 
\begin{equation*}
\mathfrak{g} = \mathfrak{g}_0 + \sum_{\alpha \in \Delta^+} \mathfrak{g}_\alpha,
\end{equation*}
where
\begin{equation*}
\mathfrak{g}_0
=
\mathfrak{g}(0) \cap \mathfrak{g}
, \qquad
\mathfrak{g}_\alpha = (\mathfrak{g}(\alpha) + \mathfrak{g}(- \alpha)) \cap \mathfrak{g}.
\end{equation*}
These are expressed as
\begin{align*}
\mathfrak{g}_0
&=
\{x \in \mathfrak{g} \mid \operatorname{ad}(\eta)x = 0 \ \text{ for all} \ \eta \in \mathfrak{t}  \},
\\
\mathfrak{g}_\alpha
&=
\{x \in \mathfrak{g} \mid  \operatorname{ad}(\eta)^2x = - \langle \alpha, \eta \rangle ^2 x \ \text{ for all} \ \eta \in \mathfrak{t} \}.
\end{align*}
We set $m(\alpha) := \dim \mathfrak{g}_\alpha$.

Consider the eigenspace decomposition of $\sigma$
\begin{equation*}
\mathfrak{g}^\mathbb{C}
=
\sum_{\epsilon \in U(1)} \mathfrak{g}(\epsilon),
\end{equation*}
where
\begin{equation*}
\mathfrak{g}(\epsilon)
=
\{z \in \mathfrak{g}^\mathbb{C} \mid \sigma (z) = \epsilon z\}.
\end{equation*}
Since $\sigma$ commutes with $\operatorname{ad}(\eta)$ we have 
\begin{equation*}
\mathfrak{g}^\mathbb{C}
=
\sum_{\epsilon \in U(1)} \mathfrak{g}(0, \epsilon)
+
\sum_{\alpha \in \Delta}
\sum_{\epsilon \in U(1)} 
\mathfrak{g}(\alpha, \epsilon),
\end{equation*}
where
\begin{equation*}
\mathfrak{g}(0, \epsilon) = \mathfrak{g}(0) \cap \mathfrak{g}(\epsilon)
, \qquad
\mathfrak{g}(\alpha, \epsilon) = \mathfrak{g}(\alpha) \cap \mathfrak{g}(\epsilon).
\end{equation*}
The real form is given by
\begin{equation} \label{decomp1}
\mathfrak{g}
=
\sum_{\epsilon \in U(1)_{\geq 0}} \mathfrak{g}_{0, \epsilon}
+
\sum_{\alpha \in \Delta^+}
\sum_{\epsilon \in U(1)} \mathfrak{g}_{\alpha, \epsilon},
\end{equation}
where $U(1)_{\geq 0} := \{\epsilon \in U(1) \mid \operatorname{Im}(\epsilon) \geq 0\}$ and
\begin{align*}
&
\mathfrak{g}_{0, \epsilon}
=
( \mathfrak{g}(0, \epsilon) + \mathfrak{g}(0, \epsilon^{-1}) \cap \mathfrak{g},
\\
&
\mathfrak{g}_{\alpha, \epsilon}
=
(\mathfrak{g}(\alpha, \epsilon) + \mathfrak{g}(- \alpha, \epsilon^{-1})) \cap \mathfrak{g}.
\end{align*}
We set $m(\alpha, \epsilon) =  \dim \mathfrak{g}_{\alpha, \epsilon}$.
\begin{prop}\label{props}
Let $w \in \mathfrak{g}$. Then the tangent space and the normal space of the orbit $N = G(\sigma) \cdot a$ through $a = \exp w$ are expressed as follows:
\begin{align}
\label{tangent1}
T_{a}N
&=
dl_{a}( \ \ 
\sum_{\substack{\epsilon  \in U(1)_{\geq 0} \\ \epsilon \neq 1}} \mathfrak{g}_{0, \epsilon} 
+
\sum_{\alpha \in \Delta^+}
\sum_{\substack{\epsilon \in U(1) \\ \langle \alpha, w  \rangle + \arg \epsilon \notin 2 \pi\mathbb{Z}}}
\mathfrak{g}_{\alpha, \epsilon}
\ \  ),
\\
T^\perp_{a}N
\label{normal1}
&=
dl_{a}(\ 
\qquad \mathfrak{t}  \qquad 
+ \quad 
\sum_{\alpha \in \Delta^+}
\sum_{\substack{\epsilon \in U(1)\\ \langle \alpha, w  \rangle + \arg \epsilon \in  2\pi\mathbb{Z}}}\mathfrak{g}_{\alpha, \epsilon}
\ \  ).
\end{align}
Moreover \eqref{tangent1} is the common eigenspace decomposition of the family of shape operators $\{A^N_{dl_a (\xi)}\}_{\xi \in \mathfrak{t}}$. In fact
\begin{align*}
dl_a (\mathfrak{g}_{0, \epsilon}): &\ \ \text{the eigenspace associated with the eigenvalue $0$},
\\
dl_a(\mathfrak{g}_{\alpha, \epsilon}): &\ \ \text{the eigenspace associated with the eigenvalue $\textstyle - \frac{\langle \alpha, \xi\rangle}{2} \cot \frac{\langle\alpha, w \rangle + \arg \epsilon}{2}$}.
\end{align*}
\end{prop}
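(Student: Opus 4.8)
The plan is to compute the tangent and normal spaces of the orbit $N = G(\sigma) \cdot a$ directly from the definition of the $\sigma$-action, then diagonalize the shape operators using the explicit description of the second fundamental form of group orbits. First I would identify the tangent space: since $G(\sigma) = \{(b,\sigma(b)) \mid b \in G\}$ has Lie algebra $\{(x,\sigma(x)) \mid x \in \mathfrak{g}\}$, differentiating the orbit map $b \mapsto b a \sigma(b)^{-1}$ at the identity gives the curve $t \mapsto \exp(tx) a \sigma(\exp(tx))^{-1} = \exp(tx) a \exp(-t\sigma(x))$, whose velocity at $t=0$ is $xa - a\sigma(x) = dl_a\bigl(\operatorname{Ad}(a^{-1})x - \sigma(x)\bigr)$. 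So $T_aN = dl_a\bigl((\operatorname{Ad}(a^{-1})\circ(\text{something}) - \sigma)(\mathfrak{g})\bigr)$; more precisely, writing $a = \exp w$ with $w \in \mathfrak{t} \subset \mathfrak{g}^\sigma$, we have $\operatorname{Ad}(a^{-1}) = \exp(-\operatorname{ad} w)$, which commutes with $\sigma$. Hence $T_aN = dl_a\bigl((\exp(-\operatorname{ad} w) - \sigma)(\mathfrak{g})\bigr)$. The key computation is then to decompose $\mathfrak{g}$ into the joint eigenspaces $\mathfrak{g}_{0,\epsilon}$ and $\mathfrak{g}_{\alpha,\epsilon}$ of $\operatorname{ad}(\eta)^2$ (for $\eta \in \mathfrak{t}$) and the operators $\sigma+\sigma^{-1}$, $\sigma-\sigma^{-1}$ (equivalently, to work over $\mathbb{C}$ with the joint eigenspaces $\mathfrak{g}(\alpha,\epsilon)$ of $\operatorname{ad}(\eta)$ and $\sigma$), and on each such piece to determine the kernel and image of $\exp(-\operatorname{ad} w) - \sigma$.

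On the complexified piece $\mathfrak{g}(\alpha,\epsilon)$, the operator $\exp(-\operatorname{ad} w)$ acts by the scalar $e^{-\langle \alpha, w\rangle}$... wait, more carefully: on $\mathfrak{g}(\alpha)$ the operator $\operatorname{ad}(w)$ acts by $\langle \alpha, w\rangle$ with the appropriate $i$ normalization from the real root space convention used in the paper (so that $\operatorname{ad}(\eta)^2$ has eigenvalue $-\langle\alpha,\eta\rangle^2$); thus $\operatorname{ad}(w)$ acts by $i\langle\alpha,w\rangle$ on $\mathfrak{g}(\alpha)$ and $\exp(-\operatorname{ad} w)$ acts by $e^{-i\langle\alpha,w\rangle}$, while $\sigma$ acts by $\epsilon$. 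So $\exp(-\operatorname{ad} w) - \sigma$ acts on $\mathfrak{g}(\alpha,\epsilon)$ as the scalar $e^{-i\langle\alpha,w\rangle} - \epsilon$, which vanishes precisely when $\langle\alpha,w\rangle + \arg\epsilon \in 2\pi\mathbb{Z}$ (after reconciling signs, since $\mathfrak{g}(\alpha,\epsilon)$ and $\mathfrak{g}(-\alpha,\epsilon^{-1})$ are conjugate and together give the real space $\mathfrak{g}_{\alpha,\epsilon}$). On $\mathfrak{g}(0,\epsilon)$ the operator reduces to $1 - \epsilon$, vanishing iff $\epsilon = 1$. Passing back to real forms, $\mathfrak{g}_{\alpha,\epsilon}$ lies in $T_aN$ when $\langle\alpha,w\rangle+\arg\epsilon\notin 2\pi\mathbb{Z}$ and in $T^\perp_aN$ otherwise, and $\mathfrak{g}_{0,\epsilon}\subset T_aN$ iff $\epsilon\neq 1$, with $\mathfrak{g}_{0,1}\cap\mathfrak{g} = \mathfrak{g}_0^\sigma$ splitting as $\mathfrak{t}$ (normal, giving the section direction) plus its complement in $\mathfrak{g}_0^\sigma$; but since $\mathfrak{t}$ is maximal abelian in $\mathfrak{g}^\sigma$ and $\mathfrak{g}_0$ is the centralizer of $\mathfrak{t}$, one checks $\mathfrak{g}_0^\sigma = \mathfrak{t}$, so the whole of $\mathfrak{g}_{0,1}$-real-part is normal. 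This yields \eqref{tangent1} and \eqref{normal1}.

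For the shape operator statement, I would use the standard formula for the second fundamental form of an orbit of an isometric action on a Riemannian manifold: for $\xi \in T^\perp_aN$ and a Killing field $X$ generating a tangent vector $X_a = dl_a(\hat{x})$ at $a$, one has $\langle A^N_\xi X_a, X_a\rangle = -\langle \nabla_{X_a} X, \xi\rangle$, and on a bi-invariant-metric Lie group this can be made fully explicit: for $a=\exp w$ with $w$ in a section, the orbit is "polar-like" and the shape operator in the direction $dl_a(\hat{\xi})$, $\xi\in\mathfrak{t}$, is computed via $\operatorname{ad}$ and the exponential, giving on $dl_a(\mathfrak{g}_{\alpha,\epsilon})$ the eigenvalue $-\tfrac{\langle\alpha,\xi\rangle}{2}\cot\tfrac{\langle\alpha,w\rangle+\arg\epsilon}{2}$. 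Concretely: the normal geodesic $s\mapsto\exp(w+s\xi)$ and the adapted Jacobi-field analysis along it (exactly as in the Hermann-action case treated in \cite{M4}, or by direct computation as in Terng's and Koike's work) produces this $\cot$ expression, with the $\arg\epsilon$ shift arising from the $\sigma$-twist in the boundary condition $(b,\sigma(b))$; the eigenvalue $0$ on $dl_a(\mathfrak{g}_{0,\epsilon})$ is immediate since those directions are tangent to the orbit of the centralizer and are totally geodesic flat directions. The main obstacle will be bookkeeping the sign and branch conventions — reconciling the real root-space normalization $\operatorname{ad}(\eta)^2 = -\langle\alpha,\eta\rangle^2$ with the complex eigenvalue $\epsilon$ of $\sigma$ and the choice of $\arg\epsilon \in (-\pi,\pi]$, and verifying that the pairing of $\mathfrak{g}(\alpha,\epsilon)$ with $\mathfrak{g}(-\alpha,\epsilon^{-1})$ under complex conjugation produces a well-defined real eigenspace $\mathfrak{g}_{\alpha,\epsilon}$ on which the $\cot$ formula is unambiguous (i.e. that swapping $\alpha\to-\alpha$, $\epsilon\to\epsilon^{-1}$ leaves $\cot\tfrac{\langle\alpha,w\rangle+\arg\epsilon}{2}$ invariant up to the sign absorbed by $\langle\alpha,\xi\rangle\to-\langle\alpha,\xi\rangle$). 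Once the conventions are pinned down, each step is a short linear-algebra verification on the individual root spaces, and the curvature-adapted property (already noted in the paper for $\sigma$-action orbits) guarantees that all the $A^N_\xi$ are simultaneously diagonalized by this decomposition.
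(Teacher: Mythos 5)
Your proposal is essentially correct, but it takes a genuinely different route from the paper. The paper does not compute anything directly on $G$: it uses the canonical isomorphism of Section \ref{paral} to view $N$ as the orbit of the Hermann action of $G(\sigma)$ on $(G\times G)/\Delta G$, matches the decomposition $\mathfrak{g}=\sum\mathfrak{g}_{0,\epsilon}+\sum\mathfrak{g}_{\alpha,\epsilon}$ with the corresponding decomposition of $\mathfrak{m}=(\Delta\mathfrak{g})^{\perp}$ coming from the two involutions $\theta_1,\theta_2$ of $G\times G$, and then simply cites the general theory of orbits of Hermann actions (Ohno, and Section 3 of \cite{M4}), keeping track of the factor $\langle\tilde\alpha,\tilde\xi\rangle=\tfrac12\langle\alpha,\xi\rangle$. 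You instead differentiate the orbit map on $G$ itself, obtaining $T_aN=dl_a\bigl(\operatorname{im}(\operatorname{Ad}(a^{-1})-\sigma)\bigr)$, and diagonalize the commuting orthogonal operators $\operatorname{Ad}(a^{-1})=\exp(-\operatorname{ad}w)$ and $\sigma$ on the complexified joint eigenspaces; since $\operatorname{Ad}(a^{-1})-\sigma$ is normal, its image is the orthogonal complement of its kernel, which immediately yields \eqref{tangent1} and \eqref{normal1} (including $\mathfrak{g}_{0,1}=\mathfrak{g}_0\cap\mathfrak{g}^{\sigma}=\mathfrak{t}$, as you correctly argue from maximality of $\mathfrak{t}$ in $\mathfrak{g}^{\sigma}$). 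This is more elementary and self-contained for the tangent/normal statement, and your sign bookkeeping (invariance of $-\tfrac{\langle\alpha,\xi\rangle}{2}\cot\tfrac{\langle\alpha,w\rangle+\arg\epsilon}{2}$ under $\alpha\mapsto-\alpha$, $\epsilon\mapsto\epsilon^{-1}$) is exactly the check needed for the real forms to make sense. The one place where your sketch is materially thinner than the paper's argument is the shape-operator eigenvalues: you assert that a Killing-field/Jacobi-field computation ``as in the Hermann-action case'' produces the cotangent formula, but you never set up the correspondence that would let you quote those results verbatim, so in your approach this step is an actual computation still to be carried out (it can be done, e.g.\ via the bi-invariant Levi--Civita connection and the Killing fields $x\mapsto xa-a\sigma(x)$, as in Ikawa's work on $\sigma$-actions), whereas in the paper it is a legitimate citation after the reduction. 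What each approach buys: yours avoids the doubling construction entirely and exhibits the operator $\exp(-\operatorname{ad}w)-\sigma$ explicitly; the paper's makes every assertion, including the second-fundamental-form eigenvalues, an instance of already-proved Hermann-action statements.
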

\begin{proof}
Recall that $K := \Delta G$ and $H := G(\sigma)$ are symmetric subgroups of $U := G \times G$ with involution $\theta_K : (b,c) \mapsto (c,b)$ and $\theta_H: (b,c) \mapsto (\sigma^{-1}(c), \sigma(b))$ respectively. Their canonical decompositions are respectively given by 
\begin{equation*}
\mathfrak{u} = \mathfrak{k} + \mathfrak{m}
\quad \text{and} \quad
\mathfrak{u} = \mathfrak{h} + \mathfrak{p},
\end{equation*}
where $\mathfrak{u} = \mathfrak{g} \oplus \mathfrak{g}$, $\mathfrak{k} = \Delta \mathfrak{g}$, $\mathfrak{m} = (\Delta \mathfrak{g})^\perp$, $\mathfrak{h} = \{(x, \sigma(x)) \mid x \in \mathfrak{g}\}$ and $\mathfrak{p} = \{(x, - \sigma(x)) \mid x \in \mathfrak{g}\}$. For each $x \in \mathfrak{g}$ we set $\tilde{x} = (d \phi)^{-1}(x) = \frac{1}{2}(x, -x)$. Note that $\tilde{\mathfrak{t}} = (d \phi)^{-1}(\mathfrak{t})$ is a maximal abelian subspace in $\mathfrak{m} \cap \mathfrak{p}$. Set $\tilde{\Delta} = d\phi^{-1} (\Delta)$. The root space decomposition of $\mathfrak{u}$ with respect to $\tilde{\mathfrak{t}}$ is given by
\begin{equation*}
\mathfrak{u} = \mathfrak{u}(0) + \sum_{\tilde{\alpha} \in \tilde{\Delta}} \mathfrak{u}(\tilde{\alpha}),
\end{equation*}
where
\begin{align*}
\mathfrak{u}(0) &= \{(z_1, z_2) \in \mathfrak{u}^\mathbb{C} \mid \operatorname{ad}(\tilde{\eta}) (z_1, z_2) = 0 \ \ \text{for all $\tilde{\eta} \in \tilde{\mathfrak{t}}$}\},
\\
\mathfrak{u}(\tilde{\alpha}) &= \{(z_1, z_2) \in \mathfrak{u}^\mathbb{C} \mid \operatorname{ad}(\tilde{\eta}) (z_1, z_2) = \langle \tilde{\alpha}, \tilde{\eta} \rangle (z_1, z_2) \ \ \text{for all $\tilde{\eta} \in \tilde{\mathfrak{t}}$}\}.
\end{align*}
Clearly $\mathfrak{u}(0) = \mathfrak{g}(0) \oplus \mathfrak{g}(0)$ and $\mathfrak{u}(\tilde{\alpha}) = \mathfrak{g}(\alpha) \oplus \mathfrak{g}(\alpha)$. Consider the eigenspace decomposition of the composition $\theta_K \circ \theta_H: (b,c) \mapsto (\sigma(b), \sigma^{-1}(c))$: 
\begin{equation*}
\mathfrak{u}^\mathbb{C}
= 
\sum_{\epsilon \in U(1)} \mathfrak{u}(\epsilon),
\end{equation*}
where
\begin{equation*}
\mathfrak{u}(\epsilon) = \{(z_1, z_2) \in \mathfrak{u}^\mathbb{C} \mid \theta_K \circ \theta_H (z_1, z_2) = \epsilon (z_1, z_2)\}.
\end{equation*}
Clearly $\mathfrak{u}(\epsilon) = \mathfrak{g}(\epsilon) \oplus \mathfrak{g}(\epsilon^{-1})$. Similarly to \eqref{decomp1} we have
\begin{equation*}
\mathfrak{u}
=
\sum_{\epsilon \in U(1)_{\geq 0}} \mathfrak{u}_{0, \epsilon}
+
\sum_{\tilde{\alpha} \in \tilde{\Delta}^+}
\sum_{\epsilon \in U(1)} \mathfrak{u}_{\tilde{\alpha}, \epsilon}
\end{equation*}
and therefore we get
\begin{equation*}
\mathfrak{m}
=
\sum_{\epsilon \in U(1)_{\geq 0}} \mathfrak{m}_{0, \epsilon}
+
\sum_{\tilde{\alpha} \in \tilde{\Delta}^+}
\sum_{\epsilon \in U(1)} \mathfrak{m}_{\tilde{\alpha}, \epsilon},
\end{equation*}
where $\mathfrak{m}_{0, \epsilon} = \mathfrak{u}_{0, \epsilon} \cap \mathfrak{m}$ and $\mathfrak{m}_{\tilde{\alpha}, \epsilon} = \mathfrak{u}_{\tilde{\alpha}, \epsilon} \cap \mathfrak{m}$. It is easy to see that $\mathfrak{m}_{0, \epsilon} = \{(x, -x) \mid x \in \mathfrak{g}_{0, \epsilon}\}$ and $\mathfrak{m}_{\tilde{\alpha}, \epsilon} = \{(x, - x) \mid x \in \mathfrak{g}_{\tilde{\alpha}, \epsilon}\}$. Thus $d \phi(\mathfrak{m}_{0, \epsilon}) = \mathfrak{g}_{0, \epsilon}$ and $\phi(\mathfrak{m}_{\tilde{\alpha}, \epsilon}) = \mathfrak{g}_{\alpha, \epsilon}$. Hence the assertion follows from the results of Hermann actions (\cite[p.\ 12]{Ohno21}, \cite[Section 3]{M4}) together with $\langle \tilde{\alpha} , \tilde{\xi} \rangle = \frac{1}{2} \langle \alpha,\xi \rangle$ and $\langle \tilde{\alpha} , \tilde{w} \rangle = \frac{1}{2} \langle \alpha, w \rangle$.
\end{proof}

If $\sigma$ is involutive then we have the $(\pm 1)$-eigenspace decomposition
\begin{equation*}
\mathfrak{g} = \mathfrak{g}^+ + \mathfrak{g}^-,
\end{equation*}
where $\mathfrak{g}^{\pm} = \{x \in \mathfrak{g} \mid \sigma(x) = \pm x\}$. We set
\begin{equation*}
\mathfrak{g}_\alpha^+ := \mathfrak{g}_\alpha \cap \mathfrak{g}^+
\quad \text{and} \quad 
\mathfrak{g}_\alpha^- := \mathfrak{g}_\alpha \cap \mathfrak{g}^-.
\end{equation*}
Since $\mathfrak{g}_{\alpha, 1} = \mathfrak{g}_\alpha^+$ and $\mathfrak{g}_{\alpha, -1} = \mathfrak{g}_\alpha^-$ we obtain:
\begin{cor}\label{cor5.2}
Suppose that $\sigma^2 = \operatorname{id}$. Then the tangent space and the normal space of the orbit $N = G(\sigma) \cdot a$ through $a = \exp w$ are expressed as follows:
\begin{align}
\label{tangent2}
T_{a}N
& =
dl_{a}( \ \ 
\quad \mathfrak{g}_0^- \quad
+
\sum_{\substack{\alpha \in \Delta^+ \\ \langle \alpha, w \rangle \notin 2 \pi \mathbb{Z}}}
\mathfrak{g}_{\alpha}^+
+
\sum_{\substack{\alpha \in \Delta^+ \\ \langle \alpha, w \rangle + \pi \notin 2 \pi \mathbb{Z}}}
\mathfrak{g}_{\alpha}^-
\ \  ),
\\
T^\perp_{a}N
\label{normal2}
&=
dl_{a}(\ 
\qquad \mathfrak{t}  \qquad 
+ \quad 
\sum_{\substack{\alpha \in \Delta^+ \\ \langle \alpha, w \rangle \in 2 \pi \mathbb{Z}}}
\mathfrak{g}_{\alpha}^+
+
\sum_{\substack{\alpha \in \Delta^+ \\ \langle \alpha, w \rangle + \pi \in 2 \pi \mathbb{Z}}}
\mathfrak{g}_{\alpha}^-
\ \  ).
\end{align}
Moreover \eqref{tangent2} is the common eigenspace decomposition of $\{A^N_{dl_a(\xi)}\}_{\xi \in \mathfrak{t}}$. In fact
\begin{align*}
dl_a (\mathfrak{g}_{0}^-): &\ \ \text{the eigenspace associated with the eigenvalue $0$},
\\
dl_a(\mathfrak{g}_{\alpha}^+): &\ \ \text{the eigenspace associated with the eigenvalue $\textstyle - \frac{\langle \alpha, \xi\rangle}{2} \cot \frac{\langle\alpha, w \rangle}{2}$},
\\
dl_a(\mathfrak{g}_{\alpha}^-): &\ \ \text{the eigenspace associated with the eigenvalue $\textstyle \frac{\langle \alpha, \xi\rangle}{2} \tan \frac{\langle\alpha, w \rangle}{2}$}.
\end{align*}
\end{cor}

\begin{cor}
Suppose that $\sigma = \operatorname{id}$. Then the tangent space and the normal space of the orbit $N = \Delta G \cdot a$ through $a = \exp w$ are expressed as follows:
\begin{align}
\label{tangent3}
T_{a}N
& =
dl_{a}( \ \ 
\mathfrak{g}_0 
+
\sum_{\substack{\alpha \in \Delta^+ \\ \langle \alpha, w \rangle \notin 2 \pi \mathbb{Z}}}
\mathfrak{g}_{\alpha}
\ \  ),
\\
T^\perp_{a}N
\label{normal3}
&=
dl_{a}(\ 
\qquad \mathfrak{t}  \qquad 
+ \quad 
\sum_{\substack{\alpha \in \Delta^+ \\ \langle \alpha, w \rangle \in 2 \pi \mathbb{Z}}}
\mathfrak{g}_{\alpha}
\ \  ).
\end{align}
Moreover \eqref{tangent3} is the common eigenspace decomposition of $\{A^N_{dl_a(\xi)}\}_{\xi \in \mathfrak{t}}$. In fact
\begin{align*}
dl_a (\mathfrak{g}_{0}): &\ \ \text{the eigenspace associated with the eigenvalue $0$},
\\
dl_a(\mathfrak{g}_{\alpha}): &\ \ \text{the eigenspace associated with the eigenvalue $\textstyle - \frac{\langle \alpha, \xi\rangle}{2} \cot \frac{\langle\alpha, w \rangle}{2}$}.
\end{align*}
\end{cor}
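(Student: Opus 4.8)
The plan is to obtain this corollary as the specialization of the preceding corollary (the involutive case) to the subcase where the involution $\sigma$ is the identity map, and to observe that the $\sigma$-action for $\sigma = \operatorname{id}$ is precisely the conjugation action of $\Delta G$ on $G$. First I would note that when $\sigma = \operatorname{id}$ the involutive eigenspace decomposition $\mathfrak{g} = \mathfrak{g}^+ \oplus \mathfrak{g}^-$ degenerates: $\mathfrak{g}^+ = \mathfrak{g}$ and $\mathfrak{g}^- = \{0\}$. Consequently $\mathfrak{g}_\alpha^+ = \mathfrak{g}_\alpha$ and $\mathfrak{g}_\alpha^- = \{0\}$ for every $\alpha$ (including $\alpha = 0$, so $\mathfrak{g}_0^- = \{0\}$ and $\mathfrak{g}_0^+ = \mathfrak{g}_0$). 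Substituting these identities into \eqref{tangent2} and \eqref{normal2} makes every $\mathfrak{g}^-$-summand vanish, every $\mathfrak{g}^+$-summand become the full $\mathfrak{g}_\alpha$, and collapses the conditions $\langle\alpha,w\rangle \notin 2\pi\mathbb{Z}$ (resp.\ $\in 2\pi\mathbb{Z}$) to exactly the conditions appearing in \eqref{tangent3} and \eqref{normal3}.

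Next I would translate the eigenvalue statement the same way: the eigenvalue $0$ is now carried by $dl_a(\mathfrak{g}_0)$ (the former $dl_a(\mathfrak{g}_0^-)$ piece being trivial, and $\mathfrak{g}_0^+ = \mathfrak{g}_0$ contributing the kernel of $A^N_\xi$ since $\cot$ does not appear for the $0$-root), the eigenvalue $-\frac{\langle\alpha,\xi\rangle}{2}\cot\frac{\langle\alpha,w\rangle}{2}$ is carried by $dl_a(\mathfrak{g}_\alpha^+) = dl_a(\mathfrak{g}_\alpha)$, and the $\tan$-eigenvalue on $dl_a(\mathfrak{g}_\alpha^-)$ disappears because that space is zero. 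This yields exactly the asserted description of the simultaneous eigenspace decomposition of $\{A^N_\xi\}_{\xi \in \mathfrak{t}}$. Strictly speaking one should also check that $\exp\mathfrak{t}$ remains a section and that $\mathfrak{t}$ is maximal abelian in $\mathfrak{g}^\sigma = \mathfrak{g}$, i.e.\ that $\mathfrak{t}$ is a maximal torus; this is consistent with the setup in Section~\ref{pcosa}, where $\mathfrak{t}$ is chosen maximal abelian in $\mathfrak{g}^\sigma$, which for $\sigma = \operatorname{id}$ is all of $\mathfrak{g}$, so $\Delta := \Delta^+\cup(-\Delta^+)$ is the usual root system of $\mathfrak{g}$ and $\mathfrak{g}_0 = \mathfrak{t}$ in fact equals the centralizer of the maximal torus (here semisimplicity gives $\mathfrak{g}_0 = \mathfrak{t}$).

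Alternatively — and I would mention this as a cross-check rather than carry it out — one can derive the same formulas directly from Proposition~\ref{props}: for $\sigma = \operatorname{id}$ the eigenspace decomposition of $\sigma$ is concentrated at $\epsilon = 1$, so $\mathfrak{g}_{0,\epsilon} = 0$ unless $\epsilon = 1$ (whence $\mathfrak{g}_{0,1} = \mathfrak{g}_0$), and $\mathfrak{g}_{\alpha,\epsilon} = 0$ unless $\epsilon = 1$ (whence $\mathfrak{g}_{\alpha,1} = \mathfrak{g}_\alpha$), and $\arg\epsilon = 0$. The summation conditions $\langle\alpha,w\rangle + \arg\epsilon \notin 2\pi\mathbb{Z}$ in \eqref{tangent1} then reduce to $\langle\alpha,w\rangle\notin 2\pi\mathbb{Z}$, the $\epsilon\neq 1$ condition in the $\mathfrak{g}_{0,\epsilon}$-sum kills that whole summand, and the eigenvalue $-\frac{\langle\alpha,\xi\rangle}{2}\cot\frac{\langle\alpha,w\rangle+\arg\epsilon}{2}$ becomes $-\frac{\langle\alpha,\xi\rangle}{2}\cot\frac{\langle\alpha,w\rangle}{2}$. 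I do not anticipate any genuine obstacle here: the entire content is a bookkeeping specialization of results already established, the only point requiring a word of care being the degenerate behavior at the zero root (there is no ``$\cot$ at $0$'' term, and $\mathfrak{g}_0^-$ — equivalently the $\epsilon\neq 1$ part of $\mathfrak{g}_{0,\epsilon}$ — vanishes identically), which I would state explicitly so the eigenvalue-$0$ eigenspace is correctly identified as $dl_a(\mathfrak{g}_0)$.
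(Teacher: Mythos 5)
Your strategy --- obtaining the corollary by specializing the involutive corollary (equivalently Proposition \ref{props}) to $\sigma = \operatorname{id}$ --- is exactly the intended route; the paper offers no separate argument for this statement. The substitutions $\mathfrak{g}^+=\mathfrak{g}$, $\mathfrak{g}^-=\{0\}$, hence $\mathfrak{g}_\alpha^+=\mathfrak{g}_\alpha$ and $\mathfrak{g}_\alpha^-=\{0\}$, the collapse of the root conditions, and the disappearance of the $\tan$-eigenvalues are all correct, as is your observation that $\mathfrak{t}$ is now a maximal torus and that semisimplicity gives $\mathfrak{g}_0=\mathfrak{t}$.

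There is, however, one step that does not hold as you wrote it: the claim that the eigenvalue $0$ is ``carried by $dl_a(\mathfrak{g}_0)$'' because ``$\mathfrak{g}_0^+=\mathfrak{g}_0$ contributes the kernel of $A^N_\xi$.'' In the involutive corollary the zero-root contribution to the tangent space is $\mathfrak{g}_0^-$, which vanishes for $\sigma=\operatorname{id}$, while $\mathfrak{g}_0^+=\mathfrak{g}_0=\mathfrak{t}$ lies in the \emph{normal} space by \eqref{normal2}; your own cross-check via Proposition \ref{props} says the same thing, since the $\mathfrak{g}_{0,\epsilon}$-sum in \eqref{tangent1} runs over $\epsilon\neq 1$ and is therefore empty when $\sigma=\operatorname{id}$. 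So the specialization actually yields $T_aN = dl_a\bigl(\sum_{\langle\alpha,w\rangle\notin 2\pi\mathbb{Z}}\mathfrak{g}_\alpha\bigr)$ with no $\mathfrak{g}_0$-summand, and a subspace of $T_a^\perp N$ cannot simultaneously be an eigenspace of the shape operators acting on $T_aN$. The $\mathfrak{g}_0$ printed in \eqref{tangent3} is vacuous at best (indeed literally inconsistent with \eqref{normal3}, since $\mathfrak{g}_0=\mathfrak{t}$ appears there as well); the $0$-eigenspace of $A^N_\xi$ inside $T_aN$ is instead the sum of those $dl_a(\mathfrak{g}_\alpha)$ with $\langle\alpha,w\rangle\notin 2\pi\mathbb{Z}$ for which $\langle\alpha,\xi\rangle\cot\frac{\langle\alpha,w\rangle}{2}=0$. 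You should state the corrected tangent space rather than manufacture a justification for the spurious $\mathfrak{g}_0$ term.
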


%%%%%%
\section{Principal curvatures of orbits of $P(G, G(\sigma))$-actions} \label{pco}

In this section we derive an explicit formula for the principal curvatures of orbits of $P(G, G (\sigma))$-actions.

As in the last section we let $G$ be a connected compact semisimple Lie group with a bi-invariant metric induced from an $\operatorname{Aut}(G)$-invariant inner product of $\mathfrak{g}$ and $\sigma$ an automorphism of $G$. Take a maximal abelian subalgebra $\mathfrak{t}$ of $\mathfrak{g}^\sigma$. Then $\exp \mathfrak{t}$ is a section of the $\sigma$-action and $\hat{\mathfrak{t}} = \{\hat{x} \mid x \in \mathfrak{t}\}$ is a section of the $P(G, G(\sigma))$-action where $\hat{x}$ denotes the constant path with value $x \in \mathfrak{g}$ (\cite[Theorem 1.2]{Ter95}). Take $w \in \mathfrak{t}$ and set
\begin{align*}
U(1)_\alpha^\top &= \{\epsilon \in U(1) \mid \mathfrak{g}_{\alpha, \epsilon} \neq \{0\}, \ \langle \alpha, w \rangle+ \arg \epsilon  \notin 2 \pi \mathbb{Z}\},
\\
U(1)_\alpha^\perp &= \{\epsilon \in U(1) \mid \mathfrak{g}_{\alpha, \epsilon} \neq \{0\}, \ \langle \alpha, w \rangle + \arg \epsilon  \in 2 \pi \mathbb{Z}\}.
\end{align*}

\begin{thm}\label{thm3}
The principal curvatures of $P(G, G(\sigma)) * \hat{w}$ in the direction of $\hat{\xi} \in \hat{\mathfrak{t}}$ are given by
\begin{align*}
\{0\} 
& \cup
\left\{
\left.
\frac{\langle \alpha, \xi \rangle}{- \langle \alpha, w \rangle - \arg \epsilon + 2 m \pi}
 \ \right| \ 
\alpha \in \Delta^+ \backslash \Delta^+_\xi
, \ 
\epsilon \in U(1)_\alpha^\top
, \ 
m \in \mathbb{Z}
\right\}
\\
& \cup 
\left\{
\left.\frac{\langle \alpha, \xi \rangle}{2 n \pi} 
\ \right| \ 
\alpha \in \Delta^+\backslash \Delta^+_\xi
\textup{ satisfying }
U(1)_\alpha^\perp \neq \emptyset
, \ 
n \in \mathbb{Z} \backslash \{0\}
\right\}.
\end{align*}
The multiplicities are respectively given by
\begin{equation*}
\infty
, \qquad
m(\alpha, \epsilon)
, \qquad
\sum_{\epsilon \in U(1)^\perp_\alpha} m(\alpha, \epsilon).
\end{equation*}
If the orbit is principal then the term $\frac{\langle \alpha, \xi \rangle}{2 n \pi }$ vanishes.
\end{thm}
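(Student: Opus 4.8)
The plan is to apply Theorem~\ref{thm2} to the orbit $N=G(\sigma)\cdot\exp w$ and feed into it the eigenvalue data supplied by Proposition~\ref{props}. Since $\exp\mathfrak{t}$ is a section of the $\sigma$-action, the cohomogeneity is $k:=\dim\mathfrak{t}$, so $N$ is $k$-curvature-adapted and Theorem~\ref{thm2} applies with the chosen maximal abelian $\mathfrak{t}\subset\mathfrak{g}^\sigma$; by Proposition~\ref{props} we have $dl_a(\mathfrak{t})\subset T^\perp_aN$ and the required commuting family condition holds at $a=\exp w$. (Theorem~\ref{thm2} is stated for $N$ through $e$ only in order to identify $T_eG$ with $\mathfrak{g}$; the identical argument---equivalently, the same reduction via Theorem~\ref{thm1} to \cite[Theorem~5.2]{M4} applied to the Hermann orbit $(G(\sigma)\times\Delta G)\cdot\operatorname{Exp}\tilde{w}$ with $\tilde{w}=(d\varphi)^{-1}(w)\in\tilde{\mathfrak{t}}$---gives the formula at $\exp w$, the real root space decomposition of $\mathfrak{g}$ with respect to $\mathfrak{t}$ being unchanged since $w\in\mathfrak{t}$.)

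Next I would match the combinatorial data. Identifying $T_aN$ and $T^\perp_aN$ with subspaces of $\mathfrak{g}$ through $dl_a^{-1}$, Proposition~\ref{props} gives $T_aN=\sum_{\epsilon\in U(1)_{\geq0},\,\epsilon\neq1}\mathfrak{g}_{0,\epsilon}+\sum_{\alpha\in\Delta^+}\sum_{\epsilon\in U(1)_\alpha^\top}\mathfrak{g}_{\alpha,\epsilon}$ with $A^N_\xi$ acting as $0$ on $\mathfrak{g}_{0,\epsilon}$ and as $-\tfrac{\langle\alpha,\xi\rangle}{2}\cot\tfrac{\langle\alpha,w\rangle+\arg\epsilon}{2}$ on $\mathfrak{g}_{\alpha,\epsilon}$, and $T^\perp_aN=\mathfrak{t}+\sum_{\alpha\in\Delta^+}\sum_{\epsilon\in U(1)_\alpha^\perp}\mathfrak{g}_{\alpha,\epsilon}$ (using here that a maximal abelian subalgebra of $\mathfrak{g}^\sigma$ is self-centralizing, so that the $\mathfrak{g}_0$-part of $T^\perp_aN$ is exactly $\mathfrak{t}$). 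Comparing with the decompositions of $T_aN$ and $T^\perp_aN$ preceding Theorem~\ref{thm2}, this shows $\Lambda_0\subseteq\{0\}$; that each $\Lambda_\alpha$ consists of the $\lambda\in\mathfrak{t}$ with $\langle\lambda,\xi\rangle=-\tfrac{\langle\alpha,\xi\rangle}{2}\cot\tfrac{\langle\alpha,w\rangle+\arg\epsilon}{2}$ for some $\epsilon\in U(1)_\alpha^\top$, with $m(\alpha,\lambda)$ equal to the sum of the $m(\alpha,\epsilon)$ over those $\epsilon$ giving that $\lambda$; and that $\mathfrak{g}_\alpha\cap T^\perp_aN\neq\{0\}$ precisely when $U(1)_\alpha^\perp\neq\emptyset$, in which case $m(\alpha,\perp)=\sum_{\epsilon\in U(1)_\alpha^\perp}m(\alpha,\epsilon)$. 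In particular every curvature of the form $\langle\lambda,\xi\rangle$ with $\lambda\in\Lambda_0\cup\bigcup_{\beta\in\Delta^+_\xi}\Lambda_\beta$ vanishes (for $\beta\in\Delta^+_\xi$ the $\cot$-term is multiplied by $\langle\beta,\xi\rangle=0$), so these are absorbed into the zero eigenvalue of infinite multiplicity.

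It then remains to simplify the curvature $\dfrac{\langle\alpha,\xi\rangle}{2\arctan\frac{\langle\alpha,\xi\rangle}{2\langle\lambda,\xi\rangle}+2m\pi}$ of Theorem~\ref{thm2} for $\lambda\in\Lambda_\alpha$ corresponding to $\epsilon\in U(1)_\alpha^\top$. Writing $\theta:=\tfrac{\langle\alpha,w\rangle+\arg\epsilon}{2}$, one has $\frac{\langle\alpha,\xi\rangle}{2\langle\lambda,\xi\rangle}=-\tan\theta$, hence $\arctan\frac{\langle\alpha,\xi\rangle}{2\langle\lambda,\xi\rangle}\equiv-\theta\pmod{\pi}$ (the convention $\arctan:=\pi/2$ when $\langle\lambda,\xi\rangle=0$ matching the case $\cot\theta=0$); doubling and letting $m$ range over $\mathbb{Z}$, the denominators run exactly over $\{-\langle\alpha,w\rangle-\arg\epsilon+2m\pi:m\in\mathbb{Z}\}$, which yields the asserted term with multiplicity $m(\alpha,\epsilon)$. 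The term $\frac{\langle\alpha,\xi\rangle}{2n\pi}$ with multiplicity $\sum_{\epsilon\in U(1)_\alpha^\perp}m(\alpha,\epsilon)$ comes directly from the last line of Theorem~\ref{thm2} together with the identifications above. Finally, if the orbit is principal then $T^\perp_aN=dl_a(\mathfrak{t})$, so by the normal space formula of Proposition~\ref{props} every $U(1)_\alpha^\perp$ is empty and the term $\frac{\langle\alpha,\xi\rangle}{2n\pi}$ disappears. There is no serious obstacle here: the real content is the trigonometric rewriting $\arctan(-\tan\theta)\equiv-\theta$ and the careful bookkeeping of how the $\epsilon$-indexed $\sigma$-data refines the $\lambda$-indexed shape-operator data of Theorem~\ref{thm2}, with multiplicities added whenever distinct $\epsilon$ (or distinct $\alpha$) produce the same principal curvature.
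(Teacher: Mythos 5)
Your proposal is correct, and it is precisely the alternative route the paper itself flags in a parenthetical remark at the end of its proof (``the assertion also follows by applying Proposition~\ref{props} to Theorem~\ref{thm2}''). The paper's primary argument is shorter: after identifying the $P(G,G(\sigma))$-action with the $P(G\times G, G(\sigma)\times\Delta G)$-action via Theorem~\ref{thm1}, it observes that the $G(\sigma)$-action on $(G\times G)/\Delta G$ is a Hermann action and simply quotes the ready-made orbit formula \cite[Theorem~6.1]{M4} together with $\langle\tilde\alpha,\tilde\xi\rangle=\tfrac12\langle\alpha,\xi\rangle$. You instead derive the formula from the general curvature-adapted statement (Theorem~\ref{thm2}) by feeding in the shape-operator eigenvalue data of Proposition~\ref{props} and carrying out the trigonometric rewriting $\arctan(-\tan\theta)\equiv-\theta\pmod{\pi}$ with $\theta=\tfrac12(\langle\alpha,w\rangle+\arg\epsilon)$; this correctly reproduces the denominators $-\langle\alpha,w\rangle-\arg\epsilon+2m\pi$, and your bookkeeping of how the $\epsilon$-indexed data refines the $\lambda$-indexed data (including $\Lambda_0\subseteq\{0\}$, the absorption of the $\Delta^+_\xi$-terms into the zero eigenvalue, the identification $m(\alpha,\perp)=\sum_{\epsilon\in U(1)^\perp_\alpha}m(\alpha,\epsilon)$, and the treatment of the $\arctan=\pi/2$ convention) is accurate. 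Your remark about extending Theorem~\ref{thm2} from base point $e$ to $\exp w$ is also the right thing to say, since the underlying \cite[Theorem~5.2]{M4} is formulated at a general point of the section. What the paper's route buys is brevity; what yours buys is a self-contained verification within the results stated in this paper, making explicit the computation that the citation of \cite[Theorem~6.1]{M4} hides.
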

\begin{proof}
By Corollary \ref{cor3.1} the $P(G, G(\sigma))$-action on $V_\mathfrak{g}$ is conjugate the $P(G \times G, G(\sigma) \times \Delta G)$-action on $V_{\mathfrak{g} \oplus \mathfrak{g}}$. Since the $G(\sigma)$-action on $(G \times G) / \Delta G$ is a Hermann action the assertion follows from \cite[Theorem 6.1]{M4} together with $\langle \tilde{\alpha} , \tilde{\xi} \rangle = \frac{1}{2} \langle \alpha, \xi \rangle$ and $\langle \tilde{\alpha} , \tilde{w} \rangle = \frac{1}{2} \langle \alpha, w \rangle$. (It can be also proven by applying Theorem \ref{thm2} to Proposition \ref{props}.)
\end{proof}

\begin{cor}\label{cor6.2}
Suppose that $\sigma^2 = \operatorname{id}$. Then the principal curvatures of $P(G, G(\sigma)) * \hat{w}$ in the direction of $\hat{\xi} \in \hat{\mathfrak{t}}$ are given by
\begin{align*}
\{0\}
& \cup
\left\{
\left.
\frac{\langle \alpha, \xi \rangle}{- \langle \alpha, w \rangle + 2 m \pi}
 \ \right| \ 
\alpha \in \Delta^+ \backslash \Delta_\xi^+
, \  
\mathfrak{g}_\alpha^+ \neq \{0\}
, \ 
\langle \alpha, w\rangle  \notin 2 \pi \mathbb{Z}, \ m \in \mathbb{Z}
\right\}
\\
& \cup
\left\{
\left.
\frac{\langle \alpha, \xi \rangle}{ - \langle \alpha, w \rangle - \pi + 2 m \pi}
 \ \right| \ 
\alpha \in \Delta^+ \backslash \Delta_\xi^+
, \  
\mathfrak{g}_\alpha^- \neq \{0\}
, \  
\langle \alpha, w\rangle + \pi \notin 2 \pi \mathbb{Z}, \ m \in \mathbb{Z}
\right\}
\\
& \cup
\left\{
\left.
\frac{\langle \alpha, \xi \rangle}{2 n \pi} 
\ \right| \ 
\alpha \in \Delta^+\backslash \Delta_\xi^+
, \ 
\mathfrak{g}_\alpha^+  \neq \{0\}
, \ 
\langle \alpha, w\rangle \in 2 \pi \mathbb{Z}
, \ 
n \in \mathbb{Z} \backslash \{0\}
\right.
\\
&
\left.
\qquad \qquad \text{or} \ \,
\alpha \in \Delta^+ \backslash \Delta_\xi^+
, \ 
\mathfrak{g}_\alpha^- \neq \{0\}
, \ 
\langle \alpha, w\rangle + \pi \in 2 \pi \mathbb{Z}
, \ 
n \in \mathbb{Z} \backslash \{0\}
\right\}.
\end{align*}
The multiplicities are respectively given by
\begin{equation*}
\infty
, \qquad
\dim \mathfrak{g}_\alpha^+
, \qquad
\dim \mathfrak{g}_\alpha^-
\qquad
\dim \mathfrak{g}_\alpha^+ + \dim \mathfrak{g}_\alpha^-.
\end{equation*}
If the orbit is principal then the term $\frac{\langle \alpha, \xi \rangle}{2 n \pi }$ vanishes.
\end{cor}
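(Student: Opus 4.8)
The plan is to derive the corollary directly from Theorem \ref{thm3} by specializing to an involutive $\sigma$. When $\sigma^2 = \operatorname{id}$, its eigenspace decomposition on $\mathfrak{g}^{\mathbb C}$ involves only the eigenvalues $\epsilon = 1$ and $\epsilon = -1$, so for each root $\alpha$ the index set $U(1)_\alpha$ reduces to a subset of $\{1, -1\}$, with $\mathfrak{g}_{\alpha, 1} = \mathfrak{g}_\alpha^+$ and $\mathfrak{g}_{\alpha, -1} = \mathfrak{g}_\alpha^-$ as already noted in the excerpt. Correspondingly $\arg\epsilon \in \{0, \pi\}$: the contribution $\epsilon = 1$ carries $\arg\epsilon = 0$ and the contribution $\epsilon = -1$ carries $\arg\epsilon = \pi$.

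The main steps are as follows. First, I would substitute $\arg\epsilon = 0$ and $\arg\epsilon = \pi$ into the three families of principal curvatures appearing in Theorem \ref{thm3}. The value $\frac{\langle\alpha,\xi\rangle}{-\langle\alpha,w\rangle - \arg\epsilon + 2m\pi}$ becomes $\frac{\langle\alpha,\xi\rangle}{-\langle\alpha,w\rangle + 2m\pi}$ for $\epsilon = 1$ (contributing when $\mathfrak{g}_\alpha^+ \neq \{0\}$ and the denominator is nonzero, i.e.\ $\langle\alpha,w\rangle \notin 2\pi\mathbb Z$) and $\frac{\langle\alpha,\xi\rangle}{-\langle\alpha,w\rangle - \pi + 2m\pi}$ for $\epsilon = -1$ (contributing when $\mathfrak{g}_\alpha^- \neq \{0\}$ and $\langle\alpha,w\rangle + \pi \notin 2\pi\mathbb Z$). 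This produces exactly the second and third braced families. Second, I would translate the condition $U(1)_\alpha^\perp \neq \emptyset$: this holds precisely when either $\epsilon = 1 \in U(1)_\alpha$ with $\langle\alpha,w\rangle \in 2\pi\mathbb Z$ (so $\mathfrak{g}_\alpha^+ \neq \{0\}$ and $\langle\alpha,w\rangle \in 2\pi\mathbb Z$) or $\epsilon = -1 \in U(1)_\alpha$ with $\langle\alpha,w\rangle + \pi \in 2\pi\mathbb Z$ (so $\mathfrak{g}_\alpha^- \neq \{0\}$ and $\langle\alpha,w\rangle + \pi \in 2\pi\mathbb Z$), which yields the "or" clause in the fourth family. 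Third, I would read off multiplicities: $m(\alpha,\epsilon) = \dim\mathfrak{g}_{\alpha,\epsilon}$ becomes $\dim\mathfrak{g}_\alpha^+$ or $\dim\mathfrak{g}_\alpha^-$ according to $\epsilon = 1$ or $\epsilon = -1$, and $\sum_{\epsilon \in U(1)_\alpha^\perp} m(\alpha,\epsilon)$ becomes $\dim\mathfrak{g}_\alpha^+ + \dim\mathfrak{g}_\alpha^-$ when both $\epsilon$-contributions sit in $U(1)_\alpha^\perp$ (or just one of the summands when only one does). Finally, the statement that the $\frac{\langle\alpha,\xi\rangle}{2n\pi}$ term vanishes for principal orbits is inherited verbatim from Theorem \ref{thm3}.

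The only genuinely delicate point is bookkeeping: one must be careful that the two values $\epsilon = 1$ and $\epsilon = -1$ can occur simultaneously for the same root $\alpha$, and that a given root may contribute to several of the braced families at once (e.g.\ to the "$\mathfrak{g}_\alpha^+$" part of one family and the "$\mathfrak{g}_\alpha^-$" part of another), so no contribution is double-counted or dropped when $U(1)_\alpha$ has one or two elements. Since $\langle\alpha,w\rangle$ and $\langle\alpha,w\rangle + \pi$ cannot both lie in $2\pi\mathbb Z$, the "top" and "perp" conditions partition the relevant $\epsilon$'s cleanly for each sign, so the four families are correctly disjoint in the way the statement presents them. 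I expect this combinatorial matching to be the bulk of the (short) argument; there is no new geometry beyond Theorem \ref{thm3}.

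\begin{proof}
Since $\sigma$ is involutive, the eigenvalues of $\sigma$ on $\mathfrak{g}^{\mathbb C}$ are $\pm 1$, so for each $\alpha \in \Delta^+$ we have $U(1)_\alpha \subset \{1, -1\}$ with $\mathfrak{g}_{\alpha,1} = \mathfrak{g}_\alpha^+$, $\mathfrak{g}_{\alpha,-1} = \mathfrak{g}_\alpha^-$, $\arg 1 = 0$ and $\arg(-1) = \pi$. Hence $\epsilon = 1$ lies in $U(1)_\alpha^\top$ exactly when $\mathfrak{g}_\alpha^+ \neq \{0\}$ and $\langle \alpha, w\rangle \notin 2\pi\mathbb Z$, and in $U(1)_\alpha^\perp$ exactly when $\mathfrak{g}_\alpha^+ \neq \{0\}$ and $\langle\alpha, w\rangle \in 2\pi\mathbb Z$; similarly $\epsilon = -1$ lies in $U(1)_\alpha^\top$ exactly when $\mathfrak{g}_\alpha^- \neq \{0\}$ and $\langle\alpha, w\rangle + \pi \notin 2\pi\mathbb Z$, and in $U(1)_\alpha^\perp$ exactly when $\mathfrak{g}_\alpha^- \neq \{0\}$ and $\langle\alpha, w\rangle + \pi \in 2\pi\mathbb Z$. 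Substituting these values of $\arg\epsilon$ into the second family in Theorem \ref{thm3} splits it, according to $\epsilon = 1$ and $\epsilon = -1$, into the two families
\begin{equation*}
\left\{ \frac{\langle\alpha,\xi\rangle}{-\langle\alpha,w\rangle + 2m\pi} \ \middle|\ \alpha \in \Delta^+\backslash\Delta_\xi^+,\ \mathfrak{g}_\alpha^+ \neq \{0\},\ \langle\alpha,w\rangle \notin 2\pi\mathbb Z,\ m \in \mathbb Z \right\}
\end{equation*}
and
\begin{equation*}
\left\{ \frac{\langle\alpha,\xi\rangle}{-\langle\alpha,w\rangle - \pi + 2m\pi} \ \middle|\ \alpha \in \Delta^+\backslash\Delta_\xi^+,\ \mathfrak{g}_\alpha^- \neq \{0\},\ \langle\alpha,w\rangle + \pi \notin 2\pi\mathbb Z,\ m \in \mathbb Z \right\},
\end{equation*}
with multiplicities $m(\alpha,1) = \dim\mathfrak{g}_\alpha^+$ and $m(\alpha,-1) = \dim\mathfrak{g}_\alpha^-$ respectively. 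Next, the condition $U(1)_\alpha^\perp \neq \emptyset$ in the third family of Theorem \ref{thm3} holds precisely when $\mathfrak{g}_\alpha^+ \neq \{0\}$ with $\langle\alpha,w\rangle \in 2\pi\mathbb Z$, or $\mathfrak{g}_\alpha^- \neq \{0\}$ with $\langle\alpha,w\rangle + \pi \in 2\pi\mathbb Z$; since $\langle\alpha,w\rangle$ and $\langle\alpha,w\rangle + \pi$ cannot both lie in $2\pi\mathbb Z$, the multiplicity $\sum_{\epsilon\in U(1)_\alpha^\perp} m(\alpha,\epsilon)$ equals $\dim\mathfrak{g}_\alpha^+$, $\dim\mathfrak{g}_\alpha^-$, or their sum according to which of the two alternatives occur. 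This gives the last family, and the statement that $\frac{\langle\alpha,\xi\rangle}{2n\pi}$ vanishes for principal orbits is exactly the corresponding statement in Theorem \ref{thm3}. Finally the eigenvalue $0$ with infinite multiplicity is unchanged. Collecting these contributions yields the asserted list of principal curvatures and multiplicities.
\end{proof}
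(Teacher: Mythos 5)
Your proposal is correct and matches the paper's (implicit) derivation: the corollary is obtained by specializing Theorem \ref{thm3} to $\epsilon \in \{1,-1\}$ with $\arg 1 = 0$, $\arg(-1)=\pi$, $\mathfrak{g}_{\alpha,1}=\mathfrak{g}_\alpha^+$ and $\mathfrak{g}_{\alpha,-1}=\mathfrak{g}_\alpha^-$, exactly as you do. Your observation that $\langle\alpha,w\rangle$ and $\langle\alpha,w\rangle+\pi$ cannot both lie in $2\pi\mathbb{Z}$, so that the multiplicity of the $\frac{\langle\alpha,\xi\rangle}{2n\pi}$ family is in fact only one of $\dim\mathfrak{g}_\alpha^+$ or $\dim\mathfrak{g}_\alpha^-$ for a given $\alpha$, is a correct and slightly more careful reading of that multiplicity than the displayed sum suggests.
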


\begin{cor}\label{corPT}
Suppose that $\sigma = \operatorname{id}$. Then the principal curvatures of $P(G, \Delta G) * \hat{w}$ in the direction of $\hat{\xi} \in \hat{\mathfrak{t}}$ are given by 
\begin{align*}
\{0\} 
& \cup
\left\{
\left.
\frac{\langle \alpha, \xi \rangle}{- \langle \alpha, w \rangle + 2 m \pi}
 \ \right| \ 
\alpha \in \Delta^+\backslash \Delta_\xi^+
, \ 
\langle \alpha, w\rangle \notin 2 \pi \mathbb{Z}, \ m \in \mathbb{Z}
\right\}
\\
& \cup
\left\{
\left.\frac{\langle \alpha, \xi \rangle}{2 n \pi} \ \right| \ 
\alpha \in \Delta^+\backslash \Delta_\xi^+
, \ 
\langle \alpha, w\rangle \in 2 \pi \mathbb{Z},  \ n \in \mathbb{Z} \backslash \{0\}
\right\}.
\end{align*}
The multiplicities are respectively given by 
\begin{equation*}
\infty
, \qquad
\dim \mathfrak{g}_\alpha 
, \qquad
\dim \mathfrak{g}_\alpha.
\end{equation*}
If the orbit is principal then the term $\frac{\langle \alpha, \xi \rangle}{2 n \pi }$ vanishes.
\end{cor}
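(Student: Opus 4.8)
The final statement is the Corollary for $\sigma = \operatorname{id}$, which is a specialization of Theorem 3.

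The plan is to derive this corollary directly as the special case $\sigma = \operatorname{id}$ of Theorem \ref{thm3}. First I would observe that when $\sigma = \operatorname{id}$, the fixed point algebra is $\mathfrak{g}^\sigma = \mathfrak{g}$, so the maximal abelian subspace $\mathfrak{t}$ of $\mathfrak{g}^\sigma$ is just an ordinary maximal abelian subalgebra (Cartan subalgebra) of $\mathfrak{g}$, and the $\sigma$-action becomes the conjugation action of $\Delta G$ on $G$.

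Next I would analyze the eigenspace decomposition of $\sigma = \operatorname{id}$. Since $\sigma$ is the identity, the only eigenvalue appearing in the decomposition \eqref{decomp1} is $\epsilon = 1$; hence $\mathfrak{g}_{\alpha, \epsilon} = \{0\}$ unless $\epsilon = 1$, in which case $\mathfrak{g}_{\alpha, 1} = \mathfrak{g}_\alpha$ and $m(\alpha, 1) = m(\alpha) = \dim \mathfrak{g}_\alpha$. Since $\arg 1 = 0$, the condition $\langle \alpha, w\rangle + \arg\epsilon \notin 2\pi\mathbb{Z}$ defining $U(1)_\alpha^\top$ reduces to $\langle \alpha, w\rangle \notin 2\pi\mathbb{Z}$, and similarly $U(1)_\alpha^\perp \neq \emptyset$ becomes $\langle\alpha, w\rangle \in 2\pi\mathbb{Z}$. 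Substituting these into the formula of Theorem \ref{thm3}: the term $\frac{\langle\alpha,\xi\rangle}{-\langle\alpha,w\rangle - \arg\epsilon + 2m\pi}$ becomes $\frac{\langle\alpha,\xi\rangle}{-\langle\alpha,w\rangle + 2m\pi}$ with the single relevant value $\epsilon = 1$, so its multiplicity is $m(\alpha,1) = \dim\mathfrak{g}_\alpha$; the term $\frac{\langle\alpha,\xi\rangle}{2n\pi}$ survives for those $\alpha$ with $\langle\alpha,w\rangle \in 2\pi\mathbb{Z}$, with multiplicity $\sum_{\epsilon\in U(1)_\alpha^\perp} m(\alpha,\epsilon) = m(\alpha,1) = \dim\mathfrak{g}_\alpha$.

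There is essentially no obstacle here; the whole content is the bookkeeping verification that the indexing sets collapse correctly when $\sigma = \operatorname{id}$. The only point requiring a line of care is to confirm, for consistency with the preceding Corollary (the involutive case), that $\mathfrak{g}_\alpha^+ = \mathfrak{g}_{\alpha,1} = \mathfrak{g}_\alpha$ and $\mathfrak{g}_\alpha^- = \{0\}$ when $\sigma = \operatorname{id}$, so that the $\mathfrak{g}_\alpha^-$-terms drop out entirely and the involutive Corollary specializes to this one as well. The final assertion that the term $\frac{\langle\alpha,\xi\rangle}{2n\pi}$ vanishes for principal orbits is inherited verbatim from Theorem \ref{thm3}, since a principal orbit has trivial normal holonomy contribution, i.e.\ $U(1)_\alpha^\perp = \emptyset$ for all $\alpha \in \Delta^+\backslash\Delta^+_\xi$ in that case.
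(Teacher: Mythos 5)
Your derivation is correct and is exactly the route the paper intends: the corollary is stated without proof as the immediate specialization of Theorem \ref{thm3} to $\sigma=\operatorname{id}$, where the only surviving eigenvalue is $\epsilon=1$, so $\mathfrak{g}_{\alpha,1}=\mathfrak{g}_\alpha$, $U(1)_\alpha^\top$ reduces to the condition $\langle\alpha,w\rangle\notin 2\pi\mathbb{Z}$, and $U(1)_\alpha^\perp\neq\emptyset$ reduces to $\langle\alpha,w\rangle\in 2\pi\mathbb{Z}$, with all multiplicities collapsing to $\dim\mathfrak{g}_\alpha$. Your bookkeeping matches this in every detail, so there is nothing to add.
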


\begin{rem}
Corollary \ref{corPT} generalizes a result by Palais and Terng in the case of principal $P(G, \Delta G)$-orbits (\cite[Section 5.8]{PT88} and \cite[p.\ 24]{Ter89}).
\end{rem}

%%%%%%%%%%%%%%
\section{The austere property} \label{ausp}
In this section we study conditions for $P(G, G(\sigma))$-orbits to be austere PF submanifolds of $V_\mathfrak{g}$. 

Let $G$ be a connected compact semisimple Lie group with a bi-invariant metric induced from an $\operatorname{Aut}(G)$-invariant inner product on $\mathfrak{g}$ and $\sigma$ be an automorphism of $G$. Consider two conditions for $w \in \mathfrak{g}$:
\begin{enumerate}
\item[(a)] The orbit $N = G(\sigma) \cdot \exp w$ is an austere submanifold of $G$.
\item[(b)] The orbit $\Phi^{-1}(N) = P(G, G(\sigma)) * \hat{w}$ is an austere PF submanifold of $V_\mathfrak{g}$.
\end{enumerate}
Take a maximal abelian subalgebra $\mathfrak{t}$ of $\mathfrak{g}^\sigma$ and denote by $\Delta = \Delta(\sigma)$ the corresponding root system of $\mathfrak{t}$. We show (Theorem II in Introduction): 
\begin{thm}\label{mainthm}\ 
\begin{enumerate}
\item If $\Delta(\sigma)$ is a reduced root system then \textup{(a)} and \textup{(b)} are equivalent. 
\item If $\sigma = \operatorname{id}$ then \textup{(a)} and \textup{(b)} are equivalent.
\item If $\sigma^2 = \operatorname{id}$ then \textup{(a)} implies \textup{(b)}.
\item If $G$ is simple then \textup{(a)} implies \textup{(b)}. 
\end{enumerate}
Here \textup{(b)} does not imply \textup{(a)} in the cases \textup{(iii)} and \textup{(iv)}. In fact, there exists a counterexample.
\end{thm}

As mentioned in the Introduction the above (i)--(iii) follow from Theorem \ref{thm1} and the previous result (\cite{M4}). Thus in this section we prove (iv). (A counterexample to the converse will be shown in Theorem \ref{prop9.2}.) To do this  we need two lemmas. The first one is well-known (\cite[p.\ 44]{LooII}, \cite[Theorem 3.9]{HPTT95}):
\begin{lem}\label{lem1}
Suppose that $G$ is simple. Then there exist $a \in G$ and a diagram  automorphism $\tau$ of $G$ which has order $1$, $2$ or $3$ such that $\sigma = \tau \circ \operatorname{Ad}(a)$.
\end{lem}

\begin{lem}\label{lem2}
Suppose that there exist an automorphism $\tau$ of $G$ and $a \in G$ such that $\sigma = \tau \circ  \operatorname{Ad}(a)$. Then 
\begin{enumerate}
\item the $G(\sigma)$-action is conjugate to the $G(\tau)$-action,

\item the $P(G, G(\sigma))$-action is conjugate to the $P(G, G (\tau))$-action.
\end{enumerate}
\end{lem}
\begin{proof}
(i) Since $G(\sigma) =  (a,e)^{-1} G(\tau) (a,e)$ it follows that the isometry $l_a : G \rightarrow G$ is equivariant via the isomorphism $\operatorname{Ad}(a,e): G(\sigma) \rightarrow G(\tau)$. This proves (i). 

(ii) From the standard arguments in the theory of linear ordinary differential equations there exists a unique $g \in P(G, G \times \{e\})$ satisfying $g(0) = a$. Since $\Psi$ is a group homomorphism it follows that the diagram
\begin{equation*}
\begin{CD}
\mathcal{G} @ > \operatorname{Ad}(g)>> \mathcal{G}
\\
@V\Psi VV @V\Psi VV
\\
G \times G @> \operatorname{Ad}(a,e)>> G \times G
\end{CD}
\end{equation*}
commutes. Since $P(G, L)$ is the inverse image of $L$ under $\Psi$ it follows that $\operatorname{Ad}(g)$ maps $P(G, G(\sigma))$ isomorphically onto $P(G, G(\tau))$. Moreover the isometry $g * : V_\mathfrak{g} \rightarrow V_\mathfrak{g}$ is equivariant via the isomorphism $\operatorname{Ad}(g): P(G, G(\sigma)) \rightarrow P(G, G(\tau))$. This proves (ii).
\end{proof}

We are now in position to prove (iv) of Theorem \textup{\ref{mainthm}}.
\begin{proof}[Proof of Theorem \textup{\ref{mainthm}} \textup{(iv)}] 
From Lemmas \ref{lem1} and \ref{lem2} we can assume without loss of generality that $\sigma$ is a diagram automorphism of $G$ and has order $1$, $2$ or $3$. If $\sigma$ has order $1$ then the  assertion follows from Theorem \textup{\ref{mainthm}} (ii). If $\sigma$ has order $2$ then the assertion follows from Theorem \textup{\ref{mainthm}} (iii). If $\sigma$ has order $3$ then  $\mathfrak{g} = \mathfrak{o}(8)$ and $\sigma$ is the so-called triality automorphism. Take a maximal abelian subalgebra $\mathfrak{t}$ of $\mathfrak{g}^\sigma = \mathfrak{g}_2$. Then the root system $\Delta$ is of type $G_2$ and the assertion follows from Theorem \textup{\ref{mainthm}} (i). 
\end{proof}

\begin{cor}
If $\sigma$ is inner then \textup{(a)} and \textup{(b)} are equivalent.
\end{cor}
\begin{proof}
Since $\sigma$ is inner there exists a maximal abelian subalgebra $\mathfrak{t}$ of $\mathfrak{g}$ which is fixed by $\sigma$, that is, $\mathfrak{t} \subset \mathfrak{g}^\sigma$. Thus the corresponding root system $\Delta$ of $\mathfrak{t}$ is reduced and the assertion follows from (i) of Theorem \ref{mainthm}. 
\end{proof}

\begin{example}\label{example1}
Ikawa \cite{Ika18} classified austere orbits of $\sigma$-actions when $\Delta$ is irreducible and $\sigma$ is involutive. Recently Kimura and Mashimo \cite{KM22} classified Cartan embeddings which are austere submanifolds when $G$ is simple. Applying Theorem \ref{mainthm} to their results we obtain many examples of $P(G, G(\sigma))$-orbits which are austere PF submanifolds of $V_\mathfrak{g}$.  
\end{example}

%%%%%%%%
\section{The weakly reflective property} \label{weakp}
In this section we extend the author's previous results concerning weakly reflective PF submanifolds in Hilbert spaces.

Recall that a submanifold $N$ of a Riemannian manifold $M$ is called \emph{weakly reflective} (\cite{IST09}) if for each normal vector $\xi$ at each $p \in N$ there exists an isometry $\nu_\xi$ of $M$ satisfying 
\begin{equation}\label{wrs}
\nu_\xi(p) = p, \quad \nu_\xi(N) = N, \quad d \nu_\xi(\xi) = - \xi.
\end{equation}
It follows that weakly reflective submanifolds are austere submanifolds. 

The author \cite{M1} extended the concept of weakly reflective submanifolds to the class of PF submanifolds in Hilbert spaces and studied the relation between the following conditions:
\begin{enumerate}
\item[(C)] $N$ is a weakly reflective submanifold of $G/K$.
\item[(D)] $\Phi_K^{-1}(N)$ is a weakly reflective PF submanifold of $V_\mathfrak{g}$.
\end{enumerate}
Here $N$ is a closed submanifold of a compact symmetric space $G/K$. He showed (\cite[Theorem 8]{M1}):
\begin{thm}[\cite{M1}]\label{wrs2}
Let $G$ be  a connected compact semisimple Lie group and $K$ a symmetric subgroup of $G$. Suppose that the bi-invariant Riemannian metric of $G$ is induced by an $\operatorname{Aut}(G)$-invariant inner product of $\mathfrak{g}$ and $G$ acts effectively on $G/K$. Then
\textup{(C)} implies \textup{(D)}.
\end{thm}

It is interesting to remark that here $G$ need not be simple and $N$ need not be an orbit of a Hermann action, unlike in the austere case \cite{M4}. Applying this theorem to examples of weakly reflective submanifolds in $G/K$ he obtained many examples of weakly reflective PF submanifolds in $V_\mathfrak{g}$. We do not know whether (D) implies (C) or not. If the symmetric space $G/K$ is irreducible (or more generally, $G/K$ is a compact isotropy irreducible Riemannian homogeneous space) we can characterize the weakly reflective PF submanifold $\Phi_K^{-1}(N)$ (\cite{M3}). 

In \cite{M1} he also studied the relation between the following conditions:
\begin{enumerate}
\item[(c)] $N$ is a weakly reflective submanifold of $G$.
\item[(d)] $\Phi^{-1}(N)$ is a weakly reflective PF submanifold of $V_\mathfrak{g}$.
\end{enumerate}
Here $N$ is a closed submanifold of a connected compact Lie group $G$. The following theorem claims that (c) implies (d) under strong conditions (\cite[Theorem 7]{M1}):
\begin{thm}[\cite{M1}]\label{wrs3}
Let $G$ be a connected compact semisimple Lie group with a bi-invariant metric. Suppose that $N = L \cdot e$ is the orbit through the identity where $L$ is a closed subgroup of $G \times G$ acting on $G$ by \eqref{action1}. Suppose also that $N$ is a weakly reflective submanifold of $G$ such that for each $\xi \in T^\perp_e N$ the isometry $\nu_\xi$ satisfying \eqref{wrs} can be taken from $\operatorname{Aut}(G)$. Then $\Phi^{-1}(N) = P(G, L) * \hat{0}$ is a weakly reflective PF submanifold of $V_\mathfrak{g}$.
\end{thm}

The following theorem greatly extends the above theorem:
\begin{thm}\label{thm5}
Let $G$ be a connected compact semisimple Lie group with a bi-invariant Riemannian metric induced by an $\operatorname{Aut}(G)$-invariant inner product of $\mathfrak{g}$. Then \textup{(c)} implies \textup{(d)}. 
\end{thm}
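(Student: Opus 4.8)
The plan is to reduce the statement to the already-established dictionary between objects over $G$ and objects over $(G\times G)/\Delta G$, and then to transport a weakly reflective structure on $N$ across the canonical isomorphism of path space. Concretely, suppose $N$ is a weakly reflective submanifold of $G$ and let $p \in \Phi^{-1}(N)$ and $\hat\xi \in T^\perp_p\Phi^{-1}(N)$. Since $\Phi$ is a Riemannian submersion, normal vectors of $\Phi^{-1}(N)$ are exactly the horizontal lifts of normal vectors of $N$; write $\Phi(p) = a \in N$ and let $\xi \in T^\perp_a N$ be the vector whose horizontal lift at $p$ is $\hat\xi$. By hypothesis (C) there is an isometry $\nu_\xi$ of $G$ with $\nu_\xi(a)=a$, $\nu_\xi(N)=N$ and $d\nu_\xi(\xi)=-\xi$. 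The first step is to produce, from $\nu_\xi$, an isometry of $V_\mathfrak{g}$ that covers it and satisfies the three weakly-reflective conditions with respect to $\hat\xi$.

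The key technical point is the lifting: I would show that every isometry $\nu$ of $G$ preserving $N$ lifts to an isometry $\hat\nu$ of $V_\mathfrak{g}$ preserving $\Phi^{-1}(N)$ and satisfying $\Phi\circ\hat\nu = \nu\circ\Phi$. For isometries coming from the $G\times G$-action \eqref{action1} this is exactly the equivariance $\Phi(g*u) = (g(0),g(1))\cdot\Phi(u)$ recorded in Section \ref{prel}, realized by choosing $g \in \mathcal{G}$ with $(g(0),g(1))$ the required pair. For a general isometry $\nu$ of $G$ one writes $\nu = L_{b}\circ\rho$ (a left translation composed with an isometry fixing $e$), or more robustly uses the structure of the isometry group of a compact Lie group with bi-invariant metric: $\mathrm{Isom}(G)_0$ is generated by left and right translations, and the full isometry group is generated by these together with automorphisms of $G$ (here one uses that the metric is a negative multiple of the Killing form, so automorphisms are isometries). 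Each generator lifts: translations lift via the gauge action as above, while an automorphism $\psi$ of $G$ lifts to $u \mapsto \psi\circ u$ on $V_\mathfrak{g} = L^2([0,1],\mathfrak{g})$, and one checks directly from the defining ODE $g_u^{-1}g_u' = u$ that $\Phi(\psi\circ u) = \psi(\Phi(u))$. Composing lifts, we obtain $\hat\nu := \widehat{\nu_\xi}$ with $\Phi\circ\hat\nu = \nu_\xi\circ\Phi$; since $\hat\nu$ is built from isometries of $V_\mathfrak{g}$ it is an isometry of $V_\mathfrak{g}$, and $\nu_\xi(N)=N$ gives $\hat\nu(\Phi^{-1}(N)) = \Phi^{-1}(\nu_\xi(N)) = \Phi^{-1}(N)$.

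It then remains to verify the pointwise conditions. We need $\hat\nu(p) = p$; this is where a little care is required, because the equation $\Phi\circ\hat\nu=\nu_\xi\circ\Phi$ together with $\nu_\xi(a)=a$ only gives $\hat\nu(p) \in \Phi^{-1}(a)$, i.e. $\hat\nu(p)$ lies in the same fiber as $p$. So I would first adjust $\hat\nu$ by composing with an element of the gauge group $P(G,\{e\}\times\{e\})$ acting simply transitively on the fiber $\Phi^{-1}(a)$ (cited in Section \ref{prel}), choosing it so that the composite isometry fixes $p$; this adjustment changes $\hat\nu$ only along the fiber, hence does not disturb $\Phi\circ\hat\nu = \nu_\xi\circ\Phi$ modulo the fiber and, more to the point, preserves $\Phi^{-1}(N)$. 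Finally, because $\hat\nu$ covers $\nu_\xi$, its differential at $p$ maps the horizontal space isomorphically onto the horizontal space at $\hat\nu(p)=p$ compatibly with $d\nu_\xi$; since $\hat\xi$ is the horizontal lift of $\xi$ and $\hat\nu$ fixes $p$, we get $d\hat\nu_p(\hat\xi)$ is the horizontal lift of $d(\nu_\xi)_a(\xi) = -\xi$, i.e. $d\hat\nu_p(\hat\xi) = -\hat\xi$. This establishes (D). The main obstacle I anticipate is the lifting lemma — specifically, handling a general (not necessarily $G\times G$- or $\mathrm{Aut}(G)$-type) isometry $\nu_\xi$ and simultaneously arranging the fixed-point condition $\hat\nu(p)=p$; once the generation statement for $\mathrm{Isom}(G)$ and the simple transitivity of the based gauge group on fibers are in hand, the rest is the formal submersion bookkeeping above. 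Alternatively, and perhaps more cleanly, one can route the whole argument through Theorem \ref{thm1}: identify $\Phi^{-1}(N)$ with $\Phi_{\Delta G}^{-1}(\varphi^{-1}(N))$ inside $V_{\mathfrak{g}\oplus\mathfrak{g}}$, note that $\varphi^{-1}(N)$ is weakly reflective in $(G\times G)/\Delta G$ since $\varphi$ is an isometry (up to the scaling $\langle d\varphi(x,-x),d\varphi(y,-y)\rangle = 2\langle(x,-x),(y,-y)\rangle$, which is harmless for the weakly reflective condition), and invoke the symmetric-space version of this lifting statement for $\Phi_K$ already developed in \cite{M1}, \cite{M3} with $G/K = (G\times G)/\Delta G$.
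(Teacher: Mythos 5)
Your closing ``alternative'' is exactly the paper's proof: set $N' := \varphi^{-1}(N)$, note that $N'$ is weakly reflective in $(G\times G)/\Delta G$ (the constant conformal factor of $\varphi$ is indeed harmless), apply the symmetric--space lifting theorem \cite[Theorem 8]{M1} to conclude that $\Phi_{\Delta G}^{-1}(N')$ is a weakly reflective PF submanifold of $V_{\mathfrak{g}\oplus\mathfrak{g}}$, and transfer back via Theorem \ref{thm1}\,(iv). If you take that route you are done, and it is the route the paper takes.

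Your primary route, however, has a genuine gap at exactly the point you flag. The claim that $\operatorname{Isom}(G)$ is generated by left/right translations and automorphisms is false: $G$ with a bi-invariant metric is a symmetric space, so the inversion $a\mapsto a^{-1}$ (and every geodesic symmetry) is an isometry, and its differential at $e$ is $-\operatorname{id}_{\mathfrak{g}}$, which is not a Lie algebra automorphism; one checks directly that $a\mapsto a^{-1}$ cannot be written as a translation composed with an automorphism unless $G$ is abelian. These are precisely the isometries Theorem \ref{thm5} is designed to cover --- the statement of Theorem II stresses that $\nu_\xi$ need not lie in $G\times G$ or $\operatorname{Aut}(G)$ --- and they do not lift in the naive way: the natural candidate covering the inversion, $u\mapsto -\operatorname{Ad}(\Phi(u))\,u(1-\cdot)$, preserves the $L^2$-norm but is nonlinear in $u$, hence by Mazur--Ulam cannot be an isometry of the Hilbert space. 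Producing lifts for such isometries is the nontrivial content of \cite[Theorem 8]{M1}, so the ``lifting lemma'' cannot be dispatched by your generator argument; the reduction through $\varphi$ and the citation is not merely cleaner, it is what makes the proof go through. (The remaining bookkeeping in your sketch --- adjusting by an element of $P(G,\{e\}\times\{e\})$, which acts simply transitively on fibers, to arrange $\hat\nu(p)=p$, and chasing horizontal lifts to get $d\hat\nu_p(\hat\xi)=-\hat\xi$ --- is fine once a lift covering $\nu_\xi$ actually exists.)
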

\begin{proof}
Set $\tilde{N} := \phi^{-1}(N)$. Since $\tilde{N}$ is weakly reflective it follows from Theorem \ref{wrs2} that $\Phi_{\Delta G}^{-1}(\tilde{N})$ is also weakly reflective. Thus by Theorem \ref{thm1} (iii) the assertion follows.
\end{proof}
\noindent
Here $G$ need not be simple and $N$ need not be an orbit of $\sigma$-action, unlike in the austere case  (Theorem \ref{mainthm}). 

\begin{example}\label{example2}
Kimura and Mashimo \cite{KM22} gave examples of Cartan embeddings which are weakly reflective submanifolds. Applying Theorem \ref{thm5} to their results we obtain $P(G, G(\sigma))$-orbits which are weakly reflective PF submanifold of $V_\mathfrak{g}$. 
\end{example}

\begin{rem}
Taketomi \cite{Tak18} introduced a generalized concept of weakly reflective submanifolds, namely arid submanifolds. The results in this section are also valid in the case of arid submanifolds (see also \cite{M2}).
\end{rem}

\section{Examples and counterexamples}\label{eg}

In this section we show concrete examples of austere PF submanifolds and weakly reflective PF submanifolds which are orbits of a $P(G, G(\sigma))$-action. In particular we show a counterexample to the converse of (iii) and (iv) of Theorem \ref{mainthm}. Throughout this section we will consider the pair 
\begin{equation*}
(G, \sigma) = (SU(2m + 1), \text{the complex conjugation}).
\end{equation*}
This is the only example of a $\sigma$-action whose root system $\Delta(\sigma)$ is non-reduced (\cite[p.\ 558]{Ika18}). 

The canonical decomposition $\mathfrak{g} = \mathfrak{g}^+ + \mathfrak{g}^-$ is given by $\mathfrak{g} = \mathfrak{su}(2m + 1)$, $\mathfrak{g}^+ = \mathfrak{g}^\sigma = \mathfrak{so}(2m + 1)$ and 
\begin{equation*}
\mathfrak{g}^- = \sqrt{-1} \{X \in \operatorname{sym}(2m+1, \mathbb{R}) \mid \operatorname{tr} X = 0\}.
\end{equation*}
We define the $\operatorname{Aut}(G)$-invariant inner product of $\mathfrak{g}$ by
\begin{equation*}
\langle X, Y\rangle = - \frac{1}{2} \operatorname{tr} (X Y)
\quad 
\text{where} \ X, Y \in \mathfrak{g}.
\end{equation*}

A maximal abelian subalgebra of $\mathfrak{g}^+$ is 
\begin{equation*}
\mathfrak{t} =
\left\{
\left.
\left[\begin{array}{cccc}
X_{1} &&&
\vspace{-1mm}
\\
&\ddots&&
\vspace{-1mm}
\\
&& X_{m}&
\vspace{-1mm}
\\
&&& 0
\end{array}
\right]
\in \mathfrak{g}^+
\ \right|\ 
\begin{array}{c}
X_{i} = 
\left[\begin{array}{cc}
0 & - x_i
\\
x_i & 0
\end{array}\right],
\medskip
\\
x_i \in \mathbb{R}
\vspace{-3mm}
\end{array}
\right\}.
\end{equation*}
For each $i = 1, \cdots, m$ we set
\vspace{-1mm}
\begin{equation*}
e_i = 
\begin{blockarray}{cccc}
	&  \text{\footnotesize $i$} & &  \vspace{-2mm} \\
	\begin{block}{[ccc]c}
	& &     &  \medskip 
	\\
	&  \ \ J \ \   & & \!\text{\footnotesize $i$}  \medskip 
	\\
	&   &  &  \\
	\end{block}
\end{blockarray}
\quad \text{where} \ \ 
J = 
\left[\begin{array}{cc}
0 & -1
\\
1 & 0
\end{array}\right].
\vspace{-2mm}
\end{equation*}
Then $\{e_i\}_{i = 1}^m$ is an orthogonal basis of $\mathfrak{t}$.  We set 
\begin{equation*}
\mathfrak{u}
=
\left\{
\left.
\left[\begin{array}{cccc}
Y_{1} &&&
\vspace{-1mm}
\\
&\ddots&&
\vspace{-1mm}
\\
&& Y_{m}&
\vspace{-1mm}
\\
&&& y
\end{array}
\right]
\in \mathfrak{g}^-
\ \right|\ 
\begin{array}{c}
Y_{i} = \sqrt{-1}
\left[\begin{array}{cc}
y_i & 0
\\
0 & y_i
\end{array}\right]
, \ 
y_i \in \mathbb{R},
\medskip
\\
y = - 2 \sqrt{-1}(y_1 + \cdots + y_m)
\vspace{-3mm}
\end{array}
\right\}.
\end{equation*}
We denote by $\mathfrak{g}_{2 e_i}$  the subspace of $\mathfrak{g}^-$ consisting of matrices%
\vspace{-1mm}
\begin{equation*}
\begin{blockarray}{cccc}
	&  \text{\footnotesize $i$} & &  \vspace{-2mm} \\
	\begin{block}{[ccc]c}
	& &     &  \medskip 
	\\
	&  \ \ P \ \   & & \!\text{\footnotesize $i$}  \medskip 
	\\
	&   &  &  \\
	\end{block}
\end{blockarray}
\quad \text{where} \ \ 
P = \sqrt{-1}
\left[\begin{array}{rr}
p & q
\\
q &-p
\end{array}\right]
, \ p, q \in \mathbb{R}
\vspace{-2mm}
\end{equation*} 
and by $\mathfrak{g}_{e_i}$ the subspace of $\mathfrak{g}$ consisting of matrices
\vspace{-1mm}
\begin{equation*}
  \begin{blockarray}{ccccl}
    \begin{block}{ccccl}
     &&\text{\footnotesize$i$}& & 
	\vspace{-1mm}
	\\
    \end{block}
    \begin{block}{[cccc]l}
	&&  &  & 
	\vspace{-1mm}
	\\
	&&  &  & 
	\\
	&&  & \bm{v} & \!\text{\footnotesize $i$}
	\\
	&&  &  & 
	\vspace{-1mm}
	\\
	&&	-{}^t\bar{\bm{v}} & &  
	\vspace{1mm}
	\\
    \end{block}
  \end{blockarray}
\quad
\begin{array}{ll}
\\
\text{where} \ \ 
\bm{v} =  
\left[
\begin{array}{c}
z \\ w
\end{array}
\right]
\in \mathbb{C}^2
\smallskip
\\
\text{lies in the $(2m+1)$-column.}
\end{array}
\vspace{-2mm}
\end{equation*}
For each $1 \leq i < j \leq m$ we denote by $\mathfrak{g}_{e_i \pm e_j}$ the subspace of $\mathfrak{g}$ consisting of matrices
\vspace{-1mm}
\begin{equation*}
\begin{blockarray}{cccccc}
	&  \text{\footnotesize $i$} & & \text{\footnotesize $j$}  & \vspace{-2mm} \\
	\begin{block}{[ccccc]c}
	& &  &  & &  
	\\
	& & & A& & 
	\!\text{\footnotesize $i$}
	\\
	& & & & &  
	\\
	& -{}^t\bar{A} & & & &  
	\!\text{\footnotesize $j$}
	\vspace{-1mm}
	\\
	&&  & & &  
	\\
	\end{block}
\end{blockarray}
\quad \text{where} \ \ 
A = 
\left[\begin{array}{cc}
\alpha & \pm\beta
\\
\beta & \mp\alpha
\end{array}\right]
, \  \alpha, \beta \in \mathbb{C}.
\vspace{-2mm}
\end{equation*}
Then we obtain the root space decomposition
\begin{equation*}
\mathfrak{g} = \mathfrak{t} + \mathfrak{u}  + \sum_{i= 1}^{m} \mathfrak{g}_{2 e_i} + \sum_{i =1}^m \mathfrak{g}_{e_i} 
+ \sum_{1 \leq i < j \leq m}\mathfrak{g}_{e_i + e_j} + \sum_{1 \leq i < j \leq m}\mathfrak{g}_{e_i - e_j}.
\end{equation*}
The root system $\Delta = \{e_i, 2 e_i\}_i \cup \{e_i \pm e_j\}_{i<j}$ is of type $BC$. The dimensions of those spaces are 
\begin{align*}
m, \ m, \ 2, \ 4, \ 4 , \ 4,
\end{align*}
respectively. This decomposition is refined as
\begin{align*}
\mathfrak{g}^+ 
& = 
\mathfrak{t} + \sum_{i = 1}^m \mathfrak{g}_{e_i}^+  + \sum_{1 \leq i < j \leq m}\mathfrak{g}_{e_i + e_j}^+ + \sum_{1 \leq i < j \leq m}\mathfrak{g}_{e_i - e_j}^+,
\\
\mathfrak{g}^-
&=
\mathfrak{g}_0^- + \sum_{i = 1}^m   \mathfrak{g}_{2 e _i} + \sum_{i = 1}^m  \mathfrak{g}_{e_i}^-  + \sum_{1 \leq i < j \leq m}\mathfrak{g}_{e_i + e_j}^- + \sum_{1 \leq i < j \leq m}\mathfrak{g}_{e_i - e_j}^-,
\end{align*}
where $\mathfrak{g}_0^- = \mathfrak{u}$. 

From now on we take $w \in \mathfrak{t}$ and show examples of orbits $G(\sigma) \cdot \exp w$ and $P(G, G(\sigma)) * \hat{w}$ which are austere or weakly reflective. Note that since the actions are hyperpolar it suffices to consider normal vectors $\{dl_{\exp w} (\xi)\}_{\xi \in \mathfrak{t}}$ and $\{\hat{\xi}\}_{\xi \in \hat{\mathfrak{t}}}$ respectively (\cite[Lemma 4.3 and p.\ 458]{IST09}, \cite[Lemma 7.2]{M4}).

\begin{prop}\label{prop9.1}
Let $G$, $\sigma$ be as above. Set 
\begin{equation*}
w := \frac{\pi}{2} \sum_{i = 1}^m e_i.
\end{equation*}
Then 
\begin{enumerate}
\item  the orbit $N = G(\sigma) \cdot \exp w$ is an austere submanifold of $G$,
\item the orbit $\Phi^{-1}(N) = P(G, G(\sigma)) * \hat{w}$ is an austere PF submanifold of $V_\mathfrak{g}$.
\end{enumerate}
\end{prop}

\begin{proof}
(i) Set $a = \exp w$. By Corollary \ref{cor5.2} we have
\begin{align*}
T_{a} N 
&= 
dl_a( \ 
\mathfrak{g}_0^- +\sum_{i = 1}^m \mathfrak{g}_{e_i}^+  + \sum_{1 \leq i < j \leq m}\mathfrak{g}_{e_i + e_j}^+  + \sum_{i = 1}^m  \mathfrak{g}_{e_i}^-  + \sum_{1 \leq i < j \leq m}\mathfrak{g}_{e_i - e_j}^-
\ ) ,
\\
T^\perp_a N 
&=
dl_a ( \ 
\mathfrak{t} 
+ 
\sum_{1 \leq i < j \leq m}\mathfrak{g}_{e_i - e_j}^+
+
\sum_{i = 1}^m   \mathfrak{g}_{2 e _i}
+ 
\sum_{1 \leq i < j \leq m}\mathfrak{g}_{e_i + e_j}^-\ ),
\end{align*}
and the principal curvatures of $N$ in the direction of $dl_a(\xi)$ are expressed as the inner product of $\xi$ and vectors
\begin{equation*}
0, \quad
- \frac{1}{2}e_i
, \quad
0
, \quad
\frac{1}{2} e_i
, \quad
0.
\end{equation*}
Since the set of these vectors are invariant under the multiplication by $(-1)$ it follows that $N$ is an austere submanifold of $G$. (Note that the proof of \cite[Theorem 2.18]{Ika11} on the austere property is not correct when the root system is of type $BC_1$.)

(ii) The assertion follows by applying Theorem \ref{mainthm} (iii) to (i). To describe the principal curvatures explicitly, we give a direct proof here. By  corollary \ref{cor6.2} the principal curvatures of $P(G, G(\sigma)) * \hat{w}$ in the direction of $\hat{\xi}$ is expressed as the inner product of $\xi$ and vectors
\begin{align*}
&
\{0\}
, \ 
\left\{\frac{1}{- \frac{\pi}{2} + 2 m \pi} e_i \right\}_{m \in \mathbb{Z}}
, \ 
\left\{\frac{1}{- \pi + 2 m \pi} (e_i + e_j) \right\}_{m \in \mathbb{Z}}
, \ 
\left\{\frac{1}{- \frac{3}{2}\pi  + 2 m \pi} e_i \right\}_{m \in \mathbb{Z}}
\\
& 
\left\{\frac{1}{- \pi + 2 m \pi}(e_i - e_j) \right\}_{m \in \mathbb{Z}}
, \ 
\left\{
\left.
\frac{1}{2n \pi} \alpha
\ \right|\ 
\alpha = e_i - e_j, \ 2e_i, \ e_i + e_j
\right\}_{n \in \mathbb{Z} \backslash \{0\}}.
\end{align*} 
Clearly the set $\{\frac{1}{2 n \pi } \alpha \}_{n \in \mathbb{Z} \backslash \{0\}}$ is austere (i.e.\ invariant under the multiplication by $(-1)$) for each $\alpha$. By the equality
\begin{equation}
\frac{1}{- \pi + 2 m \pi} (e_i \pm e_j) = (-1) \times \frac{1}{- \pi + 2 (-m+1) \pi} (e_i \pm e_j),
\end{equation}
the set $\{\frac{1}{- \pi + 2 m \pi} (e_i \pm e_j) \}_{m \in \mathbb{Z}}$ is austere. By the equality 
\begin{equation}\label{eq9.1}
\frac{1}{- \frac{\pi}{2} + 2 m \pi} e_i
=
(-1) \times \frac{1}{- \frac{3}{2}\pi + 2  (- m + 1) \pi} e_i,
\end{equation}
the set 
$
\{\frac{1}{- \pi/2 + 2 m \pi} e_i 
, 
\frac{1}{- 3\pi/2  + 2 m \pi} e_i \}_{m \in \mathbb{Z}}
$
is austere. These show that the orbit $P(G, G(\sigma)) * \hat{w}$ is an austere PF submanifold of $V_\mathfrak{g}$.
\end{proof}

The following theorem shows a counterexample to the converse of \textup{(3)} and \textup{(4)} of Theorem \textup{\ref{mainthm}}.
\begin{thm}\label{prop9.2}
Let $G$, $\sigma$ be as above. Set
\begin{equation*}
w := \frac{\pi}{4} \sum_{i =1 }^m e_i.
\end{equation*}
Then 
\begin{enumerate}
\item the orbit $N = G(\sigma) \cdot \exp w$ is a minimal submanifold of $G$, but not an austere submanifold of $G$,
\item the orbit $\Phi^{-1}(N) = P(G, G(\sigma)) * \hat{w}$ is an austere PF submanifold of $V_\mathfrak{g}$.
\end{enumerate}
\end{thm}

\begin{rem}
This counterexample is different from the one given in \cite[Section 9]{M4}. In fact the previous one is not an orbit of a $\sigma$-action. 
The symmetric triads corresponding to those actions are of the same type (II-BC, see \cite{Ika11, Ika18}). However their multiplicities are different.
\end{rem}

\begin{proof}[Proof of Theorem \textup{\ref{prop9.2}}]
(i) Set $a = \exp w$. By Corollary \ref{cor5.2}  we have
\begin{align*}
T_{a} N 
&= 
dl_a( \ 
\mathfrak{g}_0^- +\sum_{i = 1}^m \mathfrak{g}_{e_i}^+  + \sum_{1 \leq i < j \leq m}\mathfrak{g}_{e_i + e_j}^+ 
\\
&  \qquad \quad +
\sum_{i = 1}^m   \mathfrak{g}_{2 e _i} + \sum_{i = 1}^m  \mathfrak{g}_{e_i}^-  + \sum_{1 \leq i < j \leq m}\mathfrak{g}_{e_i + e_j}^- + \sum_{1 \leq i < j \leq m}\mathfrak{g}_{e_i - e_j}^-,
\ ) 
\\
T^\perp_a N 
&=
dl_a ( \ 
\mathfrak{t} + \sum_{1 \leq i < j \leq m}\mathfrak{g}_{e_i - e_j}^+
\ ),
\end{align*}
and the principal curvatures of $N$ in the direction of $dl_a(\xi)$ are expressed as the inner product of $\xi$ and vectors
\begin{equation*}
0, \quad
- \frac{(\sqrt{2} + 1)}{2} e_i
, \quad
-\frac{1}{2}(e_i + e_j)
, \quad
e_i
, \quad 
\frac{(\sqrt{2} -1)}{2} e_i
, \quad
\frac{1}{2}(e_i + e_j)
, \quad
0.
\end{equation*}
Since the sum of these vectors are zero, $N$ is a minimal submanifold. However since the set of those vectors is not invariant under the multiplication by $(-1)$ it is not austere.

(ii) By Corollary \ref{cor6.2} the principal curvatures of $P(G, G(\sigma)) * \hat{w}$ in the direction of $\hat{\xi}$ are the inner product of $\xi$ and vectors
\begin{align*}
&
\{0\}
, \ 
\left\{ \frac{1}{- \frac{\pi}{4} + 2 m \pi} e_i \right\}_{m \in \mathbb{Z}}
, \ 
\left\{ \frac{1}{- \frac{\pi}{2} + 2 m \pi} (e_i + e_j) \right\}_{m \in \mathbb{Z}}
,
\\
&
\left\{ \frac{1}{- \frac{3}{2}\pi + 2 m \pi} 2 e_i\right\}_{m \in \mathbb{Z}}
, \ 
\left\{ \frac{1}{- \frac{5}{4}\pi + 2 m \pi} e_i \right\}_{m \in \mathbb{Z}}
, \ 
\left\{ \frac{1}{- \frac{3}{2}\pi + 2 m \pi} (e_i + e_j) \right\}_{m \in \mathbb{Z}}
,
\\
&
\left\{ \frac{1}{- \pi + 2 m \pi} (e_i - e_j) \right\}_{m \in \mathbb{Z}}
, \ 
\left\{  \frac{1}{2 n \pi } (e_i - e_j)\right\}_{n \in \mathbb{Z} \backslash \{0\}}.
\end{align*}
By the similar arguments as in Proposition \ref{prop9.1} the sets 
$\{\frac{1}{- \pi + 2 m \pi} (e_i - e_j) \}_{m \in \mathbb{Z}}$,  
$\{\frac{1}{2 n \pi } (e_i - e_j)\}_{n \in \mathbb{Z}}$
and 
$\{\frac{1}{- \pi/2 + 2 m \pi} (e_i + e_j), \frac{1}{- 3\pi/2 + 2 m \pi} (e_i + e_j) \}_{m \in \mathbb{Z}}$ 
are austere.  Moreover we have 
\begin{equation*}
\left\{ \frac{1}{ - \frac{3}{2}\pi + 2 m \pi} 2 e_i\right\}_{m \in \mathbb{Z}}
=
\left\{ \frac{1}{- \frac{3}{4}\pi +  2 m \pi} e_i\right\}_{m \in \mathbb{Z}}
\cup
\left\{ \frac{1}{ - \frac{7}{4}\pi +  2 m \pi} e_i\right\}_{m \in \mathbb{Z}}.
\end{equation*}
This together with the equalities
\begin{align*}
\frac{1}{- \frac{\pi}{4} + 2 m \pi} e_i
&=
(-1) \times \frac{1}{- \frac{7}{4}\pi + 2 (- m+1) \pi} e_i,
\\
\frac{1}{- \frac{5}{4}\pi + 2 m \pi} e_i
&=
(-1) \times \frac{1}{- \frac{3}{4}\pi + 2 (- m+1) \pi} e_i,
\end{align*}
shows that  the set 
$
\{
\frac{1}{- \pi/4 + 2 m \pi} e_i 
,
\frac{1}{- 5\pi/4 + 2 m \pi} e_i 
, 
\frac{1}{- 3\pi/2 + 2 m \pi} 2 e_i
\}_{m \in \mathbb{Z}}
$
is austere. These show that the orbit $P(G, G(\sigma)) * \hat{w}$ is an austere PF submanifold of $V_\mathfrak{g}$. 
\end{proof}

The following proposition shows that the austere examples given in Proposition \ref{prop9.1} are actually weakly reflective. Here (i) is base on a result by Ohno \cite[Theorem 5]{Ohno16} (see also  Kimura-Mashimo \cite[Proposition 5.2]{KM22}). 

\begin{prop}\label{prop9.3}
Let $G$, $\sigma$ be as above. Set 
\begin{equation*}
w := \frac{\pi}{2} \sum_{i = 1}^m e_i.
\end{equation*}
Then 
\begin{enumerate}
\item the orbit $N = G(\sigma) \cdot \exp w$ is a weakly reflective submanifold of $G$,
\item the orbit $\Phi^{-1}(N) = P(G, G(\sigma)) * \hat{w}$ is a weakly reflective PF submanifold of $V_\mathfrak{g}$. 
\end{enumerate}
\end{prop}
\begin{proof}
(i) It is easy to see that 
\begin{equation*}
a = \exp w = 
\left[\begin{array}{cccc}
J &&&
\vspace{-1mm}
\\
&\ddots&&
\vspace{-1mm}
\\
&& J&
\vspace{-1mm}
\\
&&& 1
\end{array}
\right]
\quad \text{where} \ \ 
J = 
\left[\begin{array}{cc}
0 & -1
\\
1 & 0
\end{array}\right].
\end{equation*}
Thus
\begin{equation*}
dl_a (\mathfrak{t})
=
\left[\begin{array}{cccc}
R_1 &&&
\vspace{-1mm}
\\
&\ddots&&
\vspace{-1mm}
\\
&& R_m&
\vspace{-1mm}
\\
&&& 0
\end{array}
\right]
\qquad
\text{where}
\ \ 
R_i = 
\left[\begin{array}{cc}
r_i & 0
\\
0 & r_i
\end{array}\right].
\end{equation*}
Define an isometry $\nu: G \rightarrow G$ by 
\begin{equation*}
\nu = (b, \sigma(b))
\quad
\text{where}
 \ \ 
b :=
\left[\begin{array}{cccc}
\sqrt{-1} L &&&
\vspace{-1mm}
\\
&\ddots&&
\vspace{-1mm}
\\
&& \sqrt{-1} L&
\vspace{-1mm}
\\
&&& 1
\end{array}
\right]
, \ \ 
L = 
\left[\begin{array}{cc}
0  &  1
\\
1  &  0
\end{array}\right].
\end{equation*}
Then $\nu \in G(\sigma)$ and  we have
\begin{equation*}
\nu (a) = a
, \quad 
\nu(G(\sigma) \cdot a ) = G(\sigma) \cdot a
\quad \text{and} \quad
d \nu (dl_a \xi) = -dl_a \xi
\end{equation*}
for any $\xi \in \mathfrak{t}$. By homogeneity it follows that $G(\sigma) \cdot \exp w $ is a weakly reflective submanifold of $G$.

(ii) The assertion follows by applying Theorem \ref{thm5} to (i). To express an isometry with respect to normal vectors explicitly we give a direct proof here. Let $q \in \mathcal{G}$ be
\begin{equation*}
q(t) =
\left[\begin{array}{cccc}
Q(t) &&&
\vspace{-1mm}
\\
&\ddots&&
\vspace{-1mm}
\\
&& Q(t)&
\vspace{-1mm}
\\
&&&1
\end{array}
\right]
\qquad
\text{where}
\ \ 
Q(t)= \sqrt{-1}
\left[\begin{array}{rr}
\sin \pi t  &  \cos \pi t
\\
\cos \pi t  &  - \sin \pi t
\end{array}\right]
\end{equation*}
and set
\begin{equation*}
\hat{\nu} := (q*).
\end{equation*}
Clearly $(q(0), q(1)) \in G(\sigma)$ and thus $q \in P(G, G(\sigma))$. Hence 
\begin{equation*}
\nu(P(G, G(\sigma)) * \hat{w}) = P(G, G(\sigma)) * \hat{w}.
\end{equation*}
Moreover it follows from matrix computations that 
\begin{equation*}
q * \hat{w} = \hat{w}
\quad
\text{and}
\quad
d (q*) \hat{\xi}  =  - \hat{\xi},
\end{equation*}
where $d (q*) = \operatorname{Ad}(q)$. Thus by homogeneity it follows that $P(G, G(\sigma)) * \hat{w}$ is a weakly reflective PF submanifold of $V_\mathfrak{g}$.
\end{proof}

%%%%%%%%
\section{Relations to affine Kac-Moody symmetric spaces}\label{AKM}
In this section we study the relation between the canonical isomorphisms defined in Section \ref{paral} and affine Kac-Moody symmetric spaces.

First we review foundations of affine Kac-Moody symmetric spaces following \cite{Hei06}.

Let $G$ be a simply connected compact simple Lie group with Lie algebra $\mathfrak{g}$ and $\sigma$ an automorphism of $G$. The differential of $\sigma$ is still denoted by $\sigma$. Denote by $\langle \cdot, \cdot \rangle$ the inner product of $\mathfrak{g}$ which is the negative of the Killing form of $\mathfrak{g}$. The loop algebra
\begin{equation*}
L(\mathfrak{g}, \sigma)
=
\{u : \mathbb{R} \rightarrow \mathfrak{g} \mid u \in C^\infty , \ u(t + 2 \pi) = \sigma(u(t)) \ \text{for all $t$}\}
\end{equation*}
is a Lie algebra with pointwise bracket.  We equip the inner product $\langle u, v \rangle_{L^2} = \int_0^{2 \pi} \langle u(t),v(t) \rangle dt$ with $L(\mathfrak{g}, \sigma)$.  Denote by $\omega_\lambda$ the cocycle defined by $\omega_\lambda(u, v) = \lambda \langle u', v \rangle_{L^2}$ for $\lambda \in \mathbb{R} \backslash \{0\}$. An \emph{affine Kac-Moody algebra} is a Lie algebra
\begin{equation*}
\hat{L}(\mathfrak{g}, \sigma) := L(\mathfrak{g}, \sigma) + \mathbb{R}c + \mathbb{R}d,
\end{equation*}
where the bracket is defined by
\begin{align*}
&
[u, v] = [u,v] + \omega_\lambda (u,v) c,
\\
&
[d, u] = u',
\\
&
[c, x] = 0,
\end{align*}
where $u, v \in L(\mathfrak{g}, \sigma)$ and $x \in \hat{L}(\mathfrak{g}, \sigma)$. It has the center $\mathbb{R} c$ and the derived algebra $\tilde{L}(\mathfrak{g}, \sigma) := L(\mathfrak{g}, \sigma) + \mathbb{R} c$. If $\sigma_1, \sigma_2 \in \operatorname{Aut}\mathfrak{g}$ are conjugate by an inner automorphism then the corresponding affine Kac-Moody algebras are isomorphic. Thus we can assume that $\sigma$ has finite order. We define the Lorentzian inner product on $\hat{L}(\mathfrak{g}, \sigma)$ by
\begin{equation*}
\langle u + \alpha c + \beta d ,v + \gamma c + \delta d\rangle = \langle u,v \rangle_{L^2} + \alpha \delta + \beta \gamma.
\end{equation*}
Clearly $c, d \perp L(\mathfrak{g}, \sigma)$, $\|c\| = \|d\| = 0$ and $\langle c, d\rangle = 1$. It follows that $\langle [x,y], z\rangle = \langle x, [y,z]\rangle$ for $x,y,z \in \hat{L}(\mathfrak{g}, \sigma)$. 

The twisted loop group
\begin{equation*}
L(G, \sigma)
=
\{g : \mathbb{R} \rightarrow G \mid g \in C^\infty , \ g (t + 2 \pi) = \sigma(g(t)) \ \text{for all $t$}\}
\end{equation*}
with pointwise multiplication is a Fr\'echet Lie group with Lie algebra $L(\mathfrak{g}, \sigma)$. The cocycle $\omega_\lambda$ defines a left-invariant closed $2$-form on $L(G, \sigma)$ and moreover defines a central extension $\tilde{L}(G, \sigma)$ of $L(G, \sigma)$ by the circle $S^1$ for discrete values of $\lambda$ (\cite{PS86}). $\tilde{L}(G, \sigma)$ has Lie algebra $\tilde{L}(\mathfrak{g}, \sigma)$. There exists a unique $\lambda_0$ such that $\tilde{L}(G, \sigma)$ is simply connected. An \emph{affine Kac-Moody group} $\hat{L}(G, \sigma)$ is a Fr\'echet Lie group defined by
\begin{equation*}
\hat{L}(G , \sigma )
:=
S^1 \ltimes  \tilde{L}(G, \sigma).
\end{equation*}
Here the $S^1$-action on $\tilde{L}(G, \sigma)$ is induced by the action on $L(G, \sigma)$ by shifting the parameter of loops. $\hat{L}(G , \sigma )$ is a $2$-torus bundle over $L(G, \sigma)$ and has Lie algebra $\hat{L}(\mathfrak{g}, \sigma)$. We equip the bi-invariant Lorentzian metric on $\hat{L}(G, \sigma)$. Then $\hat{L}(G, \sigma)$ is  a symmetric space where a reflection at the identity is given by $g \mapsto g^{-1}$.

For an involutive automorphism $\hat{\rho}$ of $\hat{G} = \hat{L}(G, \sigma)$ we consider the quotient $\hat{G}/\hat{K}$ by the fixed point subgroup $\hat{K} = \hat{G}^{\hat{\rho}}$. The differential of $\hat{\rho}$ is still denoted by $\hat{\rho}$. The Lie algebra $\hat{\mathfrak{g}} = \hat{L}(\mathfrak{g}, \sigma)$ is decomposed into the $(\pm1)$-eigenspaces $\hat{\mathfrak{g}} = \hat{\mathfrak{k}} + \hat{\mathfrak{m}}$.  Restricting the inner product on $\hat{\mathfrak{g}}$ to $\hat{\mathfrak{m}}$ we equip the $\hat{G}$-invariant metric with $\hat{G}/\hat{K}$. Then $\hat{G}/ \hat{K}$ is a symmetric space where a reflection at $e \hat{K}$ is given by $\hat{g} \hat{K} \mapsto \rho(\hat{g}) \hat{K}$.

From the structure theory of involutions of affine Kac-Moody algebras (\cite{HPTT95, Hei06, HG12}) there are essentially two kinds of involutions, namely 
\begin{enumerate}
\item[(1)] $\hat{\rho}$ satisfies $\hat{\rho}(c) = c$, $\hat{\rho}(d) = d$ and $\hat{\rho}(u)(t) = \rho(u(t))$
where $\rho \in \operatorname{Aut}\mathfrak{g}$, $\rho^2 = \operatorname{id}$ and $\sigma \rho = \rho \sigma$,
\item[(2)] $\hat{\rho}$ satisfies $\hat{\rho}(c) = - c$, $\hat{\rho}(d) = - d$, $\hat{\rho}(u)(t) = \rho(u(-t))$
where $\rho \in \operatorname{Aut}\mathfrak{g}$, $\rho^2 = \operatorname{id}$ and $\sigma \rho = \rho \sigma^{-1}$.
\end{enumerate}
We will always consider the latter one, called the involution of the \emph{second kind}, so that the extension from $L(G, \sigma)$ to $\hat{L}(G, \sigma)$ is not canceled in the quotient.

By definition an \emph{affine Kac-Moody symmetric space} is either an affine Kac-Moody group $\hat{G}$ (the group type) or the symmetric space $\hat{G}/ \hat{K}$ with respect to an involution $\hat{\rho}$ of the second kind. Note that $\hat{G}$ can be written as the quotient $\widehat{G \times G} / (\widehat{G \times G})^{\hat{\rho}}$ where $\widehat{G \times G} = \hat{L}(G \times G, \sigma \times \sigma^{-1})$ is a slight generalization of an affine Kac-Moody group and $\hat{\rho}$ the involution of the second kind defined by
\begin{equation}\label{grpt}
\hat{\rho}(c) = -c, \quad \hat{\rho}(d) = -d , \quad \hat{\rho}(u, v)(t) = (v(-t), u(-t)).
\end{equation}
It was shown that $\hat{G}$ and $\hat{G}/\hat{K}$ are tame Fr\'echet manifolds, where an inverse function theorem is available (\cite{Ham82}). The unique existence theorem of the Levi-Civita connection and the conjugacy theorem of finite dimensional maximal flats are verified for affine Kac-Moody symmetric spaces (\cite{Pop05}). The concept of duality of symmetric spaces is extended to affine Kac-Moody symmetric spaces based on the theory of complex Kac-Moody groups (\cite{Fre11}). The classification of affine Kac-Moody symmetric spaces is essentially equivalent to the classification of involutions of affine Kac-Moody algebras up to conjugation (\cite{HG12}).

Next we review their close relations to hyperpolar PF actions.

Let $\pi : \tilde{L}(G, \sigma) \rightarrow L(G, \sigma)$ denote the projection. For each $\tilde{g} \in \tilde{L}(G, \sigma)$ we write $g = \pi(\tilde{g})$. The \emph{adjoint action} of $\hat{L}(G, \sigma) = S^1 \ltimes \tilde{L}(G, \sigma)$ on $\hat{\mathfrak{g}} = \hat{L}(\mathfrak{g}, \sigma)$ is defined by (\cite{PS86})
\begin{align*}
&
\operatorname{Ad}(\tilde{g}) c  = c,
\\
& \operatorname{Ad}(\tilde{g})d  =  d - g'g^{-1} - \frac{1}{2} \| g'g^{-1}\|^2 c,
\\
&
\operatorname{Ad}(\tilde{g}) u  = gug^{-1} + \langle g' g^{-1} , gug^{-1}\rangle c
\end{align*}
for $\tilde{g} \in \tilde{L}(G, \sigma)$ and 
\begin{equation*}
\operatorname{Ad}(e^{is}) = c
, \quad
\operatorname{Ad}(e^{is}) = d
, \quad
\operatorname{Ad}(e^{is}) u  = u_s 
\end{equation*}
for $e^{is} \in S^1$. Here $u_s(t) := u(s + t)$. For the involution $\hat{\rho}$ of the second kind the canonical decomposition $\hat{\mathfrak{g}} = \hat{\mathfrak{k}} + \hat{\mathfrak{m}}$ is given by
\begin{align*}
& \hat{\mathfrak{k}}  = \{u \in L(\mathfrak{g}, \sigma) \mid \rho(u(-t)) = u(t) \},
\\
&\hat{\mathfrak{m}} = \{u + \alpha c  + \beta d \mid u \in L(\mathfrak{g}, \sigma), \ \rho(u(-t)) = - u(t), \ \alpha , \beta \in \mathbb{R} \}.
\end{align*}
The adjoint action of $\hat{G}$ on $\hat{\mathfrak{g}}$ induces the action of $\hat{K}$ on $\hat{\mathfrak{m}}$, which is called the \emph{isotropy representation} of $\hat{G}/ \hat{K}$. In the group case we define the isotropy representation of $\hat{G}$ to be the induced action of $L(G, \sigma)$ on $\hat{\mathfrak{g}}$.

Since the adjoint action preserves the inner product and the $d$-coefficient it leaves invariant the two-sheeted hyperboloid $\{x \in \hat{L}(\mathfrak{g}, \sigma) \mid \langle x,x \rangle = -1\}$, the hyperplane $\{u + \alpha c  + d\mid u \in L(\mathfrak{g}, \sigma)\}$ and hence their intersection
\begin{equation*}
\operatorname{Hor}(\hat{\mathfrak{g}})
=
\left\{d + u - \frac{\|u\|^2 + 1}{2} c \mid u \in L(\mathfrak{g} , \sigma)\right\},
\end{equation*}
which is geometrically interpreted as a horosphere of codimension $2$. For $x = d +  u - \frac{\|u\|^2 + 1}{2} c$ we have
\begin{equation*}
(e^{is}, \tilde{g}) \cdot x=  \left(d + g * u - \frac{\| g *u\|^2 + 1}{2} c \right)_s,
\end{equation*}
where $g * u = gug^{-1} - g'g^{-1}$ is the gauge transformation. Thus via the isometry
\begin{equation*}
\Gamma : L(\mathfrak{g}, \sigma) \rightarrow \operatorname{Hor}(\hat{\mathfrak{g}}), \qquad u \mapsto d + u - \frac{\|u\|^2 + 1}{2} c
\end{equation*}
$\hat{L}(G, \sigma)$ acts on $L(\mathfrak{g}, \sigma)$ by the gauge transformations.

Recall that two isometric actions of $A_1$ on $X_1$ and of $A_2$ on $X_2$ are called \emph{essentially equivalent} (\cite[p.\ 167]{PS86}) if there exist an injective homomorphism $\phi: A_1 \rightarrow A_2$ and an injective isometry $\varphi:X_1\rightarrow X_2$ which have dense images and satisfy $\varphi(a \cdot p) = \phi(a) \cdot \varphi(p)$ for $a \in A_1$ and $p \in X_1$. For $r > 0$ we set $\mathcal{G}^{r} = H^1([0, r], G)$,  $V_{\mathfrak{g}}^{r}  = H^0([0,r], \mathfrak{g})$ and 
\begin{equation*}
P(G, L)^{r} = \{g \in \mathcal{G}^r \mid (g(0), g(r)) \in L\}
\end{equation*}
 for a closed subgroup $L$ of $G \times G$. Similarly we can define the $P(G,L)^r$-action on $V_\mathfrak{g}^r$ by gauge transformations and the parallel transport map $\Phi^r: V_\mathfrak{g}^r \rightarrow G$. 

The following two propositions show the close relation between affine Kac-Moody symmetric spaces and hyperpolar PF actions (\cite[p.\ 148]{Ter95}, \cite[Proposition 4.14]{HPTT95}). In connection with the formulation of our results we give their proofs here.
\begin{prop}[Terng \cite{Ter95}]\label{prop10.1}
Let $\hat{G} = \hat{L}(G, \sigma)$ be an affine Kac-Moody symmetric space of group type. Then
the isotropy representation restricted to $\operatorname{Hor}(\hat{\mathfrak{g}})$ is essentially equivalent to the $P(G, G(\sigma))^{2 \pi}$-action on $V_\mathfrak{g}^{2 \pi}$. 
\end{prop}
\begin{proof}
The completion of $L(G, \sigma)$ with respect to the $H^1$-metric is 
\begin{align*}
&
\{g: \mathbb{R} \rightarrow G \mid g \in H^1, \ g(t + 2 \pi) = \sigma (g(t)) \ \text{for all $t$}\}
\\
\cong\ &
\{g: [0, 2 \pi] \rightarrow G \mid g \in H^1, \ g(2 \pi) = \sigma (g(0)) \}.
\end{align*}
Moreover the completion of $L(\mathfrak{g}, \sigma)$ with respect to the $H^0$-metric is 
\begin{align*}
&
\{u: \mathbb{R} \rightarrow \mathfrak{g} \mid u \in H^0, \ u(t + 2 \pi) = \sigma (u(t)) \ \text{for all $t$}\}
\\
\cong\ & 
\{u: [0, 2\pi] \rightarrow \mathfrak{g} \mid u \in H^0 \}.
\end{align*}
This proves the proposition.
\end{proof}

\begin{prop}[Heintze-Palais-Terng-Thorbergsson \cite{HPTT95}] \label{prop10.2}
Let $\hat{G}/\hat{K}$ be an affine Kac-Moody symmetric space. Then the isotropy representation restricted to $\operatorname{Hor}(\hat{\mathfrak{g}}) \cap \hat{\mathfrak{m}}$ is essentially equivalent to the $P(G, G^{\rho} \times G^{\sigma \rho})^\pi$-action on $V_\mathfrak{g}^\pi$. Here the inner product of $V_\mathfrak{g}^\pi$ is defined by $\langle u, v\rangle := 2 \int_0^{\pi} \langle u(t), v(t)\rangle dt$.
\end{prop}
\begin{proof}
The completion of $\hat{K}$ with respect to the $H^1$-metric is 
\begin{align*}
&
\{g: \mathbb{R} \rightarrow G \mid g \in H^1, \ g(t + 2 \pi) = \sigma(g(t)), \ \rho(g(-t)) = g(t) \}
\\
\cong\ & 
\{g: [0, 2 \pi] \rightarrow G \mid g \in H^1, \ g(2 \pi) = \sigma(g(0)), \ \rho(\sigma^{-1}g(2 \pi-t)) = g(t) \}
\\
\cong\ &
\{g: [0, \pi] \rightarrow G \mid g \in H^1, \ \rho(\sigma^{-1}g(0)) = \sigma(g(0)), \ \rho(\sigma^{-1}g(\pi)) = g(\pi) \}
\\
= \ &
\{g: [0, \pi] \rightarrow G \mid g \in H^1, \ \sigma^{-1} \rho \sigma^{-1} g(0) = g(0), \ \rho \sigma^{-1}(g(\pi)) = g(\pi) \}
\\
= \ &
\{g: [0, \pi] \rightarrow G \mid g \in H^1, \ \rho g(0) = g(0), \ \sigma \rho(g(\pi)) = g(\pi) \}.
\end{align*}
The completion of  $\Gamma^{-1}(\hat{\mathfrak{m}})$ with respect to the $H^0$-metric is 
\begin{align*}
&
\{u : \mathbb{R} \rightarrow \mathfrak{g} \mid u \in H^0,  \ u(t + 2 \pi) = \sigma (u(t)) , \ \rho(u(-t)) = - u(t)\}
\\
\cong \ &
\{u : [0, 2 \pi] \rightarrow \mathfrak{g} \mid u \in H^0,  \ \rho(\sigma^{-1}u(2\pi-t)) = - u(t)\}
\\
\cong \ &
\{u : [0, \pi] \rightarrow \mathfrak{g} \mid  u \in H^0\}.
\end{align*}
This proves the proposition.
\end{proof}

Finally we focus on the case of group type and show our results.
\begin{prop}\label{cor10.3}
Let $\widehat{G \times G}/ (\widehat{G \times G})^{\hat{\rho}}$ be the affine Kac-Moody symmetric space isomorphic to $\hat{G}$. Then the isotropy representation restricted to the horosphere is essentially equivalent to the $P(G \times G, G(\sigma) \times \Delta G)^{\pi}$-action on $V^{\pi}_{\mathfrak{g} \oplus \mathfrak{g}}$.
\end{prop}
\begin{proof}
Recall that the involution $\hat{\rho}$ was defined by \eqref{grpt}. We consider another involution $\hat{\tau}$ of the second kind defined by
\begin{equation}\label{grpt2}
\hat{\tau}(c) = -c, \quad \hat{\tau}(d) = -d , \quad \hat{\tau}(u, v)(t) = (\sigma^{-1} v(-t), \sigma u(-t)).
\end{equation}
Note that $\hat{\rho}$ and $\hat{\tau}$ are conjugate and thus the corresponding quotients are isomorphic. Then by the similar argument as in Proposition \ref{prop10.2} it follows that the isotropy representation of $\widehat{G \times G}/ (\widehat{G \times G})^{\hat{\tau}}$ restricted to the horosphere is essentially equivalent to the $P(G \times G, G(\sigma) \times \Delta G)^\pi$-action on $V_{\mathfrak{g} \oplus \mathfrak{g}}^\pi$. 
\end{proof}

By the same way as in Section \ref{paral} we define the canonical isomorphisms $\Omega: \mathcal{G}^{2 \pi} \rightarrow \mathcal{G}^{\pi}$ and $\Upsilon: V_{\mathfrak{g}}^{2 \pi} \rightarrow V_{\mathfrak{g} \oplus \mathfrak{g}}^{\pi}$ by
\begin{equation*}
\Omega(g) = (g(t), g(2 \pi - t))
, \quad
\Upsilon(u) = (u(t),  - u(2 \pi -t)).
\end{equation*}

\begin{cor}\label{mainprop2}
Let $\hat{G} = \hat{L}(G, \sigma)$ be an affine Kac-Moody symmetric space of group type and $\widehat{G \times G}/ (\widehat{G \times G})^{\hat{\rho}}$ the quotient isomorphic to $\hat{G}$. Then their isotropy representations restricted to the horospheres are essentially equivalent to the $P(G, G(\sigma))^{2\pi}$-action on $V_{\mathfrak{g}}^{2 \pi}$ and $P(G \times G , G(\sigma) \times \Delta G)^{\pi}$-action on $V_{\mathfrak{g} \oplus \mathfrak{g}}^{\pi}$ respectively and these are conjugate via the canonical isomorphisms $\Omega$ and $\Upsilon$.
\end{cor}
\begin{proof}
The first half of the assertion follows from Propositions \ref{prop10.1} and \ref{cor10.3}. The second half follows from Corollary \ref{cor3.1}.
\end{proof}

This corollary suggests that there is a correspondence between the isomorphism $\hat{G} \cong \widehat{G \times G}/ (\widehat{G \times G})^{\hat{\rho}}$ and the canonical isomorphisms $(\Omega, \Upsilon)$. Let us show this correspondence more explicitly. By conjugacy we can replace the involution $\hat{\rho}$ with $\hat{\tau}$. Consider the map
\begin{equation*}
\lambda: \widehat{G \times G} \rightarrow \hat{G}
\end{equation*}
whose differential is
\begin{equation*}
d\lambda : \widehat{\mathfrak{g} \oplus \mathfrak{g}} \rightarrow \hat{\mathfrak{g}}
, \quad
(u(t),v(t)) + \alpha c + \beta d \mapsto (u(t) - \sigma^{-1} v(-t) ) + \alpha c + \beta d.
\end{equation*}
The inverse image $\lambda^{-1}(\hat{e})$ of the identity $\hat{e}$ is $(\widehat{G \times G})^{\hat{\tau}}$. Thus it induces the isomorphism
\begin{equation*}
\Lambda: \widehat{G \times G} / (\widehat{G \times G})^{\hat{\tau}} \rightarrow \hat{G}.
\end{equation*}
There is an isomorphism between the isotropy subgroups
\begin{equation*}
\varphi : L(G, \sigma) \rightarrow (\widehat{G \times G})^{\hat{\tau}}
, \quad
g(t) \mapsto (g(t), \sigma (g(-t))).
\end{equation*}
The canonical decomposition $\widehat{\mathfrak{g} \oplus \mathfrak{g}} = \hat{\mathfrak{k}} + \hat{\mathfrak{m}}$ with respect to $\hat{\tau}$ is given by 
\begin{align*}
\hat{\mathfrak{k}} & = \{(u(t), \sigma(u(-t))) \mid u \in L(\mathfrak{g}, \sigma)\},
\\
\hat{\mathfrak{m}} &= \{(u(t),  - \sigma u(-t)) + \alpha c + \beta d \mid u \in L(\mathfrak{g}, \sigma), \ \alpha, \beta \in \mathbb{R}  \}.
\end{align*}
There is an isomorphism between the linear subspaces
\begin{equation*}
\psi: \hat{\mathfrak{g}} \rightarrow \hat{\mathfrak{m}}
, \quad
u(t) + \alpha c + \beta d \mapsto (u(t), - \sigma(u(-t))) + \alpha c + \beta d.
\end{equation*}
We define the inner product of $\widehat{\mathfrak{g} \oplus \mathfrak{g}}$ by 
\begin{align*}
\langle (u_1, u_2) + \alpha c + \beta d,  (v_1, v_2) + \gamma c + \delta d\rangle
&=
\frac{1}{2} (\langle u_1,  u_2\rangle_{L^2} + \langle v_1 , v_2\rangle_{L^2}) + \alpha  \delta + \beta \gamma.
\end{align*}
Then the isotropy representations of $L(G, \sigma)$ on $\hat{\mathfrak{g}}$ and of $(\widehat{G \times G})^{\hat{\tau}}$ on $\hat{\mathfrak{m}}$ are conjugate via $\varphi$ and $\psi$. Moreover $\psi$ induces the isometry
\begin{equation*}
\psi: \operatorname{Hor}(\hat{\mathfrak{g}}) \rightarrow \operatorname{Hor}(\widehat{\mathfrak{g} \oplus \mathfrak{g}}) \cap \hat{\mathfrak{m}},
\end{equation*}
which induces
\begin{equation*}
\psi : L(\mathfrak{g}, \sigma) \rightarrow  \Gamma^{-1}(\hat{\mathfrak{m}})
, \quad
u(t)  \mapsto (u(t), - \sigma(u(-t))).
\end{equation*}
Since $\sigma (g(- t)) = g(2 \pi - t)$ and $\sigma (u(- t)) = u (2 \pi -t)$ the diagrams
\begin{equation}\label{comm1}
\begin{CD}
L(G, \sigma) @>\varphi >> \widehat{G \times G}^{\hat{\rho}}
\\
@VVV @VVV
\\
P(G, G(\sigma))^{2 \pi} @>\Omega>> P(G \times G, G(\sigma) \times \Delta G)^{\pi} 
\end{CD}
\end{equation}
and
\begin{equation}\label{comm2}
\begin{CD}
L(\mathfrak{g}, \sigma) @>\psi>> \Gamma^{-1}(\hat{\mathfrak{m}})
\\
@VVV @VVV
\\
V_\mathfrak{g}^{2 \pi} @> \Upsilon>> V_{\mathfrak{g}\oplus \mathfrak{g}}^{\pi} 
\end{CD}
\end{equation}
are commutative, where the vertical arrows denote the injective maps with dense images given in Propositions \ref{prop10.1} and \ref{prop10.2}. 
In this way the isomorphism $\Lambda$ corresponds to $(\Omega, \Upsilon)$.

Moreover we have shown in Theorem  \ref{thm1} that the diagrams 
\begin{equation}\label{comm3}
\begin{CD}
P(G, G(\sigma))^{2 \pi} @>\Omega >> P(G \times G, G(\sigma) \times \Delta G)^\pi
\\
@V (\Psi^G)^{2 \pi} VV @V p^{G \times G} \circ (\Psi^{G \times G})^{\pi}VV 
\\
G(\sigma) @>\operatorname{id} >> G(\sigma)
\end{CD}
\end{equation}
and
\begin{equation}\label{comm4}
\begin{CD}
V_\mathfrak{g}^{2 \pi} @>\Upsilon>> V_{\mathfrak{g} \oplus \mathfrak{g}}^\pi
\\
@V\Phi^{2 \pi} VV @V \Phi_{\Delta G}^{\pi} VV
\\
G @<\phi<< (G \times G) /\Delta G\,,
\end{CD}
\end{equation}
are commutative. This shows that the isomorphisms $(\Omega, \Upsilon)$ correspond to $(\operatorname{id}, \rho)$. 

From these discussions we have: 
\begin{thm}\label{final}
There is a correspondence between:
\begin{enumerate}
\item the isomorphism $\Lambda$ between $\hat{G}$ and $\widehat{G \times G}/ (\widehat{G \times G})^{\hat{\tau}}$,
\item the conjugacy between hyperpolar PF actions of $P(G, G(\sigma))^{2 \pi}$ on $V_\mathfrak{g}^{2 \pi}$ and of $P(G \times G , G(\sigma) \times \Delta G)^{\pi}$ on $V_{\mathfrak{g} \oplus \mathfrak{g}}^{\pi}$ via $\Omega$ and $\Upsilon$ \textup{(}Corollary \textup{\ref{cor3.1}}\textup{)},
\item the conjugacy between the actions of $G(\sigma)$ on $G$ and of $G(\sigma)$ on $(G \times G) / \Delta G$ via $\operatorname{id}$ and $\phi$.
\end{enumerate}
\end{thm}

\section*{Acknowledgements}
The author would like to thank Professor Yoshihiro Ohnita for useful discussions and valuable suggestions. He is also grateful to Professors  Hiroshi Tamaru, Naoyuki Koike, Takashi Sakai and Ian McIntosh for their interests in his work and useful comments. Especially he would like to thank the Osaka Central Advanced Mathematical Institute for constant supports to his work in the excellent research environment at Osaka Metropolitan University.

\end{document}